\pgfplotsset{compat=1.15}
\tikzset{
  centered/.style = { align=center, anchor=center },
     empty/.style = { font=\sffamily\Large, centered, text width=2cm },
       box/.style = { font=\sffamily, fill=green, centered },
    result/.style = { font=\sffamily\scriptsize, fill=black!20, centered},
     arrow/.style = { very thick, color=red, ->, >=Triangle},
}
\newcommand\reallywidehat[1]{%
\savestack{\tmpbox}{\stretchto{%
  \scaleto{%
    \scalerel*[\widthof{\ensuremath{#1}}]{\kern-.6pt\bigwedge\kern-.6pt}%
    {\rule[-\textheight/2]{1ex}{\textheight}}
  }{\textheight}%
}{0.5ex}}%
\stackon[1pt]{#1}{\tmpbox}%
}
\newtheorem{Theorem}{Theorem}[section]
\newtheorem{Lemma}[Theorem]{Lemma}
\newtheorem{Corollary}[Theorem]{Corollary}
\newtheorem{Definition}[Theorem]{Definition}
\newtheorem{Remark}[Theorem]{Remark}
\newcommand{\M}{\mathbb{M}}
\newcommand{\Mh}{{\M}_h}
\newcommand{\Mhk}{{\M}_h^k}
\newcommand{\Mk}{{\M}^k}
\newcommand{\bfw}{\mathbf{w}}
\newcommand{\bfq}{\mathbf{q}}
\newcommand{\bfe}{\mathbf{e}}
\newcommand{\detJ}{J}
\newcommand{\Jacob}{{\mathbb F}}
\newcommand{\mJkm}{ \frac{\eta_h^{k-1}  }{  \eta_h^k }   }
\newcommand{\bfn}{\mathbf{n}}
\newcommand{\xx}{x_1}
\newcommand{\xy}{x_2}
\newcommand{\xref}{\widehat{x}}
\newcommand{\xrefx}{\xref_1}
\newcommand{\xrefy}{\xref_2}
\newcommand{\Xkm}{X_k^{k-1}}
\newcommand{\Xkminv}{X_{k-1}^k}
\newcommand{\eu}{e_\vu}
\newcommand{\euk}{e_\vu^k}
\newcommand{\tr}{\,{\rm tr}}
\newcommand{\rmS}{{\rm S}}
\newcommand{\rmT}{{\rm T}}
\newcommand{\bfphi}{\boldsymbol{\varphi}}
\newcommand{\hbfphi}{\widehat{\bfphi}}
\newcommand{\hphi}{\widehat{\varphi}}
\newcommand{\hq}{\widehat{q}}
\newcommand{\bftau}{\mathbb{T}}
\newcommand{\hbftau}{\widehat{\bftau}}
\newcommand{\sumj}{\sum_{j=1}^d}
\newcommand{\summ}{\sum_{k=1}^m}
\newcommand{\sumN}{\sum_{k=1}^N}
\newcommand{\Div}{{\rm div}\,}
\newcommand{\Grad}{\nabla}
\newcommand{\Divref}{\Div_{\xref}}
\newcommand{\Gradref}{\Grad_{\xref}}
\newcommand{\Laph}{ \pd_{\xx,h}^2 }
\newcommand{\pdx}{ \pd_{\xx}}
\newcommand{\Lapx}{ \pdx^2}
\newcommand{\dt}{\,{\rm d} t }
\newcommand{\dvol}{\,{\rm d}x}
\newcommand{\dvolh}{\,{\rm d}x}
\newcommand{\dvolref}{\,{\rm d}\xref}
\newcommand{\dSx}{\,{\rm d}{\scalebox{0.8}{$S$}}(x)}
\newcommand{\dSxref}{\,{\rm d}{\scalebox{0.8}{$S$}}(\xref)}
\newcommand{\ds}{\,{\rm d}\xx}
\newcommand{\Abs}[1]{ \left| #1 \right|}
\newcommand{\abs}[1]{ | #1 |}
\newcommand{\Bigabs}[1]{ \Big| #1 \Big|}
\newcommand{\biggabs}[1]{ \bigg| #1 \bigg|}
\newcommand{\norm}[1]{\left\lVert#1\right\rVert}
\newcommand{\Ov}[1]{\overline{#1}}
\newcommand{\aleq}{\stackrel{<}{\sim}}
\newcommand{\vrf}{\varrho_f}
\newcommand{\vrs}{\varrho_s}
\newcommand{\vu}{\mathbf{u}}
\newcommand{\vw}{\bfw}
\newcommand{\ve}{\bfe}
\newcommand{\er}{{\ve}_2}
\newcommand{\vn}{\mathbf{n}}
\newcommand{\hvu}{\widehat{\vu}}
\newcommand{\hvw}{\widehat{\vw}}
\newcommand{\hvv}{\widehat{\vv}}
\newcommand{\hv}{\widehat{v}}
\newcommand{\hp}{\widehat{p}}
\newcommand{\FSI}{W_\eta}
\newcommand{\hFSI}{\widehat{W}_\eta}
\newcommand{\FSIh}{W_{\eta_h}}
\newcommand{\hFSIh}{\widehat{W}_{\eta_h}}
\newcommand{\vc}[1]{{\bf #1}}
\newcommand{\vv}{\vc{v}}
\newcommand{\I}{\mathbb{I}}
\newcommand{\Id}{\I}
\newcommand{\R}{\mathbb{R}}
\definecolor{Cgrey}{rgb}{0.85,0.85,0.85}
\definecolor{Cblue}{rgb}{0.50,0.85,0.85}
\definecolor{Cred}{rgb}{1,0,0}
\definecolor{fancy}{rgb}{0.10,0.85,0.10}
\definecolor{forestgreen}{rgb}{0.13, 0.55, 0.13}
\newcommand{\vx}{{\bm x}}
\newcommand{\vxref}{\widehat{\bm x}}
\newcommand{\TS}{\tau}
\newcommand{\pd}{\partial}
\newcommand{\pdt}{\pd _t}
\newcommand{\pdtt}{\pd _t^2}
\newcommand{\mdt}{\pd^M_t}
\newcommand{\PDt}{D_t}
\newcommand{\MDt}{D^M_t}
\newcommand{\Qfh}{Q^f_h}
\newcommand{\Vfh}{V^f_h}
\newcommand{\Vsh}{V^s_h}
\newcommand{\Vshz}{V^s_{0,h}}
\newcommand{\Vfsih}{V^{fsi}_h}
\newcommand{\hQfh}{\widehat{Q}^f_h}
\newcommand{\hVfh}{\widehat{V}^f_h}
\newcommand{\hVfsih}{\widehat{V}^{fsi}_h}
\newcommand{\ALE}{\mathcal{A}_{\eta}}
\newcommand{\ALEh}{\mathcal{A}_{\eta_h}}
\newcommand{\ALEhinv}{\ALEh^{-1}}
\newcommand{\intS}[1] {\int_\Sigma #1 \ds }
\newcommand{\intD}[1] {\int_D #1 {\rm d}x}
\newcommand{\intSB}[1] {\int_\Sigma \left( #1 \right) \ds }
\newcommand{\intOfh}[1]{\int_{\Ofh} #1 \dvolh}
\newcommand{\intOfhk}[1]{\int_{\Ofh^k} #1 \dvolh}
\newcommand{\intOfhkB}[1]{\int_{\Ofh^k} \left( #1 \right)\dvolh}
\newcommand{\intOf}[1]{\int_{\Of} #1 \dvol}
\newcommand{\intOfk}[1]{\int_{\Ofk} #1 \dvolh}
\newcommand{\intO}{\intOf}
\newcommand{\intOB}[1]{\int_{\Of} \left( #1 \right)\dvol}
\newcommand{\Of}{\Omega_\eta}
\newcommand{\Ofk}{\Omega_{\eta^k}}
\newcommand{\Ofh}{\Omega_{\eta_h}}
\newcommand{\Ofhk}{\Omega_{\eta_h^k}}
\newcommand{\intOref}[1]{\int_{\Oref} #1 \dvolref}
\newcommand{\intOrefB}[1]{\int_{\Oref}\left( #1 \right)\dvolref}
\newcommand{\Oref}{{\widehat{\Omega}}}
\newcommand{\gridf}{\mathcal{T}_h}
\newcommand{\grids}{ \Sigma_h}
\newcommand{\calP}{ \mathcal{P}}
\newcommand{\calL}{ \mathcal{L}}
\newcommand{\Ecal}{{\mathcal{E}_{\eta_h}}}
\newcommand{\Bcal}{{\mathcal{B}}}
\newcommand{\PiS }{ \mathcal{P}_h^s}
\newcommand{\Pif}{{\mathcal{P}_h^f}}
\newcommand{\Pifh}{{\widehat{\mathcal{P}}_h^f}}
\newcommand{\PiF}{{\Pi_h^f}}
\newcommand{\Piq}{ \Pi_h^Q}
\newcommand{\Riesz}{{\mathcal{R}_h^s}}
\begin{document}


\title{Stability and error estimates of a linear numerical scheme approximating nonlinear fluid--structure interactions
\thanks{All authors thank for the support of the ERC-CZ Grant LL2105 CONTACT, the program GJ19-11707Y of the Czech national grant agency (GA{\v C}R) and the  Charles University Research program No. UNCE/SCI/023. S. S. and B. S. also thank the Primus research program PRIMUS/19/SCI/01.
}
}

\author{Sebastian Schwarzacher\thanks{Department of Analysis, Faculty of Mathematics and Physics, Charles University (schwarz@karlin.mff.cuni.cz, ktuma@karlin.mff.cuni.cz)}
\and Bangwei She\thanks{Academy for Multidisciplinary Studies, Capital Normal University; Institute of Mathematics of the Czech Academy of Sciences
(she@math.cas.cz).
} $^{ , \dagger}$
\and Karel T\r{u}ma$^{\dagger}$
}
\date{\today}
\maketitle

\begin{abstract}
In this paper, we propose a linear and monolithic finite element method for the approximation of an incompressible viscous fluid interacting with an elastic and deforming plate. 
We use the arbitrary Lagrangian--Eulerian (ALE) approach that works in the reference domain, meaning that no re-meshing is needed during the numerical simulation. 
For time discretization, we employ the backward Euler method. For space discretization, we respectively use P1-bubble, P1, and P1 finite elements for the approximation of the fluid velocity, pressure, and structure displacement. 
We show that our method fulfills the geometrical conservation law and dissipates the total energy on the discrete level. Moreover, we prove the (optimal) linear convergence with respect to the sizes of the time step $\TS$ and the mesh $h$. 
We present numerical experiments involving a substantially deforming fluid domain that do validate our theoretical results. A comparison with a fully implicit (thus nonlinear) scheme indicates that our semi-implicit linear scheme is faster and as accurate as the fully implicit one, at least in stable configurations.  
\medskip

\textsc{Keywords:}
fluid-structure interaction, 
Navier--Stokes equations, 
stability, 
error estimates, 
finite element method,
divergence-free projection

\textsc{MSC(2010): 35Q30, 76N99, 74F10, 65M12, 65M60 }
\end{abstract}


\tableofcontents

\section{Introduction}\label{sec:1}

Fluid--structure interaction (FSI) problems occur in many engineering applications,  from aero-elasticity to civil engineering and bio-mechanical problems, such as  the design of aircraft wings, wind turbines and heat exchangers, the response of bridges and skyscrapers to wind force,  blood flow in arteries, see \cite{Bazilevs, Bodnar, Tezduyar} among others. 

Numerical simulation of FSI problems has been largely studied and great progress has been achieved during the past decades; see, for examples, \cite{Basting,Boris1,Hundertmark,Landajuela,Richter} and references therein. 
Concerning the numerical stability analysis, we would like to mention the nice results of Luk\'{a}\v{c}ov\'{a}-Medvid'ov\'{a} et al. \cite{Lukacova}, Buka\v{c} and Muha \cite{Boris2}, Lozovskiy et al. \cite{Lozovskiy, Lozovskiy2}, Hecht and Pironneau \cite{Pironneau}, and Wang et al. \cite{Pironneau2} as examples. 
However, in terms of convergence analysis, there are certainly many more efforts to be made. To our best knowledge, only a few results are available on this topic; see Buka\v{c} and Muha \cite{Boris2}, Burman et al. \cite{Burman, Burman2}, Fern\'{a}ndez and Mullaert \cite{Fernandez}, and Seboldt and Buka\v{c} \cite{Seboldt}. In this direction, all available literature results 
are not only under the assumption that the displacement of the solid structure is infinitesimal but also based on the ignorance of the convection of the fluid motion. The main target of this paper is to show the convergence of a numerical approximation without these restrictions. 

For that reason, we study an archetypical setting of fluid-structure interaction. In our setting a one-dimensional plate is situated on the top of a two-dimensional container filled with a viscous incompressible liquid governed by the Navier--Stokes equations. The plate is governed by a hyperbolic equation driven by fluid traction. It may deform largely and therefore the Eulerian fluid domain is time-changing. This implies a severe nonlinear coupling between the structure and fluid equation. 

In order to solve the FSI problem numerically, we introduce a {\em linear}, implicit-explicit (semi-implicit), and monolithic finite element method. For time discretization, we take the backward Euler method. For space discretization, we start with the so-called arbitrary Lagrangian--Eulerian (ALE) mapping and transfer the time-dependent domain to a fixed reference grid. Then, we use an inf-sup stable finite element pair (P1-bubble/P1) on the reference domain for the fluid, and P1 elements for the structure. Our aims of the paper are to design an energy {\em stable} scheme and to show the (optimal) convergence rate of the numerical solution. 

 
The key point in the construction of the stability of our linear and semi-implicit scheme is that we keep the scheme {\em implicit with respect to the velocities}. In particular, the velocities of the solid structure and fluid are coupled implicitly in time, see also a similar construction of Lozovskiy et al. \cite{Lozovskiy}. Nevertheless, the scheme is linear as we take the fluid domain explicitly, which means it is given by the deformation of the plate of the previous time-step. Further, the convective term of the fluid is linearized in a stable manner. 

To some extent the current paper can be viewed as a numerical counterpart of Schwarzacher and Sroczinski \cite{SchSro20}, where the authors investigated the distance between a weak solution and a strong solution, while the aim of this paper is to investigate the distance between a numerical solution obtained by a finite element method  and a smooth solution. 
In order to adapt this result to a discrete numerical scheme, 
rather sophisticated analytic tools have to be invented. In particular, good projection operators have to be invented for a smooth solution. 
The challenge comes from the change of the fluid domain in time, which results in several non-trivial analytic difficulties on all levels when studying the convergence rate. Roughly, there are three different sources of errors that have to be estimated: i) the mismatch between the continuous geometry and the discrete geometry; ii) the respective different divergence-free constraints; iii) the projector of the fluid-velocity which has to fit a rather particular choice of a projector according to the structure equation. The first point is overcome by a change of variables. The second point is already very technical. For that, we introduce a Fortin operator for variable geometries in order to inherit the discrete solenoidality from the continuous one. Then the divergence-free condition destroyed by a change of variable is resolved by a Bogovskij correction recently developed by Kampschulte et al. ~\cite{KamSchSpe22}. The last point, the mismatch between the interpolation operator of the fluid at the boundary turns out to be the hardest to overcome. The reason is that the structure equation is of the fourth order in space. Hence, a discrete bi-Laplacian naturally appears. In order to gain suitable estimates for the structure part, a very particular choice of projector, the so-called Riesz projection operator, has to be used. Further, we have to solve a discrete Stokes problem in order to find a suitable projector of the fluid velocity that possesses these particular boundary values.

\begin{tcolorbox}
  The \emph{main result} of the paper is that there is a monolithic, linear, and fully-discrete scheme of the FSI problem \eqref{pde_f}--\eqref{pde_bdc}, which satisfies under suitable conditions:
\begin{itemize}
\item Total energy stability (see Theorem~\ref{Thm_Sta})
\item Linear convergence w.r.t. the space and time discretization parameters (see Theorem~\ref{theorem_conv_rate})
\end{itemize} 
The \emph{highlight} of the paper reads:
\begin{itemize}
\item The proposed semi-implicit scheme is linear; the absence of nonlinear iterations makes the approach computationally cheaper than the fully implicit scheme, yet provides equally good results.
\item We show the energy stability of the method. 
\item We show the linear convergence rate of the method with respect to the computational parameters $\TS$ (the time step) and $h$ (mesh size). Such a result has not been achieved in literature for any structure interacting with fluids described by the Navier--Stokes equations.   
\item The discrete structure displacement is defining the real-time geometry of the Eulerian fluid domain. 
\item The convergence rate is optimal as demonstrated by numerical experiments, see Section~\ref{sec:num}.
\end{itemize}
\end{tcolorbox}

\subsection{Problem formulation}
In this paper, we are interested in the interaction between an incompressible viscous fluid and a thin elastic structure, which is part of the fluid boundary. 
More precisely, we consider the motion of an incompressible and viscous fluid flow in a time-dependent domain 
\[ \Of(t)=\left\{\vx =(\xx,\xy) \in \Sigma \times (0, \eta(t,\xx)) \right\} \subset \R^2, 
\]
where $\Sigma = (0,L_1)$, $L_1$ is the length of the domain, $\eta=\eta(t,\xx)>0$ represents the height of the upper boundary $\Gamma_S(t)$ of the fluid domain $\Of$. 
For the sake of simplicity, we assume that i) the flow is periodic in the $\xx$-direction; ii) the upper boundary is formed by an elastic structure that can move in the $\xy$-direction; iii) the bottom boundary $\Gamma_D$ is a solid wall; iv) initially $\Of(0)= \Sigma \times[0,1]$.

In this paper, we shall use the ALE method and directly work on a time-independent reference domain $\Oref= \Of(0)$ instead of the time-dependent domain $\Of(t)$. To this end, we introduce an ALE mapping $\ALE$ that maps the reference domain $\Oref$ to the time dependent domain $\Of$, i.e.
\begin{equation*}
\ALE:\Oref \mapsto \Of, \quad 
(\xrefx, \xrefy) \mapsto (\xx,\xy)=\ALE(t,\vxref) 
=\left(\xrefx , \eta \xrefy \right),
\end{equation*}
see Figure~\ref{fig_ALE} for a graphical illustration of the domain and ALE mapping. 
%
\begin{figure}[h!]\centering
\vspace{-1.5cm}
\begin{tikzpicture}[scale=1.0]
\draw[->] (0,0)--(5.7,0);
\draw[->] (0,0)--(0,2.6);
\draw[very thick, green](0,2)--(0,0);
\draw[very thick, blue](0,0)--(5,0);
\draw[very thick, green](5,0)--(5,2);
\draw[very thick, red](5,2)--(0,2);
\path node at (2.5,1) {$\Oref$};
\path node at (2.5,-0.3) {\textcolor{blue}{$\Gamma_D$}};
\path node at (2.5,2.3) {\textcolor{red}{$\widehat{\Gamma}_S=\Sigma$}};
\path node at (-0.2,-0.2) {$0$};
\path node at (5.5,-0.3) {$\xrefx$};
\path node at (5,-0.3) {$L_1$};
\path node at (-0.2,2) {$1$};
\path node at (-0.32,2.5) {$\xrefy$};
\draw[very thick,->] (5.3,1)--(6.7,1);
\path node at (6,1.3) {$\ALE$};
\draw[->] (7,0)--(12.7,0);
\draw[->] (7,0)--(7,2.6);
\draw[very thick, green](7,2)--(7,0);
\draw[very thick, blue](7,0)--(12,0);
\draw[very thick, green](12,0)--(12,2);
\draw[thick, red]   (7, 2) .. controls (8.5, 0.5) and (11, 4) .. (12, 2);
\path node at (9.5,1) {$\Omega(t)$};
\path node at (9.5,-0.3) {\textcolor{blue}{$\Gamma_D$}};
\path node at (8.7,2.2) {\textcolor{red}{$\Gamma_S(t)$}};
\draw[<->,cyan ] (10.75,0)--(10.75,2.6);
\path node at (11,1) {\textcolor{cyan}{$\eta$}};
\path node at (6.8,-0.2) {$0$};
\path node at (12.5,-0.3) {$\xx$};
\path node at (12,-0.3) {$L_1$};
\path node at (6.68,2.5) {$\xy$};
\end{tikzpicture}
\vspace{-0.5cm}
\caption{Time dependent domain and the ALE mapping}\label{fig_ALE}
\end{figure}
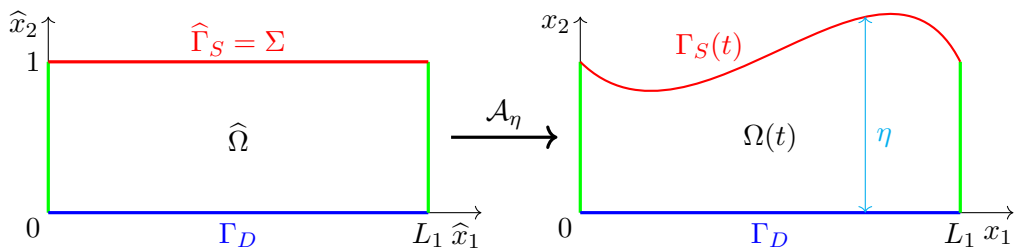

\paragraph{Fluid model.}The motion of the incompressible viscous fluid is described by the Navier--Stokes equations
\begin{equation}\label{pde_f}
\left\{ 
\begin{aligned}
\vrf \left( \pdt \vu + (\vu \cdot \Grad) \vu \right) - \Div \bftau(\vu, p) =0,  &\quad \text{ in } (0,T) \times \Of
\\
\Div \vu =0 , &\quad \text{ in } (0,T) \times \Of, 
\\
\vu =0 , &\quad \text{ on } (0,T) \times \Gamma_D, 
\end{aligned}
\right.
\end{equation}
where $\vrf$, $\vu = \vu(t, \vx)$, and $p= p(t, \vx)$ are the fluid density (given constant), velocity field, and pressure, respectively. The Cauchy stress $\bftau$ reads\footnote{We adopt the following notations: $(\Grad \vu)_{ij} = \pd_j u_i$, $\left(\vv \cdot (\Grad \vu) \right)_i =  \sumj v_j \pd_i u_j$ and  $\big(\vv \cdot \Grad \vu\big)_i =\big((\vv \cdot \Grad) \vu\big)_i = \sumj v_j \pd_j u_i $. Note that $\vv \cdot \Grad \vu \neq \vv \cdot (\Grad \vu)$ but $\vv \cdot \Grad \vu = (\vv \cdot \Grad) \vu = \Grad \vu \cdot \vv$. 
}
\[\bftau=  2\mu (\Grad \vu)^\rmS -p\I\quad \mbox{ with the constant viscosity coefficient } \mu>0, 
\]
and the superscript $S$ denotes the symmetric operator for a matrix-valued function $\mathbb A$, meaning that
${\mathbb A}^\rmS ={\mathbb A}+ {\mathbb A}^\rmT.$
\paragraph{Structure model.}
The motion of the elastic structure is given by 
\begin{equation}\label{pde_S}
\left\{ 
\begin{aligned}
\vrs \pdt \xi  + \calL (\eta) = f,  &\quad \text{ on } (0,T) \times  \Sigma, 
\\
\Lapx  \eta=0, &\quad \text{ on } (0,T) \times  \pd \Sigma, 
\end{aligned}
\right.
\end{equation}
where $\vrs>0$ is the density of the structure, $\xi=\pdt \eta$ is the velocity of the structure, $f$ is the interaction force acting on the structure due to fluid motion, and 
\[ \calL (\eta) = - \gamma_1  \Lapx \eta - \gamma_2 \Lapx \zeta -\gamma_3   \Lapx \pdt \eta,  \quad  \zeta = - \Lapx \eta, 
\]
where $\gamma_1 >0,\; \gamma_2 > 0, \; \gamma_3 \geq 0 $ are given constants. 

Further, the initial data of the problem read 
\begin{equation}\label{pde_ini}
\vu(0) =\vu_0 \text{ in } \Of(0)\quad \text{ and}  \quad \eta(0,\cdot)=\eta_0, \; \xi (0,\cdot)=\xi_0 \; \text{in}\; \Sigma.
\end{equation}
\paragraph{Coupling conditions.}
Finally, to close the system, we require coupling conditions at the fluid-structure interface, which are the so-called kinematic and dynamic boundary conditions: 
\begin{itemize}
\item the kinematic coupling condition
\begin{subequations}\label{pde_bdc}
\begin{equation}\label{pde_bdc1}
\vu(\vx) = \xi(\xx) \er ,   \quad \forall\;  \vx=(\xx,\eta) \in \Gamma_S.
\end{equation}
\item the dynamic coupling condition
\begin{equation}\label{pde_bdc2}
f= - \er \cdot \big( \detJ \bftau(\vu,p)\circ \ALE  \Jacob^{-T} \big)\cdot \er ,
\end{equation}
\end{subequations}
where 
$\Jacob=\Jacob(\eta)$ is the Jacobian of the mapping $\ALE$ and $\detJ =\detJ(\eta)$ is the corresponding determinant. In the current setting, we have 
\begin{equation}\label{Jacob}
\Jacob (\eta) =  \Gradref \ALE = 
\left(\begin{array}{cc}  1 & 0 \\ \xrefy \pdx \eta  & \eta  \end{array} \right) 
\quad \mbox{and} \quad 
\detJ (\eta)  =\det(\Jacob (\eta) ) = \eta. 
\end{equation} 
\end{itemize}

The {\bf plan of the paper} is the following. In Section~2 we discuss the weak formulation and stability of our FSI problem on the continuous level. 
In Section~3 we introduce the numerical method. In Section~4 we prove the  stability of the numerical solution on the discrete level. In Section~5 we introduce interpolation operators that are specially designed to fit both 
the divergence-free velocity field and the kinematic coupling condition. These operators are essential in Section~6, where we show the convergence rate of the numerical solution towards a strong solution. In Section~7 we present the numerical experiments. Finally, in Section~8 we give a short conclusion of the achievements in the paper.

\section{Weak formulation and stability}\label{Sec_wf}
In this section, we introduce a weak formulation of the FSI problem \eqref{pde_f}--\eqref{pde_bdc} and prove that a solution to the weak formulation is energy stable.  

To begin, we introduce the standard notations $W^{k,p}(D)$ and $L^p(D)$ on a generic domain $D$ for the Sobolev space and Lebesgue space, respectively. Further, we denote by $W^{k,p}_0$ the functions with zero traces on the boundary. 
In order to specify functions on the reference domain, we shall use the superscript `` {$\widehat{ } $} ''. For example, for a generic function $v=v(\vx)$ defined on $\Of$ we write on the reference domain that 
$\widehat{v}   =  v (\ALE(\vxref)) = v \circ \ALE.$ 
Next, we recall the Piola transformation~\cite{Ciarlet_elas} for the mapping $\ALE$:
\begin{equation}\label{Piola}
\begin{aligned}
&\dvol = \eta \dvolref, \quad 
\dSx =  \abs{\eta \Jacob^{-T} \widehat{\bfn} } \dSxref, \quad 
\bfn = \frac{\eta \Jacob^{-T} \widehat{\bfn} }{ \abs{\eta \Jacob^{-T} \widehat{\bfn} }}, \quad 
\\& \Divref(\eta \Jacob^{-1}) = {\bf 0}, \quad 
 \Div_x \bfq \circ \ALE = \frac{1}{\eta}\Divref  \left( \eta \Jacob^{-1}  \widehat{\bfq} \right) =  \Gradref\widehat{\bfq} : \Jacob^{-T}, 
\quad 
\Grad q\circ \ALE =  \Gradref  \widehat{q}  \;  \Jacob^{-1} ,
\end{aligned}
\end{equation}
where $\dvol$ (resp. $\dSx$) is the volume (resp. face) integral in the time-dependent domain, 
$\dvolref$ (resp. $\dSxref$) is the volume integral in the reference domain,
$\bfq$ and $q$ are generic vector-valued and scalar functions, respectively. Note that we have emphasized here the dependence of the differential operators $\Grad$ and $\Div$ with respect to $x$ and $\xref$. Hereinafter, if no confusion occurs, we shall simply write $\Grad$ (resp. $\Div$) instead of both $\Grad_x$ and $\Gradref$ (resp. $\Div_x$ and $\Divref$). 

Now, we define a new velocity field $\vw$ that describes the change of the fluid domain (ALE mapping) in time. It reads
\begin{equation}\label{w_con}
\hvw(\vxref):= \pdt \ALE =  \left(0 , \pdt \eta\, \xrefy \right) \quad \mbox{and}
\quad 
\vw(\vx) :=\hvw(\vxref) \circ \ALE^{-1}(\vx) = \left(0 , \xy\, \pdt \eta /\eta \right).
\end{equation}
Then, it is easy to observe the so-called Euler expansion
\begin{equation}\label{euler}
\Div \vw = \pdt \eta / \eta.
\end{equation}
According to the chain rule, we have
\begin{equation}\label{chain}
\begin{aligned}
&\pdt \widehat{v}(\vxref) = \frac{d}{dt} v (\ALE(\vxref)) = \pdt v(\vx) + \pdt \ALE(\vxref) \cdot \Grad v(\vx) 
\\&= \pdt v(\vx) + \vw(\vx) \cdot \Grad v(\vx) =:\mdt v(\vx) ,
\end{aligned}
\end{equation}
where $\mdt$ represents a material-type time derivative. 

With the above notations, it is easy to check the Reynolds transport theory
\begin{equation}\label{RT}
\begin{aligned}
&\pdt \intO{v} = \intO{\pdt v } +\int_{\pd \Of} v \vw \cdot \vn \dSx 
\\&= \intOB{\pdt v + \Div(v \vw)} 
 = \intOB{\mdt v + v \Div \vw }
.
\end{aligned}
\end{equation} 
Further, we denote $\vv$ as the relative velocity of the fluid with respect to the fluid domain. It reads 
\[  \vv = \vu -\vw \quad \mbox{satisfying the boundary condition } \quad  \vv|_{_{\pd \Of}} =0.\]
Thanks to the above boundary condition and the incompressibility condition~\eqref{pde_f}$_2$, we observe for any differentiable test function $\bfphi$ that 
\begin{equation*}
\begin{aligned}
& \intO{ \big((\vv \cdot \Grad) \vu \big)\cdot \bfphi  } 
=\intO{ \Big( \bfphi \cdot (\Grad \vu) \cdot \vv  + \frac12 ( \underbrace{  \Div \vv +\Div \vw }_{ =\Div \vu =0}  ) \vu  \cdot \bfphi \Big) }
\\ & 
=  \intO{\Div \vw\; \frac{\vu}2  \cdot \bfphi} + \frac12 \intO{ \big( \bfphi \cdot (\Grad \vu)  -  \vu \cdot (\Grad \bfphi) \big) \cdot   \vv   } .
\end{aligned}
\end{equation*}
Accordingly, we may reformulate the time derivative and convective terms in the following way
\begin{equation}\label{form1}
\begin{aligned}
& \intO{ \big(\pdt \vu + (\vu \cdot \Grad) \vu \big) \cdot \bfphi }
= \intO{\big(\pdt \vu + (\vw  \cdot \Grad) \vu \big) \cdot \bfphi} + \intO{ \big((\vv \cdot \Grad) \vu \big)\cdot \bfphi  }
\\ & 
= \intO{ \Big( \mdt \vu  +\Div \vw \; \frac{\vu}2 \Big) \cdot \bfphi } 
+ \frac12 \intO{ \big( \bfphi \cdot (\Grad \vu)  -  \vu \cdot (\Grad \bfphi) \big) \cdot   \vv   } .
\end{aligned}
\end{equation}
Finally, we introduce the following abbreviation for the sake of simplicity
\begin{equation}\label{abbs}
a_s(\eta, \zeta, \xi, \psi) =   \intSB{ \gamma_1 \pdx  \eta  \pdx  \psi  + \gamma_2 \pdx \zeta   \pdx \psi 
 + \gamma_3  \pdx  \xi  \pdx   \psi }  \quad \mbox{with } \zeta=-\Lapx \eta.
\end{equation} 
\subsection{Weak formulation}
Before introducing the weak formulation, we introduce the space of coupled test functions to accommodate the no-slip boundary condition \eqref{pde_bdc1}.
\[
\FSI=\left\{(\bfphi,\psi)\in W^{1,2}(\Omega_{\eta}) \times L^2(\Sigma):\psi(x) \er =\bfphi(x,\eta(x)),\, \bfphi=0\text{ on }\Gamma_D\right\}.
\]
Now we are ready to present the weak formulation of the FSI problem \eqref{pde_f}--\eqref{pde_bdc}. 
\begin{Definition}[Weak formulation of the FSI problem on $\Of$]\label{Def_wf} 
Let $(p,\vu,\xi,\eta)$ be a ``solution" to the FSI problem \eqref{pde_f}--\eqref{pde_bdc}. 
We say the following formula is a weak formulation of the FSI problem.
\begin{subequations}\label{wf}
\begin{equation}\label{wf1}
\intO{    q  \Div \vu } =0  \quad \mbox{for all } \quad q \in L^2(\Of);
\end{equation}
\begin{equation}\label{wf2}
\begin{aligned}
&
\vrf \intO{\Big(\mdt \vu + \Div \vw \;  \frac{\vu}2 \Big) \cdot \bfphi } 
+ \frac12 \intO{ \big( \bfphi \cdot (\Grad \vu)  -  \vu \cdot (\Grad \bfphi) \big) \cdot   \vv   } 
\\&+  \intO{\bftau(\vu,p) : \Grad \bfphi  }
+ \vrs\intS{\pdt \xi  \psi}  + a_s(\eta,\zeta, \xi, \psi) =0,
\end{aligned}
\end{equation}
\end{subequations}
for all 
$(\bfphi, \psi) \in \FSI$, where $a_s$ is given in \eqref{abbs}, and $\zeta=-\Lapx \eta$.
\end{Definition}
 Note that \eqref{wf1} is directly obtained by $\intOf{\eqref{pde_f}_2 \times q}$ for $q\in L^2(\Of)$ and \eqref{wf2} is obtained by calculating $\intOf{\eqref{pde_f}_1 \times \bfphi}+ \intS{\eqref{pde_S}_1 \times \psi}$ for the coupled test functions $(\bfphi,\psi) \in \FSI$, where we have used the following identity due to the Piola transformation \eqref{Piola}, the fact that the test functions $\bfphi$ and $\psi$ are coupled, and the coupling condition \eqref{pde_bdc2}.
\begin{align*}
&\int_{\Gamma_S}{ \bfphi  \cdot  \bftau \cdot \vn }\dSx 
= \intS{ \bfphi\circ \ALE \cdot ( \bftau\circ \ALE) \cdot (\eta\Jacob^{-T}\er) }
\\&= \intS{ \psi  \er \cdot ( \eta \bftau\circ \ALE \Jacob^{-T}) \cdot \er    }
= -\intS{f \psi}.
\end{align*} 

\subsection{Weak formulation on the reference domain}
By means of a change of variables, we may reformulate the weak formulation \eqref{wf} from the current domain $\Of$ onto the reference domain $\Oref$. 
\begin{Lemma}[Weak formulation of the FSI problem on $\Oref$]\label{Lwfref} 
Let $(p,\vu,\xi,\eta)$ satisfy the weak formulation \eqref{wf} of the FSI problem on $\Of$ with the test functions $(q,\bfphi, \psi) \in L^2(\Of) \times \FSI$. 
Let $(\hp,\hq,\hvu,\hbfphi) =  (p,q,\vu,\bfphi) \circ \ALE$.
Then there hold
\begin{subequations}\label{wf_ref}
\begin{equation}\label{wf1_ref}
\intOref{    \hq  \Grad \hvu :  \M} =0 ;
\end{equation}
\begin{equation}\label{wf2_ref}
\begin{aligned}
&\vrf \intOref{ \left( \eta \pdt \hvu  +   \pdt \eta  \frac{\hvu}2 \right)\cdot \hbfphi } 
+ \frac12 \vrf \intOref{   \big( \hbfphi \cdot (\Grad \hvu)   -  \hvu \cdot (\Grad \hbfphi)   \big) \cdot  \Jacob^{-1} \cdot \hvv \eta }
\\&
+ \intOref{   \hbftau(\hvu,\hp) : \big(\Grad \hbfphi \Jacob^{-1} \big)\eta } 
+ \vrs\intS{\pdt \xi  \psi} + a_s(\eta, \zeta, \xi, \psi)=0,
\end{aligned}
\end{equation}
\end{subequations}
where $a_s$ is given in \eqref{abbs},  $\zeta=-\Lapx \eta$, and 
\begin{equation}\label{tauref}
\hbftau(\hvu,\hp) = \bftau(\vu,p) \circ \ALE 
=  2 \mu ( \Grad \hvu \Jacob^{-1} )^\rmS -  \hp \Id, \quad 
\M:=\M(\eta):=\eta \Jacob^{-T} 
= \left(\begin{array}{cc} \eta   & -\xrefy \pdx \eta  \\ 0  & 1 \end{array} \right) 
 .
\end{equation}
\end{Lemma}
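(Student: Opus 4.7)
The goal is simply to push every term of \eqref{wf1}--\eqref{wf2} through the change of variables $\vx = \ALE(\vxref)$, using the Piola transformation identities \eqref{Piola} and the chain rule \eqref{chain}. Since $\ALE(t,\cdot):\Oref\to\Of$ is a bi-Lipschitz (in fact smooth, since $\eta>0$) diffeomorphism, standard change-of-variable rules apply, so the proof will amount to a careful term-by-term bookkeeping rather than any analytic subtlety.

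First, for \eqref{wf1_ref}, I would use $\dvol = \eta\,\dvolref$ together with the Piola identity $\Div_x \vu \circ \ALE = \Gradref \hvu : \Jacob^{-T}$. Then
\[
\intO{q \Div \vu} = \intOref{ \hq\, (\Gradref \hvu : \Jacob^{-T})\, \eta\,\dvolref } = \intOref{ \hq\, \Gradref \hvu : \M\,\dvolref},
\]
by the very definition $\M = \eta\Jacob^{-T}$ in \eqref{tauref}. This proves \eqref{wf1_ref}.

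Next, for \eqref{wf2_ref}, I would transform each of the four $\Of$-integrals separately, with $\dvol = \eta\,\dvolref$ common to all. For the time-derivative term, the chain rule \eqref{chain} gives $\mdt \vu(\vx) = \pdt \hvu(\vxref)$, so $\vrf\intO{\mdt \vu \cdot \bfphi} = \vrf\intOref{\eta\, \pdt \hvu \cdot \hbfphi\,\dvolref}$. For the $\Div\vw\,\vu/2$ term, the Euler expansion \eqref{euler} gives $\Div \vw = \pdt\eta/\eta$, so the factor $\eta$ from $\dvol$ cancels and produces the $\pdt \eta\, \hvu/2 \cdot \hbfphi$ contribution. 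For the antisymmetric convective term I would apply $(\Grad \vu)\circ \ALE = \Gradref \hvu\,\Jacob^{-1}$ (and the analogous identity for $\bfphi$), noting that the boundary condition $\vv|_{\pd \Of}=0$ guarantees $\hvv|_{\pd \Oref}=0$, so no ambiguity arises from the convective velocity transforming as a vector. For the stress term, the same gradient formula yields
\[
\bftau(\vu,p):\Grad \bfphi \circ \ALE = \hbftau(\hvu,\hp):(\Gradref \hbfphi\, \Jacob^{-1}),
\]
with $\hbftau$ given in \eqref{tauref}; multiplying by $\eta$ from the volume element produces the stated integrand.

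The structure terms $\vrs\intS{\pdt \xi\,\psi}$ and $a_s(\eta,\zeta,\xi,\psi)$ live on $\Sigma$, which is independent of $\eta$, so they pass through the change of variables unchanged and simply reappear on the right-hand side. Summing the four transformed fluid integrals together with the two structure integrals yields \eqref{wf2_ref}. I do not expect any real obstacle here; the only point to watch is the careful use of the convention $\mathbb A^{\rmS} = \mathbb A + \mathbb A^{\rmT}$ (as recorded in the footnote) when rewriting $(\Grad \vu)^\rmS : \Grad \bfphi$ in the pulled-back frame, and checking that the $\hp\,\Id$ part of $\hbftau$ contracts with $\Gradref \hbfphi\,\Jacob^{-1}$ to produce a term consistent with \eqref{wf1_ref} when $\hq = \hp$.
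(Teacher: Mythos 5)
Your proof is correct and follows essentially the same route as the paper's: a term-by-term change of variables using $\dvol=\eta\,\dvolref$, the Piola identities \eqref{Piola}, the chain rule \eqref{chain}, and the Euler expansion \eqref{euler}, with the structure integrals on $\Sigma$ passing through untouched. The only cosmetic difference is your aside about $\hvv$ — since $\hvv$ is simply $\vv\circ\ALE$ (no vector push-forward is applied), no boundary-condition argument is needed there, but this does not affect the validity of the argument.
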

\begin{proof}
First, recalling \eqref{chain} we know that
\begin{align*}
\mdt \vu \circ \ALE = \pdt \hvu ,
\end{align*}
which together with the Euler expansion \eqref{euler} yield
\[  \vrf \intO{\Big(\mdt \vu + \Div \vw \; \frac{\vu}2  \Big) \cdot \bfphi } 
=\vrf \intOref{ \left( \eta \pdt \hvu  +   \pdt \eta  \frac{\hvu}2 \right)\cdot \hbfphi } 
\]
Next, using the  Piola transformation \eqref{Piola} we have
\[
 \intO{  \bfphi \cdot (\Grad \vu) \cdot \vv  } = 
  \intOref{ \hbfphi \cdot (  \Grad \hvu \Jacob^{-1}  )   \cdot \hvv \eta }, 
\]
which indicates
\[
  \frac12 \vrf \intO{ \big( \bfphi \cdot (\Grad \vu)  -  \vu \cdot (\Grad \bfphi) \big) \cdot   \vv   } 
=
 \frac12 \vrf \intOref{    \big( \hbfphi \cdot (\Grad \hvu)   -  \hvu \cdot (\Grad \hbfphi)   \big) \cdot  \Jacob^{-1} \cdot \hvv \eta }.
\]
Analogously, we find 
\begin{align*}
 & \intO{\bftau : \Grad \bfphi  } = \intO{ \big(2 \mu( \Grad \vu)^\rmS  - p \Id \big): \Grad \bfphi  } 
\\&= \intOref{ \big( 2 \mu  ( \Grad \hvu \Jacob^{-1} )^\rmS  -\hp \Id \big) : \big(\Grad \hbfphi \Jacob^{-1} \big)\eta }
=\intOref{\hbftau: \big(\Grad \hbfphi \Jacob^{-1} \big)\eta},
\\
\text{and}\quad &
\intO{    q  \Div \vu  } 
=\intOref{    \hq  \Grad \hvu : \Jacob^{-T}  \eta } 
= \intOref{   \hq  \Grad \hvu  : \M }.
\end{align*}
Consequently, collecting the above equalities we derive \eqref{wf_ref} from \eqref{wf}, which completes the proof. 
\end{proof}

\subsection{Energy stability}
Finally, we are ready to show the stability of the FSI problem \eqref{pde_f}--\eqref{pde_bdc}. Indeed, any solution to its weak formulation \eqref{wf} (or equivalently \eqref{wf_ref}) satisfies the following energy stability.
\begin{Lemma}[Stability of the continuous problem] \hfill

Let $(p,\vu, \xi,\eta )$ be a solution to \eqref{wf} (or $(\hp,\hvu,  \xi,\eta )$ be a solution to \eqref{wf_ref}). Then the following energy estimates hold. 
\begin{equation}
\begin{aligned}
& \pdt \intO{ \frac12 \vrf |\vu|^2  }  
+   \pdt \intS{ \frac12\left( \vrs |\xi|^2 +  \gamma_1 |\pdx \eta|^2 +  \gamma_2 |\Lapx \eta|^2 \right)  } 
\\& + 2\mu \intO{ |(\Grad \vu)^\rmS |^2 }
+\intS{\gamma_3 |\pdx \xi|^2 } =0. 
\end{aligned}
\end{equation}
\end{Lemma}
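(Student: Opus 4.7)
The natural plan is to test the weak formulation with the pair $(\bfphi,\psi)=(\vu,\xi)$, which belongs to $\FSI$ precisely because the kinematic coupling \eqref{pde_bdc1} is built into the definition of that space, and to test the constraint \eqref{wf1} with $q=-p$. After this substitution, every term on the left-hand side of \eqref{wf2} should rearrange either into a total time-derivative of one of the energy densities in the statement, or into one of the non-negative dissipation integrals.

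\textbf{Step 1: the material-derivative block.} Choosing $\bfphi=\vu$ in the first parenthesis of \eqref{wf2} produces
\[
\vrf \intOB{\mdt \vu\cdot \vu + \tfrac{1}{2}|\vu|^2 \Div \vw}
= \vrf \intOB{\mdt \tfrac{|\vu|^2}{2} + \tfrac{|\vu|^2}{2}\Div\vw}
= \vrf\, \pdt \intO{\tfrac{1}{2}|\vu|^2},
\]
where the last equality is exactly the Reynolds transport identity \eqref{RT} applied with $v=|\vu|^2/2$. So the kinetic energy of the fluid appears with a clean time derivative.

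\textbf{Step 2: the skew-symmetric convective term.} With $\bfphi=\vu$ the integrand $\bfphi\cdot(\Grad\vu)-\vu\cdot(\Grad\bfphi)$ vanishes pointwise. This is the very reason for rewriting the convective term in the antisymmetric form \eqref{form1}: it absorbs the Euler-expansion correction exactly so that the skew piece cancels upon energy testing, regardless of $\vv$ and without needing $\vu$ to be divergence-free.

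\textbf{Step 3: the Cauchy stress.} Using \eqref{wf1} with $q=-p$ removes the pressure contribution. Since $(\Grad\vu)^\rmS$ is symmetric, $2(\Grad\vu)^\rmS:\Grad\vu = (\Grad\vu)^\rmS:(\Grad\vu)^\rmS = |(\Grad\vu)^\rmS|^2$, so $\intO{\bftau(\vu,p):\Grad\vu}$ collapses to the viscous dissipation $2\mu\intO{|(\Grad\vu)^\rmS|^2}$ (up to the factor dictated by the convention introduced for $\rmS$).

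\textbf{Step 4: the structure block.} Testing $\vrs\intS{\pdt\xi\,\psi}$ with $\psi=\xi$ gives $\vrs\,\pdt\intS{\tfrac12|\xi|^2}$. For $a_s(\eta,\zeta,\xi,\xi)$ with $\xi=\pdt\eta$ and $\zeta=-\Lapx\eta$, the $\gamma_1$ term is trivially $\gamma_1\pdt\intS{\tfrac12|\pdx\eta|^2}$; the $\gamma_3$ term is the dissipation $\gamma_3\intS{|\pdx\xi|^2}$; and the $\gamma_2$ term is handled via two integrations by parts in $\xx$, using periodicity in $\xx$ together with the boundary condition $\Lapx\eta=0$ on $\pd\Sigma$:
\[
\gamma_2\intS{\pdx\zeta\,\pdx\xi}
= -\gamma_2\intS{\zeta\,\pdt\Lapx\eta}
= \gamma_2\intS{\zeta\,\pdt\zeta}
= \gamma_2\,\pdt\intS{\tfrac12|\Lapx\eta|^2}.
\]
Summing the four steps and invoking $\intO{p\Div\vu}=0$ yields the asserted identity.

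The main thing to be careful about, and essentially the only non-automatic point, is the admissibility of $(\vu,\xi)$ as a test pair together with the correct boundary handling in Step 4. The space $\FSI$ is tailored so that $(\vu,\xi)$ is legal, and the boundary terms produced in the two integrations by parts all vanish thanks to the periodicity in $\xx$ and the clamped-type condition $\Lapx\eta=0$ built into \eqref{pde_S}. Once these are verified, the statement is a direct consequence of the way the weak formulation \eqref{wf2} was arranged in \eqref{form1}.
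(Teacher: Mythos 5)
Your proposal is correct and follows essentially the same route as the paper: test \eqref{wf} with $(q,\bfphi,\psi)=(\pm p,\vu,\xi)$, let the skew-symmetric convective form vanish pointwise, apply the Reynolds transport identity \eqref{RT} with $v=|\vu|^2/2$, and use $\xi=\pdt\eta$ together with one integration by parts (periodicity and $\Lapx\eta=0$ on $\pd\Sigma$) to turn the $\gamma_2$ term into $\gamma_2\,\pdt\intS{\tfrac12|\Lapx\eta|^2}$. The only cosmetic differences are the sign of the pressure test function and your explicit remark on the normalization of $(\cdot)^\rmS$, which the paper glosses over.
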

\begin{proof}
First, we derive by setting $(q, \bfphi, \psi) =(p,\vu,  \xi )$ in the weak formulation \eqref{wf} that
\begin{equation*}
\begin{aligned}
& \vrf \intOB{ \mdt \frac{|\vu|^2}{2}  + \frac{|\vu|^2}{2} \Div \vw  } 
+ 2\mu \intO{ |(\Grad \vu)^\rmS |^2 }
\\&
+ \vrs \intS{\pdt \frac{|\xi|^2}{2}} + \intSB{\gamma_1 \pdx \eta \pdx \xi +\gamma_2 \Lapx \eta \Lapx \xi + \gamma_3 |\pdx \xi|^2} =0.
\end{aligned}
\end{equation*}
Then we finish the proof after noticing $\xi =\pdt \eta$ and employing the Reynolds transport theory \eqref{RT} with $v=\frac{|\vu|^2}{2}$.  
\end{proof}

\section{Numerical method}
\label{sec:set}
In this section, we discretize the weak formulation introduced in the last section by a suitable finite element method. 

\subsection{Time discretization.}
We start with time discretization. Let $\TS$ be the time increment and  $t^k=k \TS$ for $k=0,1,\ldots, N(\equiv T/{\TS})$. Then we denote by $(p_h^k,\vu_h^k,\xi_h^k,\eta_h^k)$ the numerical approximation of the FSI problem at time $t^k$. Further, for any set of pointwise in time approximation $\{v_h^k\}_{k=0}^N$ we extend it to the whole time interval $[0, T]$ in the following way
\begin{equation}\label{tdis}
    v_h(t)=v_h^k\text{ in }[t_{k},t_{k+1}).
\end{equation}
The discrete Eulerian domain $\Ofh^k =\Ofhk$ at time $t^k$ is determined by the discrete ALE mapping: 
\begin{align*}
 \ALEh^k: \quad \Oref  \mapsto \Ofh^k, \quad \ALEh^k(\vxref) = (\xrefx,  \eta_h^{k} \xrefy ).
\end{align*}
Again $\ALEh$ is a piecewise constant in time function in the sense of \eqref{tdis}. 
For a generic function $v_h$ (including test functions) defined on $\Ofh$ we have 
$\widehat{v}_h =  v_h \circ \ALEh$ on $\Oref$.
Here we emphasize that $\widehat{\eta}_h = \eta_h$ and $\widehat{\xi}_h = \xi_h$ as their domain of definition $\Sigma$ is time independent. 

To approximate the time derivatives $\pdt v$ and $\mdt v$ we introduce 
\begin{equation}\label{time_D}
\begin{aligned}
&\PDt {v}_h^k   =\frac{{v}_h^k  -{v}_h^{k-1}  }{\TS} & \approx \quad & \pdt v ^k ,  
\\
&\MDt v_h^k  = \frac{v_h^k -v_h^{k-1}\circ \Xkm}{\TS}  & \approx \quad  &\mdt v ^k,
\end{aligned}
\end{equation} 
where $X_i^j = \ALEh^j \circ (\ALEh^i)^{-1}$ denotes the mapping from $\Ofh^i$ to $\Ofh^j$.

\subsection{Discrete Reynolds transport theory.}
Analogous to the continuous definitions \eqref{Jacob} and \eqref{w_con} and the identity \eqref{euler} we have 
  \begin{equation}\label{Jacobh}
\Jacob_h = \Jacob (\eta_h) =  \Gradref \ALEh = 
\left(\begin{array}{cc}  1 & 0 \\ \xrefy \pdx \eta_h  & \eta_h  \end{array} \right) , \quad 
 \det(\Jacob_h) = \eta_h. 
\end{equation}  
\begin{equation}\label{mesh_velo}
\begin{aligned}
 \hvw_h= \PDt \ALEh = (0, \PDt \eta_h \xrefy ) ,\; 
 \vw_h =\hvw_h \circ (\ALEh)^{-1} =\left(0, \frac{\PDt \eta_h}{\eta_h}\xy\right),
\end{aligned}
\end{equation}
and 
\begin{equation}\label{euler2}
     \Div \vw_h = \frac{ \PDt \eta_h }{ \eta_h}.
\end{equation}
Next, realizing the equality
\[ \PDt \widehat{v}_h^k = (\MDt v_h^k) \circ \ALEh^k
\]
we observe the discrete analogue of the Reynolds transport theorem \eqref{RT}, see also \cite[Lemma 1]{SS_FSI}.
\begin{Lemma}[Discrete Reynolds transport] \label{lm_DRT}
\hfill

Let $\PDt$ and $\MDt$ be given in \eqref{time_D}, then for any $k=1,\cdots,N_T$ we have
\begin{equation}\label{DRT}
\PDt \intOfhk{v_h^k } =\intOfhkB{\MDt v_h^k + \Div \vw_h^k  v_h^{k-1} \circ \Xkm} .
\end{equation}
\end{Lemma}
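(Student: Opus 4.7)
The plan is to transfer the computation to the time-independent reference domain $\Oref$, apply a discrete product rule there, and then transfer back to $\Ofh^k$. First I would invoke the Piola identity $\dvol = \eta_h^k \dvolref$ (see \eqref{Piola} and \eqref{Jacobh}) to rewrite the two volume integrals in
\[
\PDt \intOfhk{v_h^k} \;=\; \frac{1}{\TS}\!\left(\int_{\Ofh^k} v_h^k\,\dvol \;-\; \int_{\Ofh^{k-1}} v_h^{k-1}\,\dvol\right)
\]
as integrals over the fixed domain $\Oref$, obtaining
\[
\PDt \intOfhk{v_h^k} \;=\; \frac{1}{\TS}\int_{\Oref}\Bigl(\hv_h^k\,\eta_h^k - \hv_h^{k-1}\,\eta_h^{k-1}\Bigr)\dvolref.
\]

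Next I would apply the elementary discrete product rule $a^k b^k - a^{k-1}b^{k-1} = (\PDt a^k)\,b^k\cdot\TS + a^{k-1}(\PDt b^k)\cdot\TS$ to the pair $(\hv_h^k,\eta_h^k)$, which splits the integrand into two natural pieces:
\[
\int_{\Oref}\Bigl(\PDt \hv_h^k \cdot \eta_h^k \;+\; \hv_h^{k-1}\cdot \PDt \eta_h^k\Bigr)\dvolref.
\]
For the first piece, I would use the chain-rule identity $\PDt \hv_h^k = (\MDt v_h^k)\circ \ALEh^k$ already stated just before the lemma, and then change variables back via $\ALEh^k$ with Jacobian determinant $\eta_h^k$; this directly yields $\int_{\Ofh^k} \MDt v_h^k\,\dvol$. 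For the second piece, I would rewrite $\hv_h^{k-1} = v_h^{k-1}\circ\ALEh^{k-1} = (v_h^{k-1}\circ \Xkm)\circ \ALEh^k$ by inserting $(\ALEh^k)^{-1}\circ\ALEh^k$, and substitute the Euler expansion $\PDt \eta_h^k = \eta_h^k\,\Div \vw_h^k$ from \eqref{euler2}. Pulling the factor $\eta_h^k$ into the Jacobian of $\ALEh^k$ and changing variables back produces $\int_{\Ofh^k} (v_h^{k-1}\circ\Xkm)\,\Div \vw_h^k\,\dvol$, which combines with the first piece to give exactly \eqref{DRT}.

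The work is essentially bookkeeping; there is no hard analytic step because all quantities are piecewise constant in time and the mappings $\ALEh^k$ and $\Xkm$ are bijective at each fixed time. The main point to watch carefully is that $\eta_h^k$ depends only on $\xrefx$ (hence also only on $x_1$), so that the identity $\PDt \eta_h^k = \eta_h^k\,\Div \vw_h^k$ makes sense pointwise both on $\Oref$ and on $\Ofh^k$; this is what allows the substitution $\dvolref = \dvol/\eta_h^k$ to cancel cleanly against the extra $\eta_h^k$ arising from the product rule. Once this is observed, the two changes of variables fit together and the identity follows.
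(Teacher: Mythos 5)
Your proof is correct. The paper itself does not prove this lemma (it defers to \cite[Lemma 1]{SS_FSI}), but your argument --- pull both integrals back to $\Oref$ with Jacobian $\eta_h^k$, apply the discrete product rule $a^kb^k-a^{k-1}b^{k-1}=(a^k-a^{k-1})b^k+a^{k-1}(b^k-b^{k-1})$, use $\PDt\widehat{v}_h^k=(\MDt v_h^k)\circ\ALEh^k$ and the Euler expansion \eqref{euler2}, then push forward again --- is exactly the intended computation, and your remark that $\eta_h^k$ depends only on $\xrefx$ correctly justifies the clean cancellation of the Jacobian factors.
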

\begin{Remark}
    Choosing $v_h=1_{D_h}$ in Lemma \ref{lm_DRT} for any $D_h \subset \Ofh$, we have the geometric conservation law, i.e., 
    \begin{equation*}
    \PDt |D_h| =  \PDt \intOfh{1} =\intOfh{\Div \vw_h } = \int_{\pd D_h}{\vw_h \cdot \vn} \dSx.
    \end{equation*}
    Further, as we keep in our numerical scheme that $\vw_h=\vu_h$ on the boundary, where the velocity field is weakly divergence-free, we have 
    \begin{equation*}
         \PDt |\Ofh| = \int_{\pd \Ofh}{\vw_h \cdot \vn} \dSx
          = \int_{\pd \Ofh}{\vu_h \cdot \vn} \dSx
          =\intOfh{\Div \vu_h } =0.
    \end{equation*}
    Indeed, our method does fulfill the above equality, as our boundary condition is $\vu_h^{k-1} = \vw_h^k $ and we use a flat reference geometry. Therefore 
       \begin{equation*}
         \PDt |\Ofh^k| = \int_{\pd \Ofh^k}{\vw_h^k \cdot \vn} \dSx
          = \int_{\Sigma}\PDt \eta_h^k\ds
          = \int_{\Sigma}\xi_h^{k-1}\ds
          =\int_{\Ofh^{k-1}}{\Div \vu_h^{k-1} } =0,
    \end{equation*} 
    where the last equality is due to the weakly divergence-free condition \eqref{SKM1} with the choice of test function $q=1$. 
\end{Remark}

\subsection{Spatial discretization}
Let $\gridf$ be a shape regular and quasi-uniform triangulation of the reference domain $\Oref$, where $h$ stands for the maximum diameter of all elements of $\gridf$. Let $\grids$ be the surface mesh of $\gridf$ on the top boundary $\widehat{\Gamma}_S$. We denote by $K \in \gridf$ a generic element in $\gridf$ and by $\sigma \in \grids$ a generic face element in $\grids$. 
Moreover, we introduce the following function spaces on $\Oref$ 
\begin{align}\label{discrete_spaces}
& \hVfh = \left\{\hbfphi \in W^{1,2}(\Oref;\R^d) \middle|  \hbfphi \in \calP^1(K)   \oplus B_1(K)  , \; \forall K \in \gridf, \hbfphi|_{\Gamma_D}=0
\right\},
\quad
\\& B_1(K)=\left\{\phi \in \calP^3(K) \middle|  \phi(a_i)=0, \quad \text{where } a_i, \;  i=1,2,3, \ \text{are\ vertices\ of}\ K \in \gridf \right\}, 
\\&
\hQfh = \left\{\hq \in {C^0} (\Oref) \middle|  \hq \in \calP^1(K), \; \forall K \in \gridf \right\}, 
\\ & 
\Vsh= \left\{ \psi \in W^{1,2}_0(\Sigma)\middle| \psi \in \calP^1(\sigma), \; \forall \sigma \in \grids \right\}  ,
\end{align}
where $\calP^n(K)$ (resp. $\calP^n(\sigma)$) denote polynomials of degree not greater than $n$ on $K$ (resp. on $\sigma$).  
Further, we denote  
\[
\hFSIh= \left\{(\hbfphi,\psi)\in  \hVfh \times \Vsh \middle| \hbfphi(\xrefx,1)=\psi(\xrefx)\er\right\}.
\]
Finally, we denote $\hVfsih =\hQfh \times \hFSIh$ and $\Vfsih(t)=\hVfsih \circ \ALEh^{-1}(t) $. 

Let us point out that by using the linear finite element space $\Vsh$ for  the structure displacement $\eta$, we cannot directly discretize the bi-Laplacian term. Therefore, we decide to approximate the bi-Laplacian via duality, which still requires a discrete Laplace operator. To this end,  we introduce the following discrete Laplace operator for $\eta_h \in \Vsh$ by seeking $\Laph \eta_h \in \Vshz := \Vsh \cap L^2_0(\Sigma)$
such that 
\begin{equation}\label{laph}
 \intS{ \Laph \eta_h\; \psi }  +\intS{ \pdx \eta_h \pdx \psi } =0 \quad \mbox{for all } \psi \in \Vsh.
\end{equation}
Here, we would like to point out that $\Vshz$ is a finite dimensional space and in view of the assumptions on the grid the stiffness matrix of \eqref{laph} is invertible.  Thus, it admits a unique solution.

\subsection{The numerical method}
With the notations introduced above, we propose a monolithic finite element method for the discretization of the weak formulation \eqref{wf_ref}.
\paragraph{Scheme-R(\sl{A monolithic finite element method on the reference domain $\Oref$}).}
For $k=1,\dots, N$ we seek $(\hp_h^k, \hvu_h^k,\xi_h^k, \eta_h^{k+1}) \in \hVfsih  \times \Vsh $ with 
$\xi_h^k= \PDt \eta_h^{k+1}$ such that for all $(\hq,\hbfphi, \psi) \in \hVfsih$ there hold
\begin{subequations}\label{SKM_ref}
\begin{equation}\label{SKM_ref1}
\intOref{    \hq  \Grad \hvu_h^k :  \Mhk} =0 ,
\end{equation}
\begin{equation}\label{SKM_ref2}
\begin{split}
&\vrf\intOref{ \PDt \hvu_h^k \cdot \hbfphi \eta_h^{k} } 
+\frac12 \vrf \intOref{  \PDt \eta_h^k \hvu_h^{k*}  \cdot \hbfphi  } 
\\& 
+\frac12 \vrf \intOref{  \big( \hbfphi \cdot (\Grad\hvu_h^k)      -  \hvu_h^k \cdot (\Grad\hbfphi)   \big)  \cdot  (\Jacob_h^k)^{-1}  \cdot \hvv_h^{k-1} \eta_h^k } 
\\& 
+  \intOref{  \hbftau(\hvu_h^k,\hp_h^k)  : \left( \Grad \hbfphi  (\Jacob_h^k)^{-1} \right) \eta_h^k  }
\\& + \vrs  \intS{ \PDt \xi_h^k \psi} +a_s(\eta_h^{k+1}, \zeta_h^{k+1}, \xi_h^k ,\psi)
=  0,
\end{split}
\end{equation}
\end{subequations}
where $\hvu_h^{k*} =2 \hvu_h^{k-1} - \hvu_h^{k}$, $\hvv_h^{k-1}=\hvu_h^{k-1} - \hvw_h^k$, $\M_h^k = \M(\eta_h^k)$ is given in \eqref{tauref}, $a_s$ is given in \eqref{abbs}, $\zeta_h = - \Laph \eta_h$ is
 the (minus) discrete Laplace uniquely defined by \eqref{laph}, 
 $\hbftau$ is given in \eqref{tauref}, and the discrete initial data are given by  $\vu_h^0 =\Pif \vu^0$, $\xi_h^0 =\Riesz  \xi_0$, $\eta_h^0 =\Riesz  \eta_0$, and $\eta_h^1 =\eta_h^0 +\TS  \xi_h^0$.   
Here $\Pif: W^{1,2}(\Oref) \mapsto \hVfh$ is a suitable projection operator and $\Riesz: W^{1,2}(\Oref) \mapsto \Vsh$ is a Riesz projection operator to be clarified in the next section. 
\medskip

Note that \textbf{Scheme-R} approximates the FSI problem \eqref{pde_f}--\eqref{pde_bdc} based on the weak formulation~\eqref{wf_ref} in the reference domain $\Oref$. It is linear and belongs to the monolithic approach. Practically, it is more convenient to work with the reference domain as it is time-independent and no need for re-meshing. Nevertheless, many researchers appreciate working with the current domain $\Ofh$ (approximation of $\Of$). To this end, we present the following equivalent formulation of \textbf{Scheme-R} on the current domain. 
\paragraph{Scheme-C(\sl{A monolithic finite element method on the current (push-forward) domain $\Ofh$}).} 
Given the initial data \eqref{pde_ini} we set $\vu_h^0 =\Pif \vu^0$, $\xi_h^0 =\Riesz  \xi_0$, $\eta_h^0 =\Riesz  \eta_0$, and $\eta_h^1 =\eta_h^0 +\TS  \xi_h^0$.   
Then for $k=1,\dots, N$  we seek  $( p_h^k,\vu_h^k,\xi_h^k, \eta_h^{k+1}) \in \Vfsih \times \Vsh$ with 
$\xi_h^k= \PDt \eta_h^{k+1}$ such that for all $(q,\bfphi,\psi) \in \Vfsih$ there hold
\begin{subequations}
\label{SKM}
\begin{equation}\label{SKM1}
\intOfk{   q \Div \vu_h^k   } =0  ,
\end{equation}
\begin{equation}\label{SKM2}
\begin{split}
&  
 \vrf \intOfhk{ \big(\MDt \vu_h^k + \Div \vw_h^k \frac{\vu_h^{k*}}{2} \big) \cdot \bfphi}
\\& 
+\frac12 \vrf \intOfhk{   \big( \bfphi \cdot (\Grad \vu_h^k)   - \vu_h^k \cdot ( \Grad \bfphi)     \big)  \cdot \vv_h^{k-1}\circ \Xkm } 
\\& 
+ \intOfhk{ \bftau(\vu_h^k, p_h^k) : \Grad \bfphi  } 
+ \vrs  \intS{ \PDt \xi_h^k\psi}  
+a_s(\eta_h^{k+1},  \zeta_h^{k+1}, \xi_h^k ,\psi)
 =  0,
\end{split}
\end{equation}
\end{subequations}
where $\vv_h^{k-1} = \vu_h^{k-1} - \vw_h^k \circ\Xkminv$,  
$\vu_h^{k*} = 2\vu_h^{k-1}\circ\Xkm - \vu_h^k$,  $a_s$ is given in \eqref{abbs}, and $\zeta_h = - \Laph \eta_h$ is the (minus) discrete Laplace given by \eqref{laph}. 
\medskip 

\begin{Remark}\hfill

\begin{enumerate}
\item We omit the proof on how to identify the equivalence of \textbf{Scheme-R} and \textbf{Scheme-C} as it is similar to the proof of Lemma~\ref{Lwfref}.
\item 
In \textbf{Scheme-C} (or equivalently \textbf{Scheme-R}) we solve for each time step $t^k$, $k\in\{1,\dots, N\}$, the fluid variables $(\vu_h^k, p_h^k)$ in an explicit domain $\Ofh^k$ and solve the structure variable $\eta_h^{k+1}$,  which determines the fluid domain $\Ofh^{k+1}$ of the next time step $t^{k+1}$. 
This differs from many monolithic schemes defined in an implicit domain $\Ofh^k$ (or their equivalent form in the reference domain) when $\eta_h^k$ instead of $\eta_h^{k+1}$ is unknown at time step $t^k$. 
Such a kind of solver ``time splitting'' helps us to define a linear scheme without destroying the stability of the numerical solutions, see Theorem~\ref{Thm_Sta}.
\end{enumerate}
\end{Remark}
\begin{Remark}[{On the extension to 3D/2D}]
Many parts of our analysis are also valid when considering a three-dimensional fluid domain with a two-dimensional plate attached to it. However, the regularity of the (approximated) fluid domain is essentially weaker a priori. Observe that if the plate is two-dimensional, the discrete domain in space can not even be assumed to be uniformly Lipschitz continuous, as in two dimensions $W^{2,2}$ does not embed into Lipschitz functions.
\end{Remark}

\begin{Remark}
Note that we approximate the boundary deformation $\eta$ with a piecewise linear finite element space, resulting in a linear ALE mapping and a linear deformation of the fluid domain. Therefore, the geometry of the fluid domain is automatically captured at every time step, as every element $K\in \gridf$ is preserved as a triangle. 

Let us point out that the fourth order derivative in the structure (due to the bi-Laplacian term) is avoided by the introduction of a discrete Laplace operator, which maps a piecewise linear function space into the same space, see \eqref{laph}.  
\end{Remark}

\section{Stability}
\label{sec:stab}
In this section, we show the stability of the \textbf{Scheme-C} (or equivalently \textbf{Scheme-R}). We start with the following observation by recalling the discrete Laplace operator \eqref{laph}.
\begin{equation}\label{S1}
\begin{aligned}
&  \intS{\pdx \xi_h^{k} \pdx ( - \Laph \eta_h^{k+1})} 
= \intS{\Laph \xi_h^{k}  \Laph \eta_h^{k+1} } 
= \intS{\PDt \Laph \eta_h^{k+1} \Laph \eta_h^{k+1}  } 
\\& = \frac12  \intS{ \left( \PDt|\Laph \eta_h^{k+1}|^2  + \TS  |\PDt \Laph  \eta_h^{k+1}|^2 \right)  } 
\\& = \PDt  \intS{ \frac12|\Laph \eta_h^{k+1}|^2   } + \frac{\TS}2  \intS{ | \Laph  \xi_h^{k}|^2   } ,
\end{aligned}
\end{equation}
where we have used the algebraic equality 
\begin{equation}\label{algab}
(a-b) a =\frac12 (a^2 -b^2) + \frac12 (a-b)^2.
\end{equation}
Then, recalling \eqref{abbs} with the test function $\psi =\xi_h$ and thanks to \eqref{S1}, we find
\begin{equation}\label{norm_s}
\begin{split}
&  a_s(\eta_h^{k+1}, -\Laph \eta_h^{k+1}, \xi_h^{k}, \xi_h^{k})
\\&  =\PDt  \left( \frac{\gamma_1}{2}  \norm{ \pdx \eta_h^{k+1}}_{L^2(\Sigma)}^2 +  \frac{\gamma_2}{2}  \norm{ \Laph \eta_h^{k+1} }_{L^2(\Sigma)}^2 \right)
 + \gamma_3  \norm{ \pdx \xi_h^{k} }_{L^2(\Sigma)}^2 + D_s^{k},
\\&
\mbox{where }    D_s^{k} =  \frac{\gamma_1 \TS}{2} \intS{  \abs{\pdx \xi_h^{k}}^2 }
+ \frac{\gamma_2 \TS }{2} \intS{ \abs{ \Laph \xi_h^{k}}^2  } \geq 0.
\end{split}
\end{equation}

Now we are ready to show the energy estimates. 
\begin{tcolorbox}
\begin{Theorem}[Energy estimates]\label{Thm_Sta} 
Let $\{(p_h^{k},\vu_h^{k},\xi_h^k,\eta_h^{k+1})\}_{k=1}^{N}$ be the solution of \textbf{Scheme-C} (or equivalently, let $\{(\hp_h^{k},\hvu_h^{k},\xi_h^k,\eta_h^{k+1})\}_{k=1}^{N}$ be the solution of \textbf{Scheme-R}). Then we have the following stability result for all $m=1,\dots,N$
\begin{equation}\label{eq_Sta}
\begin{aligned}
    E_h^m + \TS \summ \left( 2\mu \norm{ (\Grad \vu_h^{k})^\rmS}_{L^2(\Ofh^k)}^2 + \gamma_3 \norm{\pdx  \xi_h^k }_{L^2(\Sigma)}^2 + D_{num}^k\right) =
    E_h^0
\end{aligned}
\end{equation}
where for any $k=0,\dots,N$ the total energy $E_h^k$ and the numerical dissipation $D_{num}^k$ read
\[
\begin{aligned}
& E_h^k = \frac{\vrf}{2}\norm{\vu_h^{k}}_{L^2(\Ofh^k)}^2  
+ \frac{\vrs}{2}\norm{\xi_h^k}_{L^2(\Sigma)}^2 
+ \frac{\gamma_1}{2}\norm{\pdx \eta_h^{k+1}}_{L^2(\Sigma)}^2 
+ \frac{\gamma_2}{2}\norm{\Laph \eta_h^{k+1}}_{L^2(\Sigma)}^2 
\\&
D_{num}^k=
 \frac{\vrf \TS }{2}  \intOfhk{  \mJkm \abs{\MDt \vu_h^k}^2  }
+ \frac{\vrs \TS}{2}  \norm{\PDt \eta_h^{k+1} }_{L^2(\Sigma)}^2
+ \frac{\gamma_1 \TS }{2}  \norm{\pdx  \xi_h^k }_{L^2(\Sigma)}^2 
+ \frac{\gamma_2\TS}{2}   \norm{\Laph  \xi_h^k }_{L^2(\Sigma)}^2 
\end{aligned}
\]
\end{Theorem}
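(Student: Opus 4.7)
The plan is to test \textbf{Scheme-R} with the solution itself and combine a small number of algebraic identities that turn each term into either a discrete time-difference of an energy quantity, a manifestly non-negative dissipation, or zero. I work with \textbf{Scheme-R} because the moving geometry enters only through the weight $\eta_h^k$ on the fixed reference domain, which keeps the bookkeeping transparent. Since $\hvu_h^k(\xrefx,1)=\xi_h^k(\xrefx)\er$ by the definition of $\hFSIh$, the triple $(\hq,\hbfphi,\psi)=(\hp_h^k,\hvu_h^k,\xi_h^k)$ is admissible. Using $\hq=\hp_h^k$ in \eqref{SKM_ref1} cancels the pressure contribution inside $\hbftau$ in \eqref{SKM_ref2}, so the pressure drops out of the energy identity.

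The central step is to turn the fluid kinetic contribution, which lives against a time-dependent measure, into a clean telescope. The identity \eqref{algab} gives
\[
\PDt \hvu_h^k\cdot \hvu_h^k=\tfrac{1}{2\TS}\bigl(|\hvu_h^k|^2-|\hvu_h^{k-1}|^2\bigr)+\tfrac{\TS}{2}|\PDt\hvu_h^k|^2,
\]
while a direct expansion of $\hvu_h^{k*}=2\hvu_h^{k-1}-\hvu_h^k$ yields $\hvu_h^{k*}\cdot \hvu_h^k=|\hvu_h^{k-1}|^2-\TS^2|\PDt\hvu_h^k|^2$. Weighting the first identity by $\vrf\eta_h^k$ and the second by $\tfrac{\vrf}{2}\PDt\eta_h^k=\tfrac{\vrf}{2\TS}(\eta_h^k-\eta_h^{k-1})$ and adding, the non-telescoping pieces cancel exactly, so the fluid time-derivative block reduces to
\[
\PDt\!\left(\tfrac{\vrf}{2}\intOref{\eta_h^k|\hvu_h^k|^2}\right)+\tfrac{\vrf\TS}{2}\intOref{\eta_h^{k-1}|\PDt\hvu_h^k|^2}.
\]
Pulling this back to $\Ofh^k$ via \eqref{Piola} gives exactly $\PDt(\tfrac{\vrf}{2}\norm{\vu_h^k}_{L^2(\Ofh^k)}^2)$ plus the fluid piece of $D_{num}^k$, since $\eta_h^{k-1}/\eta_h^k=\mJkm$ and $\PDt\hvu_h^k\circ(\ALEh^k)^{-1}=\MDt\vu_h^k$.

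The remaining terms are then routine. The skew-symmetric convective block of \eqref{SKM_ref2} vanishes pointwise for $\hbfphi=\hvu_h^k$. The viscous part of $\intOref{\hbftau:(\Grad\hbfphi\,(\Jacob_h^k)^{-1})\eta_h^k}$ transforms through the Piola identities into $2\mu\int_{\Ofh^k}|(\Grad\vu_h^k)^\rmS|^2$. The structure inertial term gives, by another application of \eqref{algab},
\[
\vrs\intS{\PDt\xi_h^k\,\xi_h^k}=\PDt\!\left(\tfrac{\vrs}{2}\norm{\xi_h^k}_{L^2(\Sigma)}^2\right)+\tfrac{\vrs\TS}{2}\norm{\PDt\xi_h^k}_{L^2(\Sigma)}^2.
\]
Finally, the elastic block $a_s(\eta_h^{k+1},\zeta_h^{k+1},\xi_h^k,\xi_h^k)$ is already processed in \eqref{norm_s}, which delivers the $\gamma_1$- and $\gamma_2$-telescopes, the $\gamma_3$-dissipation and the non-negative remainder $D_s^k$. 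Summing the resulting per-step equality from $k=1$ to $m$ collapses the $\PDt$-quantities into $E_h^m-E_h^0$ and gathers all dissipation contributions into the form claimed in \eqref{eq_Sta}.

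The only genuinely delicate point is the moving-mass step above. A naive test by $\hbfphi=\hvu_h^k$ alone would leave behind a residual proportional to $\tfrac12\PDt\eta_h^k|\hvu_h^k|^2$ that neither telescopes nor is signed. The extrapolation $\hvu_h^{k*}=2\hvu_h^{k-1}-\hvu_h^k$ in the scheme is crafted exactly so that $\hvu_h^{k*}\cdot\hvu_h^k$ produces the compensating $|\hvu_h^{k-1}|^2$ that converts $\eta_h^k|\hvu_h^k|^2-\eta_h^k|\hvu_h^{k-1}|^2+(\eta_h^k-\eta_h^{k-1})|\hvu_h^{k-1}|^2$ into the clean difference $\eta_h^k|\hvu_h^k|^2-\eta_h^{k-1}|\hvu_h^{k-1}|^2$. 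Once this cancellation is recognized, everything else is purely mechanical; the other small subtlety, namely that the residual quadratic term $-\TS^2|\PDt\hvu_h^k|^2$ coming from $\hvu_h^{k*}\cdot\hvu_h^k$ combines with $\tfrac{\vrf\TS}{2}\eta_h^k|\PDt\hvu_h^k|^2$ to give a positive coefficient $\eta_h^{k-1}$, is built into the same identity and is what produces the $\mJkm$-weighted fluid dissipation in $D_{num}^k$.
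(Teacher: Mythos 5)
Your proof is correct and follows essentially the same route as the paper: test with the solution itself, use the identity \eqref{algab} and the expansion of $\vu_h^{k*}=2\vu_h^{k-1}-\vu_h^k$ to produce the telescoping kinetic energy plus the $\mJkm$-weighted dissipation, and invoke \eqref{norm_s} for the elastic block. The only difference is presentational: you carry out the moving-mass bookkeeping directly on the reference domain with the explicit weights $\eta_h^k$, whereas the paper routes the same cancellation through the discrete Reynolds transport formula \eqref{DRT} on the Eulerian domain; the two computations are identical after the change of variables \eqref{Piola}.
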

\end{tcolorbox}
\begin{proof}
We test the numerical method \eqref{SKM} by $(q,\bfphi,\psi) = (p_h^k,\vu_h^k,\xi_h^k)$ to get 
\begin{equation}\label{k0}
\begin{aligned}
\vrf \intOfhkB{ \MDt \vu_h^k \cdot \vu_h^k +  \Div  \vw_h^k \Big( \vu_h^{k-1} \circ \Xkm  - \frac{ \vu_h^k}{2} \Big)\cdot \vu_h^k } 
\\ + 2 \mu   \norm{ (\Grad \vu_h^{k})^\rmS}_{L^2(\Ofh^k)}^2    + \vrs  \intS{ \PDt \xi_h^k \xi_h^k }  +  a_s(\eta_h^{k+1}, \zeta_h^{k+1}, \xi_h^k ,  \xi_h^k ) 
= 0.
\end{aligned}
\end{equation} 
Next, using the equality \eqref{algab} we get 
\begin{equation}\label{k1}
 \intS{ \PDt  \xi_h^k  \xi_h^k } = \frac12  \intS{ \PDt | \xi_h^k |^2}
+ \frac{\TS}2  \intS{ |\PDt  \xi_h^k |^2}, 
\end{equation} 
and
\begin{equation}\label{k2}
\begin{aligned}
& 
\intOfhkB{ \MDt \vu_h^k \cdot \vu_h^k  +  \Div  \vw_h^k  \Big( \vu_h^{k-1} \circ \Xkm  - \frac{ \vu_h^k}{2} \Big) \cdot \vu_h^k } 
\\& = 
\intOfhkB{ \MDt \frac{\abs{\vu_h^k}^2}{2}    + \frac{\TS}{2}\abs{\MDt \vu_h^k }^2  
+   \Div  \vw_h^k  ( \vu_h^{k-1} \circ \Xkm \cdot \vu_h^k -\frac{ \abs{\vu_h^k}^2}{2}  ) } 
\\&
=\PDt \intOfhk{ \frac12 |\vu_h^k|^2} + I_0,
\end{aligned}
\end{equation}
where we have also used  the discrete Reynolds transport formula \eqref{DRT}. Here, the term $I_0$ reads
\begin{align*}
& I_0 =  \intOfhkB{
  \frac{\TS}{2}\abs{\MDt \vu_h^k }^2  
+  \Div  \vw_h^k  \bigg(  \vu_h^{k-1} \circ \Xkm \cdot \vu_h^k - \frac{ \abs{\vu_h^k}^2}{2}  -  \frac{   \abs{ \vu_h^{k-1} \circ \Xkm}^2}{2}   \bigg)
 }
 \\& = 
  \intOfhk{
 \left( \frac{\TS}{2}  -  \frac{\TS^2}2  \Div  \vw_h^k  \right) \abs{\MDt \vu_h^k }^2  
 }
= \frac{\TS}2 \intOfhk{ \mJkm |\MDt \vu_h^k |^2   },
\end{align*}
where  we have used the \eqref{euler2}.

 Substituting \eqref{k1} and \eqref{k2} into \eqref{k0} and owing to \eqref{norm_s}, we derive
\begin{equation}\label{k3}
    \PDt E_h^k +  2\mu \norm{ (\Grad \vu_h^{k})^\rmS}_{L^2(\Ofh^k)}^2 + \gamma_3 \norm{\pdx  \xi_h^k }_{L^2(\Sigma)}^2 + D_{num}^k = 0.
\end{equation}
Finally, computing $\TS\, \summ \eqref{k3}$ yields \eqref{eq_Sta}, 
which completes the proof. 
\end{proof}
\medskip
The above stability estimate can be rewritten in the reference domain as
\begin{equation}\label{eq_Sta2}
\begin{aligned}
&\vrf  \intOref{  \frac12 \eta_h^m   |\hvu_h^{m}|^2  }  +\vrs \intS{ \frac12 \abs{ \xi_h^m}^2 }
+  \frac{\gamma_1}{2}  \norm{ \pdx \eta_h^{m+1} }_{L^2(\Sigma)}^2 
+  \frac{\gamma_2}{2}  \norm{ \Laph \eta_h^{m+1} }_{L^2(\Sigma)}^2
\\&
+ 2 \mu \TS \summ \intOref{ \eta_h^k \abs{(\Grad \hvu_h^k (\Jacob_h^k)^{-1})^\rmS }^2 } 
+  \gamma_3  \TS \summ   \norm{ \pdx  \xi_h^k }_{L^2(\Sigma)}^2 
\\&
+\frac{\TS^2 }2  \summ \vrf  \intOref{  \eta_h^{k-1} \abs{ D_t \hvu_h^k}^2 }
+\frac{\TS ^2}{2} \summ \vrs \intS{ \abs{ \PDt  \xi_h^k }^2   } 
\\&
 + \frac{\gamma_1 \TS ^2}{2} \summ  \intS{  \abs{\pdx  \xi_h^k }^2 }
+ \frac{\gamma_2 \TS ^2}{2}\summ  \intS{ \abs{ \Laph  \xi_h^k }^2  }  
\\& = 
\vrf  \intOref{  \frac12 \eta_h^0 |\vu_h^{0}|^2 } + \vrs \intS{ \frac12 \abs{ \xi_h^{0}}^2 } 
+  \frac{\gamma_1}{2}  \norm{ \pdx \eta_h ^{1}}_{L^2(\Sigma)}^2 
+  \frac{\gamma_2}{2}  \norm{ \Laph \eta_h^{1} }_{L^2(\Sigma)}^2,
 \end{aligned}
\end{equation}
\medskip
Note that $\eta_h$ appears on the left-hand-side (LHS) of the energy estimates \eqref{eq_Sta} (see also \eqref{eq_Sta2}) and determines if all terms on the LHS of the energy balance are non-negative or not. Therefore, it is important to preserve the positivity of $\eta_h$ in order to get a priori estimates. Actually, there exists a $T_0>0$ such that for all $T \leq T_0$ we have no contact between the upper surface and the bottom surface of $\Of$, see \cite[Lemma 5]{SS_FSI}. More precisely, if $\eta_h(0)>\underline{\eta}$, for every $c$ there exists a $T$, such that
 \begin{equation}\label{noc}
\eta_h \geq \underline{\eta}-c \quad \forall \; t \in [0,T].
\end{equation}
From Theorem~\ref{Thm_Sta} and the above assumption, we have the following uniform estimates.  
\begin{Corollary}\label{co_est}
Let the initial data satisfy $\vu_0 \in W^{1,2}( \Of(0);\R^2), \; \eta_0 \in W^{1,2}(\Sigma), \text{ and }  \xi_0 \in W^{1,2}(\Sigma)$. 
Let $(p_h,\vu_h,\xi_h,\eta_h)  = \{(p_h^{k},\vu_h^{k},\xi_h^{k},\eta_h^{k+1})\}_{k=1}^{N}$ be a  solution to \textbf{Scheme-C} (or equivalently $(\hp_h,\hvu_h,\xi_h,\eta_h)$ be a solution to \textbf{Scheme-R}) with $(\TS,h ) \in (0,1)^2$ and let \eqref{noc} hold.  Then we have the following uniform bounds.
\begin{equation}\label{ests}
\begin{aligned}
 &
\norm{\xi_h}_{L^\infty (0,T;L^2(\Sigma))} 
+ \norm{\pdx \eta_h}_{L^\infty (0,T;L^2(\Sigma))} 
+ \norm{\Laph \eta_h}_{L^\infty (0,T;L^2(\Sigma))}  \aleq 1,
\\& 
\norm{\eta_h^{-1}}_{L^\infty((0,T)\times \Sigma)} +
 \norm{\eta_h}_{L^\infty((0,T)\times \Sigma)} + 
\norm{\pdx \eta_h}_{L^\infty((0,T)\times \Sigma)} \aleq 1,
\\& 
\norm{\Jacob_h}_{L^\infty((0,T)\times \Sigma;\R^{2 \times 2})} 
+ \norm{\Jacob_h^{-1}}_{L^\infty((0,T)\times \Sigma;\R^{2 \times 2})}  
\aleq 1 ,
\\& 
\norm{\hvu_h}_{L^\infty (0,T;L^2(\Oref))} + 
\norm{(\Grad \hvu_h (\Jacob_h)^{-1})^\rmS  }_{L^2((0,T)\times \Oref)} \aleq 1 ,
\\& 
  \norm{\Grad \hvu_h }_{L^2((0,T)\times \Oref)} \aleq 1, \quad 
  \norm{\hvu_h}_{L^2(0,T;L^{q_1}(\Oref))} \aleq 1,
\\&
   \norm{ \xi_h}_{L^2 (0,T; L^\infty (\Sigma))} \aleq \norm{\Grad \hvu_h }_{L^2((0,T)\times \Oref)} \aleq 1 ,
\\&
  \norm{\hvw_h}_{L^\infty (0,T;L^2(\Oref))} +   \norm{\hvw_h}_{L^2(0,T;L^\infty (\Oref))} \aleq  1,
\\&
    \norm{\hvv_h}_{L^2(0,T;L^{q_1}(\Oref))} 
 +  \norm{\hvv_h}_{L^\infty (0,T;L^2(\Oref))} \aleq 1, \quad 
 \norm{\hvv_h}_{L^{q_2}(0,T;L^{q_1}(\Oref;\R^2))}\aleq 1 
.
\end{aligned}
\end{equation} 
for any $q_1\in [1,\infty)$ and  $q_2 \in [1,\infty)$. 
\end{Corollary}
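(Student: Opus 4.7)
The plan is to derive the bounds in \eqref{ests} by first reading off the primary ones directly from the reference-domain energy identity \eqref{eq_Sta2} and then upgrading these via the geometric non-degeneracy \eqref{noc}, Korn's inequality, Sobolev embedding on $\Oref \subset \R^2$, the one-dimensional embedding $H^1(\Sigma) \hookrightarrow L^\infty(\Sigma)$, and a Gagliardo--Nirenberg interpolation at the fluid--structure interface. The prescribed initial regularity $\vu_0 \in W^{1,2}(\Of(0))$ and $\eta_0, \xi_0 \in W^{1,2}(\Sigma)$ makes the right-hand side of \eqref{eq_Sta2} uniformly bounded in $(\TS,h)$, so every term on its left-hand side is a candidate for a uniform bound.

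First I would record the primary bounds that appear verbatim in \eqref{eq_Sta2}: an $L^\infty_t L^2_x$ bound for $\xi_h$, $\pdx \eta_h$ and $\Laph \eta_h$, a weighted $L^\infty_t L^2_x(\Oref)$ bound for $\sqrt{\eta_h}\,\hvu_h$, a weighted $L^2((0,T) \times \Oref)$ bound for the symmetrised gradient $\sqrt{\eta_h}(\Grad \hvu_h\, \Jacob_h^{-1})^\rmS$, together with the $\gamma_3$-dissipation (when present) and the full numerical dissipation $D_{num}^k$. Next, the non-degeneracy \eqref{noc} gives $\eta_h \geq \underline{\eta} - c > 0$ on $[0,T]$, hence $\|\eta_h^{-1}\|_{L^\infty} \lesssim 1$. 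Since $\eta_h \in \Vsh$ is piecewise affine on the one-dimensional quasi-uniform mesh, the bound on $\|\Laph \eta_h\|_{L^2(\Sigma)}$ (interpreted via its definition \eqref{laph}) encodes a discrete $H^2$ control of $\eta_h$; the uniform one-dimensional discrete embedding $H^2_h \hookrightarrow W^{1,\infty}$ then yields an $L^\infty$ bound on $\pdx \eta_h$, and thereby on $\eta_h$ itself via integration from the boundary. With these pointwise bounds in hand, the explicit formula \eqref{Jacobh} immediately produces the claimed uniform bounds on $\Jacob_h$ and $\Jacob_h^{-1}$.

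With the geometry under control, the weighted velocity bounds transfer to unweighted ones. Pulling back to the Eulerian domain $\Ofh^k$, where the homogeneous Dirichlet condition on $\Gamma_D$ is available, Korn's inequality gives $\|\Grad \vu_h\|_{L^2(\Ofh^k)} \lesssim \|(\Grad \vu_h)^\rmS\|_{L^2(\Ofh^k)}$; changing variables back using the established bounds on $\Jacob_h$ produces $\|\Grad \hvu_h\|_{L^2((0,T)\times \Oref)} \lesssim 1$. The two-dimensional Sobolev embedding $H^1(\Oref) \hookrightarrow L^{q_1}(\Oref)$ for every $q_1 < \infty$ then yields the $L^2_t L^{q_1}_x$ bound on $\hvu_h$. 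The mesh velocity bounds follow from the explicit formula $\hvw_h = (0, \PDt \eta_h\, \xrefy)$ together with $\PDt \eta_h^{k+1} = \xi_h^k$ and the already-proven bounds on $\xi_h$. The relative velocity $\hvv_h = \hvu_h - \hvw_h$ inherits its bounds by the triangle inequality, and the mixed estimate $\|\hvv_h\|_{L^{q_2}_t L^{q_1}_x} \lesssim 1$ follows by H\"older interpolation between $L^\infty_t L^2_x$ and $L^2_t L^{q_1}_x$.

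The step I expect to be the main obstacle is the trace-type estimate $\|\xi_h\|_{L^2(0,T; L^\infty(\Sigma))} \lesssim \|\Grad \hvu_h\|_{L^2}$: a na\"ive trace of $H^1(\Oref)$ only lands in $H^{1/2}(\Sigma)$, which does not embed in $L^\infty$ in one dimension. I would sidestep this by combining the coupling $\hvu_h(\cdot,1) = \xi_h\, \er$ with the one-dimensional Gagliardo--Nirenberg interpolation $\|\xi_h\|_{L^\infty(\Sigma)}^2 \lesssim \|\xi_h\|_{L^2(\Sigma)} \|\pdx \xi_h\|_{L^2(\Sigma)}$, controlling the first factor by the primary $L^\infty_t L^2_x$ bound and the tangential-derivative factor via a trace of $\Grad \hvu_h$ along $\widehat{\Gamma}_S$ that exploits the piecewise-affine structure of $\xi_h$ on the shape-regular quasi-uniform grid. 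A Cauchy--Schwarz in time together with the $L^2((0,T)\times \Oref)$ bound on $\Grad \hvu_h$ then closes the argument.
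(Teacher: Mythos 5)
Your overall route coincides with the paper's: bound the initial energy $E_h^0$ (the paper does this via the stability \eqref{RieszE} of the Riesz projection, a step you should make explicit since $\eta_h^0=\Riesz\eta_0$, $\xi_h^0=\Riesz\xi_0$ and $\eta_h^1=\eta_h^0+\TS\xi_h^0$), read the primary bounds off the energy identity \eqref{eq_Sta}/\eqref{eq_Sta2}, use \eqref{noc} and the $L^\infty$ control of $\eta_h,\pdx\eta_h$ for the Jacobians \eqref{Jacobh}, then Korn's inequality, the Sobolev inequalities \eqref{Sob} and the triangle inequality for the remaining lines. Up to and including the $L^2_tL^{q_1}_x$ bound on $\hvu_h$ and the interpolation argument for $\hvv_h$, your write-up is sound and considerably more detailed than the paper's one-line justification.

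The genuine gap is the sixth line, $\norm{\xi_h}_{L^2(0,T;L^\infty(\Sigma))}\aleq\norm{\Grad\hvu_h}_{L^2((0,T)\times\Oref)}$, which you correctly identify as the obstacle but do not close. Your Gagliardo--Nirenberg step $\norm{\xi_h}_{L^\infty(\Sigma)}^2\aleq\norm{\xi_h}_{L^2(\Sigma)}\norm{\pdx\xi_h}_{L^2(\Sigma)}$ requires a uniform bound on $\norm{\pdx\xi_h}_{L^2(\Sigma)}$, and none is available: the energy balance only dissipates $\gamma_3\norm{\pdx\xi_h}_{L^2(\Sigma)}^2$, and the paper explicitly allows (and in the experiments uses) $\gamma_3=0$. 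The fallback you sketch---controlling $\pdx\xi_h$, i.e.\ the tangential derivative of the second component of $\hvu_h$ on $\widehat{\Gamma}_S$, "via a trace of $\Grad\hvu_h$"---fails because the trace of the gradient of an $H^1(\Oref)$ function does not lie in $L^2(\Sigma)$, and the discrete trace/inverse inequality for the piecewise-linear $\xi_h$ costs a factor $h^{-1/2}$, destroying uniformity. What the velocity gradient alone yields is the borderline trace bound $\norm{\xi_h}_{L^p(\Sigma)}\aleq\norm{\Grad\hvu_h}_{L^2(\Oref)}$ for every finite $p$ (this is precisely the paper's \eqref{eq:trace}, via $H^1(\Oref)\to H^{1/2}(\Sigma)\hookrightarrow L^p(\Sigma)$); an honest $L^\infty(\Sigma)$ bound needs either $\gamma_3>0$ or a logarithmic-in-$h$ correction. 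The same issue propagates to $\norm{\hvw_h}_{L^2(0,T;L^\infty(\Oref))}$ in the seventh line, since $\hvw_h^k=(0,\xi_h^{k-1}\xrefy)$. All remaining lines of your argument stand.
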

\begin{proof}
Noticing that $\eta_h^1=\eta_h^0 + \TS \xi_h^0$, $\TS<1$, and the algebraic inequality $(a+b)^2 \leq 2(a^2+b^2)$, we know that 
\begin{align*}
& \frac{\gamma_1}{2}\norm{\pdx \eta_h^1}_{L^2(\Sigma)}^2 
+ \frac{\gamma_2}{2}\norm{\Laph \eta_h^1}_{L^2(\Sigma)}^2 
\\& 
\leq 
\gamma_1 \left( \norm{\pdx \eta_h^0}_{L^2(\Sigma)}^2 +
\TS^2 \norm{\pdx \xi_h^0}_{L^2(\Sigma)}^2 \right)
+ \gamma_2\left(\norm{\Laph \eta_h^0}_{L^2(\Sigma)}^2 
 + \TS^2 \norm{\Laph \xi_h^0}_{L^2(\Sigma)}^2  \right)
\\& < 
\gamma_1 \left( \norm{\pdx \eta_h^0}_{L^2(\Sigma)}^2 +
 \norm{\pdx \xi_h^0}_{L^2(\Sigma)}^2 \right)
+ \gamma_2\left(\norm{\Laph \eta_h^0}_{L^2(\Sigma)}^2 
 + \norm{\Laph \xi_h^0}_{L^2(\Sigma)}^2  \right)
 \\&\leq c(\gamma_1, \gamma_2, \norm{ \eta_0}_{W^{1,2}(\Sigma)}, \norm{ \xi_0}_{W^{1,2}(\Sigma)}) , 
\end{align*}
where we have used the stability of the Riesz projection operator in the last step, see \eqref{RieszE}. 
Therefore, the right-hand side of the energy estimate \eqref{eq_Sta} is uniformly bounded by a positive constant. Then, we have \eqref{ests}$_1$ and  \eqref{ests}$_4$ after noticing $\eta_h \geq \underline{\eta}>0$. Further, by Korn's inequality, Sobolev's inequalities \eqref{Sob},  the assumption \eqref{noc}, and triangular inequality, we get all the rest estimates. 
\end{proof}

\section{Interpolation operators}
A critical difficulty in convergence analysis is the appropriate choice of interpolation operators for the couple $(\vu,\xi)$, that inherit not only the kinematic coupling condition at the fluid-structure interface but also the divergence-free property of the velocity field. 
Before digging into this issue, we recall some analytic estimates that we will use frequently. First, the discrete Sobolev inequalities
\begin{subequations}\label{Sob}
\begin{equation}\label{P4}
 \norm{\hv}_{L^{q_1}(\Oref)} \aleq \norm{\hv}_{W^{1,2}(\Oref)} 
\mbox{ for } \hv \in W^{1,2}(\Oref),  \quad q_1\in[1, \infty), 
\end{equation}
\begin{equation}\label{P5}
 \norm{v}_{L^{\infty}(\Sigma)} \aleq \norm{v}_{W^{1,2}(\Sigma)}  \mbox{ for } v \in W_0^{1,2}(\Sigma).
\end{equation}
\end{subequations}
Next, we recall the standard projection error, see e.g. Boffi et. al~\cite{boffi}
\begin{equation}\label{proe}
\begin{aligned}
\norm{\Piq p - p }_{W^{k,s}} \aleq h \norm{p}_{W^{k+1,s}}, \; k=1,2, \; s \in [1,\infty]. 
\end{aligned}
\end{equation}
where $\Piq: \;  L^2(\Oref)   \; \mapsto \;   \Qfh$ is any suitable projection operator satisfying the above (see for example~\cite[Section 2.2]{boffi}), which we will use for the pressure. The projection operators for the velocities (both for the fluid and the solid) are much more complicated in this framework. It starts already with the necessity of a careful choice of the interpolation operator for solid deformation.

\subsection{Interpolation operator for the solid deformation}
For the solid we will use the Riesz projection as the interpolation operator. 
Let $\eta\in W^{1,2}(\Sigma)$, our Riesz projection operator $\Riesz$ reads
\begin{align}\label{Riesz1}
\intS{\pdx (\Riesz \eta - \eta)  \pdx \psi} =0  \quad \forall \; \psi \in \Vsh \text{ with } \intS{ \Riesz \eta}=\intS{ \eta}.
\end{align}
Recalling the discrete Laplace \eqref{laph} we know that for any $\eta \in W^{2,2}(\Sigma)$ it holds
\begin{align}\label{Riesz2}
\intS{(\Laph \Riesz \eta - \Lapx \eta)  \; \psi} = 0 \quad \forall \; \psi \in \Vsh.
\end{align}
Setting $\psi$ as $\Riesz \eta$ and $\Laph \Riesz \eta$ respectively in \eqref{Riesz1} and \eqref{Riesz2}, we obtain by H\"older's inequality that
\begin{equation}\label{RieszE}
\norm{\pdx \Riesz \eta}_{L^2(\Sigma)} \aleq \norm{\pdx \eta}_{L^2(\Sigma)}
\quad \mbox{ and } \quad 
\norm{\Laph \Riesz \eta}_{L^2(\Sigma)} \aleq \norm{\Lapx \eta}_{L^2(\Sigma)}, 
\end{equation}
where by $a\aleq b$ we mean $a\leq c b$ for a positive constant $c$ that is independent of the computational parameters $\TS$ and $h$. 
Further, 
if $\eta \in W^{3,2}(\Sigma)$, we have 
\begin{equation}\label{pro_eR}
\norm{\Laph \Riesz \eta - \Lapx \eta}_{L^2(\Sigma)} \aleq h \norm{\eta}_{ W^{3,2}(\Sigma)},
\end{equation}
 see the detailed proof in Lemma~\ref{lem:w22} for not only one-dimensional $\Sigma$ but also a multi-dimensional domain.

This concludes all necessary estimates that we need for the approximation of $\eta$ as well as the structure velocity $\xi$. Building on these properties we have to extend this projection into the fluid-reference domain. 

\subsection{Interpolation operator for the fluid velocity}
Given the divergence-free velocity field $\vu$ with the boundary condition $\vu|_{\Gamma_S} =\xi \er$, our aim here is to construct an interpolation operator 
 $\PiF:W^{1,2}(\Oref))\to \hVfh$, such that for $q\in [2,\infty)$
\[
h^{1-\frac{2}{q}}\norm{\hvu - \PiF \hvu}_{L^\gamma(\Oref)} + h  \norm{\Grad \hvu - \Grad \PiF \hvu}_{L^2(\Oref)} \leq \widehat{C} h^2 \norm{\Delta \hvu}_{L^2(\Oref)},\] 
which has to satisfy also the following two restrictions:
\begin{itemize}
\item Kinematic condition $\Pif \vu |_{\Gamma_S} =\Riesz \xi \er$. 
\item Weakly divergence-free condition. 
Here, one may naively consider the form $\intOref{\hq \Grad \Pif \hvu: \M}=0$. However, it is necessary to have  
$\intOref{\hq \Grad \Pif \hvu: \Mh}=0$ for the convenience of convergence analysis, see Remark \ref{pifmh}.
\end{itemize}
The next theorem takes care of the first bullet. For it we need the following lemma of analytical extensions, see \cite[Proposition 3.5]{KamSchSpe22}.

\begin{Lemma}
\label{lem:extension}
Assume that $\Omega_{\eta_h}$ is a given subgraph, with $\eta_h>\delta$. 
Let $\phi\in C_0^\infty((\Sigma\times { [0,\delta/2)};[0,\infty))$ such that $ \int_{\Sigma\times  [0,\delta/2) } \phi \dvol=1$. Then there is an extension operator $\Ecal:W^{k,p}(\Sigma)\to W^{k,p}(\Omega_\eta)$, for $k\in \{0,1,2,...\}$ and $1<p<\infty$, such that the following hold:
\begin{enumerate}
\item $\Ecal(\xi)(\xx,\xy)=(0,\xi(\xx))\text{ for } \xy \in [\delta,\infty)$.  
\item $\Div\Ecal(\xi)=\partial_2 \phi \intS{ \xi}$. 
\item $\norm{\nabla^k\Ecal(\xi)}_{L^p(\Omega_\eta)}\leq c \norm{\xi}_{W^{k,p}(\Sigma)}$,
\end{enumerate}
where $c$ depends on $p,k$ and $\delta$ only. 
\end{Lemma}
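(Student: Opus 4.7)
My plan is to construct $\Ecal$ explicitly via an additive formula that already bakes in both the prescribed form $(0,\xi(\xx))$ at heights $\xy \geq \delta$ and the prescribed divergence $\partial_2\phi\,\intS{\xi}$. Concretely, I would set
\[
    \Ecal(\xi)(\xx,\xy) := \Bigl(0,\; \xi(\xx) + \phi(\xx,\xy)\,\intS{\xi}\Bigr).
\]
The summand $\xi(\xx)$ carries the prescribed second component $(0,\xi(\xx))$ throughout $\Omega_\eta$ (exact on the upper part), while the term $\phi(\xx,\xy)\,\intS{\xi}$ is localised in the strip $\Sigma\times(0,\delta/2)$ by the support of $\phi$ and is tailored so that its $\partial_2$-derivative produces exactly the required divergence.

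Having posited this formula I would verify the three items in order. Item~1 is immediate: since $\phi(\xx,\cdot)$ vanishes on $[\delta/2,\infty)$ by its support hypothesis, in particular it vanishes for $\xy\geq\delta$, and there $\Ecal(\xi)(\xx,\xy)=(0,\xi(\xx))$. Item~2 is a direct computation using that the first component is zero and $\partial_2\xi(\xx)=0$, so that the divergence collapses to $\partial_2\phi\,\intS{\xi}$. For item~3 I split the norm into two contributions. Derivatives of $\phi\,\intS{\xi}$ are bounded in $L^p(\Omega_\eta)$ by a fixed constant (depending on $\phi$, hence on $\delta$) times $|\intS{\xi}|$, and H\"older gives $|\intS{\xi}|\leq |\Sigma|^{1-1/p}\norm{\xi}_{L^p(\Sigma)}$. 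The summand $\xi(\xx)$ is independent of $\xy$, so
\[
    \norm{\pdx^k\xi}_{L^p(\Omega_\eta)}^p \;=\; \intS{\eta(\xx)\,|\pdx^k\xi|^p} \;\leq\; \norm{\eta}_{L^\infty(\Sigma)}\,\norm{\xi}_{W^{k,p}(\Sigma)}^p,
\]
and combining the two bounds yields the desired estimate.

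I expect this elementary construction to suffice precisely because the lemma imposes no vanishing-trace condition on $\Ecal(\xi)$ at the lower wall $\Gamma_D$. Were a homogeneous Dirichlet condition required there, one would instead multiply $\xi(\xx)$ by a cutoff $\chi(\xy)$ that vanishes near $\xy=0$ and equals $1$ for $\xy\geq\delta$, and then repair the leftover divergence via a Bogovskij-type operator on $\Omega_\eta$; that is the heavier machinery developed in \cite{KamSchSpe22}, which is presumably why the authors cite it. The only minor subtlety is the dependence of the constant $c$ in item~3: strictly speaking it also depends on $\phi$, but this dependence is absorbed once one fixes a canonical choice of $\phi$ adapted to the scale $\delta$.
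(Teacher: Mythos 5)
Your construction is correct for the lemma \emph{as literally stated}, but note first that the paper does not prove this lemma at all: it is quoted verbatim from \cite[Proposition~3.5]{KamSchSpe22}, so there is no in-paper argument to compare against. Your additive formula $\Ecal(\xi)=(0,\ \xi(\xx)+\phi(\xx,\xy)\intS{\xi})$ does verify items 1--3: item~1 from the support of $\phi$, item~2 by direct differentiation, and item~3 by splitting the two summands as you do. Two remarks on item~3: the constant unavoidably also depends on $\norm{\eta}_{L^\infty(\Sigma)}$ (any extension satisfying item~1 fills the slab $\Sigma\times[\delta,\eta(\xx)]$ with $(0,\xi)$, so its $L^p$ norm scales with the height of the domain); this is a looseness in the statement itself, not in your argument, and is harmless in the application where $\eta_h$ is uniformly bounded.

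The genuine difference from the cited construction is the one you half-identify yourself: your $\Ecal(\xi)$ does not vanish on the bottom wall, since at $\xy=0$ it equals $(0,\xi(\xx))$ (plus $\phi(\xx,0)\intS{\xi}$). The operator of \cite{KamSchSpe22} is built from a vertical cutoff (so that the extension is supported away from $\Gamma_D$) followed by a Bogovskij correction of the leftover divergence, and this vanishing at $\Gamma_D$ --- although omitted from the list of properties in the statement --- is tacitly used where the lemma is applied: in Part~II of the proof of Theorem~\ref{thm:interpol1} the competitor $\PiF(\bfphi+\Ecal(\Riesz\xi-\xi))$ must lie in $\Vfh$, whose elements have zero trace on $\Gamma_D$, and $\bfphi$ already vanishes there. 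With your $\Ecal$ the perturbed function, and hence its interpolant, would violate that constraint, so the downstream minimization argument would break. In short: your proof establishes the stated properties by a strictly more elementary route, but the statement is missing a property that the paper actually needs, and only the heavier cutoff-plus-Bogovskij construction (which you correctly name as the fix) supplies it.
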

Note that $\Div\vu=0$, implying that
$
0=\int_{\Gamma_S(t)}v\cdot \nu \ds= \intS{\pd_t \eta}.
$
It implies that 
\begin{align}
\label{eq:meanv}
\Div \Ecal(\partial_t\eta)  =   \pd_2 \phi \intS{\pd_t \eta}  =0 \text{ for any }\eta_h\geq \delta \text{ as above.}
\end{align}
Our construction follows tightly \cite[Section 2.2 and Section 8.4]{boffi}, where more details on the notation and arguments can be found. Following the argumentation there it seems more natural to work on the computed Eulerian grid, that is the grid pushed forward by $\ALEh$. For that reason, we first introduce the auxiliary operator $\Pif$ on $\Omega_{\eta_h}$ that eventually becomes the basis for the desired operator $\PiF$.
\begin{Theorem}
\label{thm:interpol1}
Let the grids $\gridf$ and $\grids$ respectively 
defined on the domains $\Oref$ and $\Sigma$ be shape regular and quasi-uniform. Moreover, let $\Gamma_S=\{ (\xx,\eta_h(\xx)) : \xx\in \Sigma\}$ satisfy $ \min_\Sigma \eta_h \geq \delta $ and $\norm{\partial_{\xx} \eta_h}_{L^\infty} \leq L$ for some positive constants $\delta$ and $L$. 
Then there exists an interpolation operator
\begin{align*}
\Pif: \;  \; \FSIh \to \;  \FSIh, 
\end{align*}
that satisfies for $(\xi ,\hbfphi)\in \hFSIh$, $\gamma<\infty$ and ${\bfphi}:= \hbfphi\circ \ALEhinv$
\begin{align*}
 \norm{ \bfphi- \Pif \bfphi}_{L^\gamma(\Omega_{\eta_h})} + h  \norm{\Grad ( \bfphi  - \Pif \bfphi)}_{L^2(\Omega_{\eta_h})} &\aleq  h^2 \norm{\hbfphi}_{W^{2,2}(\Oref)}+ h^2 \norm{ \xi }_{W^{2,2}(\Sigma)}, 
\\
\norm{\Pif \bfphi}_{L^\gamma(\Omega_{\eta_h})} +  \norm{\Grad  \Pif \bfphi }_{L^2(\Omega_{\eta_h})} &\aleq\norm{\hbfphi}{W^{1,2}(\Oref)}+\norm{ \xi }_{W^{1,2}(\Sigma)},
\end{align*}
where the bounds depend linearly on $\frac{1}{\delta}, L, \frac{L}{\delta}$.
Moreover, we find 
$\Pif \bfphi (\xx,\eta_h(\xx)) =  (0,\Riesz  \xi (\xx)) $ on $\Sigma$ and
\begin{equation*}
\intOfh{q \Div \Pif \bfphi} = \intOfh{q \Div    \bfphi} \quad \forall \; q \in \Qfh.
\end{equation*}
\end{Theorem}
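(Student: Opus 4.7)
The plan is a MINI--Fortin construction applied to a continuous modification $\bfphi^*$ of $\bfphi$ that carries the correct top trace $(0,\Riesz\xi)$ while sharing the divergence of $\bfphi$; the extension operator $\Ecal$ of Lemma~\ref{lem:extension} will handle the boundary alignment continuously, so that the discrete operator is produced by the classical MINI Fortin recipe.

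I start by defining $\bfphi^*:=\bfphi+\Ecal(\Riesz\xi-\xi)$ on $\Omega_{\eta_h}$, with $\Ecal$ chosen to also vanish on $\Gamma_D$. Because $\eta_h\geq\delta$, property~(1) of Lemma~\ref{lem:extension} yields $\bfphi^*|_{\Gamma_S}=(0,\Riesz\xi)$ and $\bfphi^*|_{\Gamma_D}=0$; property~(2) combined with the Riesz normalization $\intS{\Riesz\xi}=\intS{\xi}$ built into \eqref{Riesz1} forces $\Div\Ecal(\Riesz\xi-\xi)\equiv 0$, so that $\Div\bfphi^*=\Div\bfphi$ pointwise in $\Omega_{\eta_h}$. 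Since $\grids$ is the surface trace of $\gridf$ and $\Riesz\xi\in\Vsh$ is piecewise $P^1$ on that skeleton, $\bfphi^*|_{\Gamma_S}$ already lies in the discrete fluid-trace space. I then apply the classical MINI Fortin construction: letting $I_{SZ}$ be a Scott--Zhang interpolant on the pushed-forward mesh $\ALEh(\gridf)$ that preserves discrete boundary traces (so $I_{SZ}\bfphi^*|_{\Gamma_S}=(0,\Riesz\xi)$ exactly), I set
\[
\Pif\bfphi := I_{SZ}\bfphi^* + \sum_{K^*\in\ALEh(\gridf)} \vec{\alpha}_{K^*}\,b_{K^*},\qquad \vec{\alpha}_{K^*} := \Bigl(\int_{K^*} b_{K^*}\Bigr)^{-1}\!\int_{K^*}(\bfphi^* - I_{SZ}\bfphi^*).
\]
Bubbles vanish on $\pd K^*$, so $\Pif\bfphi|_{\Gamma_S}=(0,\Riesz\xi)$, and $\int_{K^*}(\Pif\bfphi-\bfphi^*)=0$ holds on every element by construction.

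The Fortin identity follows immediately: for any $q\in\Qfh$, integration by parts using the matched boundary traces yields
\[
\int_{\Omega_{\eta_h}} q\,\Div(\Pif\bfphi - \bfphi^*) = \sum_{K^*} \Grad q|_{K^*}\cdot \int_{K^*}(\bfphi^* - \Pif\bfphi) = 0,
\]
since $\Grad q$ is piecewise constant; combined with $\Div\bfphi^*=\Div\bfphi$ this is the theorem's divergence identity. For the approximation bounds I decompose $\bfphi-\Pif\bfphi = -\Ecal(\Riesz\xi-\xi) + (\bfphi^* - \Pif\bfphi)$: the first piece is controlled by Lemma~\ref{lem:extension}(3) together with $\norm{\Riesz\xi-\xi}_{L^2(\Sigma)}\aleq h^2\norm{\xi}_{W^{2,2}(\Sigma)}$ (1D Aubin--Nitsche duality) and $\norm{\pdx(\Riesz\xi-\xi)}_{L^2(\Sigma)}\aleq h\norm{\xi}_{W^{2,2}(\Sigma)}$ (from \eqref{RieszE} and $P^1$ approximation); the second by the classical Scott--Zhang plus bubble error estimate applied to $\bfphi^*$, transferred to $\Oref$ via $\ALEh$ with the Jacobian bounds of Corollary~\ref{co_est}. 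The $W^{1,2}$-stability estimate follows by entirely analogous arguments. The hardest step will be reconciling the Scott--Zhang estimate for $\bfphi^*$ with the fact that $\Ecal(\Riesz\xi-\xi)$ inherits the piecewise-$P^1$ regularity of $\Riesz\xi$ and thus lacks global $W^{2,2}$ control; the remedy is to choose the cutoff $\phi$ in Lemma~\ref{lem:extension} so that $\Ecal(\Riesz\xi-\xi)$ is supported in a mesh-aligned strip near $\Gamma_S$ on which $\Riesz\xi$ is $P^1$ elementwise, enabling a purely elementwise Bramble--Hilbert argument. A secondary technicality is tracking constants in $\delta=\min\eta_h$ and $L=\norm{\pdx\eta_h}_{L^\infty}$ through $\ALEh$, for which Corollary~\ref{co_est} supplies uniform control.
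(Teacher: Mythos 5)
Your overall architecture differs from the paper's in an interesting way: you build the correct discrete boundary trace into a continuous modification $\bfphi^*=\bfphi+\Ecal(\Riesz\xi-\xi)$ \emph{before} interpolating (using, correctly, that $\intS{\Riesz\xi}=\intS{\xi}$ makes this extension divergence-free), whereas the paper first builds a Fortin operator whose trace is a generic interpolant $\PiS\xi$ and then corrects the boundary values \emph{a posteriori} by solving a discrete Stokes minimization problem, using $\Ecal(\Riesz\xi-\xi)$ only to construct a competitor. Your route for the boundary values is plausible and arguably more direct, and your observation that the extension piece can be handled by first-order interpolation estimates combined with the $O(h)$-smallness of $\Riesz\xi-\xi$ in $W^{1,2}(\Sigma)$ (rather than elementwise Bramble--Hilbert gymnastics) is essentially how the paper closes Part II.

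However, there is a genuine gap in your Fortin step, and it is exactly the difficulty that the paper's construction is designed to overcome. The pressure space $\Qfh$ on $\Omega_{\eta_h}$ is the \emph{push-forward} $\hQfh\circ\ALEhinv$ of piecewise affine functions on the reference mesh. For $\hq=c+a'x_1+ax_2$ on $K$, the paper computes
\begin{equation*}
\nabla q(x_1,x_2)=A^{\eta_h}(x_1)\begin{pmatrix}a'\\ a\end{pmatrix},\qquad
A^{\eta_h}=\begin{pmatrix}1 & -\tfrac{x_2}{\eta_h^2(x_1)}\pdx\eta_h(x_1)\\ 0 & \tfrac{1}{\eta_h(x_1)}\end{pmatrix},
\end{equation*}
which depends on $(x_1,x_2)$ and is \emph{not} constant on $\ALEh(K)$ (the change of variables $(\xrefx,\xrefy)\mapsto(\xrefx,\eta_h\xrefy)$ is not affine). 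Consequently your identity
$\int_{\Omega_{\eta_h}}q\,\Div(\Pif\bfphi-\bfphi^*)=\sum_{K^*}\nabla q|_{K^*}\cdot\int_{K^*}(\bfphi^*-\Pif\bfphi)$
is false, and matching element means with the classical bubble coefficients $\vec\alpha_{K^*}$ does not yield the divergence identity. The paper's $\Pi_2$ instead enforces the \emph{weighted} moment conditions $\beta_K^\rmT\int_{\ALEh(K)}b_K\circ\ALEh\,A^{\eta_h}\dvol=\int_{\ALEh(K)}\bfphi^\rmT A^{\eta_h}\dvol$, proving invertibility of the resulting $2\times2$ matrices with constants depending on $\delta$ and $L$; this is the core of Part I and cannot be replaced by the standard MINI recipe. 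A secondary (repairable) issue is that a Scott--Zhang interpolant defined intrinsically on the Eulerian mesh with intrinsic bubbles $b_{K^*}$ does not land in $\Vfh=\hVfh\circ\ALEhinv$, whose elements are pushed-forward (hence non-polynomial) shape functions; the interpolation must be performed on the reference grid and composed with $\ALEh$, as the paper does with $\Pi_1=\widehat{\Pi}_1(\hbfphi)\circ\ALEh$.
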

The above construction on the variable domain $\Omega_{\eta_h}$ implies the following corollary for the reference domain. 
\begin{Corollary}
\label{cor:interpol1}
Under the assumption of the Theorem \ref{thm:interpol1}, there exists
\begin{align*}
\Pifh: \;  \; \hFSIh \to \;  \hFSIh
\end{align*}
 satisfying for $(\xi,\hbfphi)\in \hFSIh$ and $\gamma <\infty$ that
\begin{align*}
 \norm{ \hbfphi- \Pifh \hbfphi}_{L^\gamma(\Oref)} + h  \norm{\Grad ( \hbfphi  - \Pifh \hbfphi)}_{L^2(\Oref)} &\aleq  h^2 \norm{ \hbfphi}_{W^{2,2}(\Oref)}+ h^2 \norm{\xi}_{W^{2,2}(\Sigma)}, 
\\
\norm{\Pifh \hbfphi}_{L^\gamma(\Oref)} +  \norm{\Grad  \Pifh \hbfphi }_{L^2(\Oref)} &\aleq\norm{\hbfphi}_{W^{1,2}(\Oref)}+\norm{\xi}_{W^{1,2}(\Sigma)},
\end{align*}
where the bounds depend linearly on $\frac{1}{\delta}, L, \frac{L}{\delta}$.
Moreover, we find 
$\Pifh \bfphi (\xx,1) =  (0,\Riesz \xi(\xx)) $ on $\Sigma$ and
\begin{equation*}
\intOref{\hq \nabla \Pifh \hbfphi : \M(\eta_h)} = \intOref{\hq \nabla    \hbfphi:\M(\eta_h)} \quad \forall \; \hq \in \hQfh.
\end{equation*}
\end{Corollary}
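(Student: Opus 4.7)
The plan is to construct $\Pifh$ from $\Pif$ by pull-back through the discrete ALE mapping. For $(\xi,\hbfphi)\in\hFSIh$ with push-forward $\bfphi:=\hbfphi\circ\ALEhinv$, I would set
\[
\Pifh \hbfphi := (\Pif \bfphi)\circ\ALEh.
\]
Because the Eulerian finite element space is defined by push-forward, $\Vfh=\hVfh\circ\ALEhinv$, composition with $\ALEh$ is a linear isomorphism between $\Vfh$ and $\hVfh$, and the coupled spaces $\FSIh$ and $\hFSIh$ correspond in the same way. Hence $\Pifh$ is a well-defined map $\hFSIh\to\hFSIh$, with the Riesz projection of the solid component inherited verbatim from Theorem~\ref{thm:interpol1}.

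For the boundary identity, since $\ALEh(\xrefx,1)=(\xrefx,\eta_h(\xrefx))$, Theorem~\ref{thm:interpol1} gives immediately
\[
\Pifh \hbfphi(\xrefx,1) = (\Pif \bfphi)(\xrefx,\eta_h(\xrefx)) = (0,\Riesz \xi(\xrefx)).
\]
For the weak solenoidality against the matrix weight $\M(\eta_h)$, I would invoke the Piola identity from \eqref{Piola}, which reads $\Gradref\widehat{\bfq}:\M(\eta_h) = \eta_h\,(\Div \bfq)\circ\ALEh$. For $\hq\in\hQfh$ and $q:=\hq\circ\ALEhinv$, a change of variables followed by the divergence property of $\Pif$ stated in Theorem~\ref{thm:interpol1} then yields
\[
\intOref{\hq\,\Gradref\Pifh\hbfphi:\M(\eta_h)} = \intOfh{q\,\Div \Pif \bfphi} = \intOfh{q\,\Div \bfphi} = \intOref{\hq\,\Gradref\hbfphi:\M(\eta_h)}.
\]

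For the quantitative estimates, I would combine a change of variables with the chain rule $\Gradref(\hbfphi-\Pifh\hbfphi) = \Grad(\bfphi-\Pif\bfphi)\circ\ALEh\cdot\Jacob_h$. Under the standing assumptions $\eta_h\ge\delta$ and $\norm{\pdx\eta_h}_{L^\infty}\le L$, the Jacobian determinant $\eta_h$ is uniformly bounded above and below (the upper bound comes from the $W^{1,\infty}$ control of $\eta_h$ in our framework), and both $\Jacob_h$ and $\Jacob_h^{-1}$ are pointwise bounded. Consequently, for any $\gamma\in[1,\infty)$,
\[
\norm{\hbfphi-\Pifh\hbfphi}_{L^\gamma(\Oref)} + h\,\norm{\Gradref(\hbfphi-\Pifh\hbfphi)}_{L^2(\Oref)} \aleq \norm{\bfphi-\Pif\bfphi}_{L^\gamma(\Ofh)} + h\,\norm{\Grad(\bfphi-\Pif\bfphi)}_{L^2(\Ofh)},
\]
and inserting Theorem~\ref{thm:interpol1}, whose right-hand side is already written in terms of $\hbfphi$ and $\xi$, closes the first estimate. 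The stability bound follows identically by replacing the error $\bfphi-\Pif\bfphi$ with $\Pif\bfphi$ on both sides.

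No substantive obstacle arises here: the whole content of the construction lives in Theorem~\ref{thm:interpol1}, and this corollary is a change-of-variables reformulation of it. The only mild care needed is to track that the multiplicative constants depend linearly on $1/\delta$, $L$, and $L/\delta$, which is transparent from the explicit forms of $\ALEh$, $\Jacob_h$, and $\Jacob_h^{-1}$.
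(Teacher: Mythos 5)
Your proposal is correct and is essentially the paper's intended argument: the paper gives no separate proof of the corollary, stating only that the construction on $\Omega_{\eta_h}$ "implies" the reference-domain version, and your pull-back definition $\Pifh\hbfphi=(\Pif\bfphi)\circ\ALEh$ together with the Piola identity for the weighted divergence condition and the bi-Lipschitz change of variables for the norms is exactly what is meant. The only detail worth flagging is that the upper bound on $\eta_h$ (needed for the Jacobian bounds) is supplied by the a priori estimates of Corollary~\ref{co_est} rather than by the hypotheses $\eta_h\geq\delta$, $\norm{\pdx\eta_h}_{L^\infty}\leq L$ alone, which you correctly note in passing.
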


\begin{proof}[Proof of Theorem~\ref{thm:interpol1}]
The proof is split into two parts.

\noindent
{\bf Part I: construction of a Fortin operator on $\Omega_{\eta_h}$.}

We start with the operator $\widehat{\Pi}_1$ which is the piecewise affine interpolation operator on the reference grid constructed in~\cite[Section 2.2]{boffi}, that naturally preserves zero boundary values component wisely. In particular, we may define $\PiS\xi$ as
\[
\widehat{\Pi}_1(\hbfphi)(\xx,1)=\Big( \widehat{\Pi}_1(\hphi_1)(\xx,1),\widehat{\Pi}_1(\hphi_2)(\xx,1) \Big) =: (0,\PiS\xi(\xx)),
\]
which is by the construction of a function in $\Vsh$. Accordingly, we define
\[
\Pi_1\bfphi:=\widehat{\Pi}_1(\hbfphi)\circ\ALEh.
\]
In order to show the necessary bounds, we realize by \cite[equation (2.2.20)]{boffi} and  by the uniform Lipschitz bounds of $\eta_h$ that
\begin{align*}
\norm{\nabla(\Pi_1\bfphi-\bfphi)}_{L^2(\Omega_{\eta_h})}\leq c_{\gamma,L} \norm{\widehat{\Pi}_1\hbfphi-\hbfphi}_{W^{1,2}(\Oref)}\lesssim \min\Big\{\norm{\hbfphi}_{W^{1,2}(\Oref)},h\norm{\hbfphi}_{W^{2,2}(\Oref)}\Big\}
\end{align*}
and
\begin{align*}
\norm{\Pi_1\bfphi-\bfphi}_{L^2(\Omega_{\eta_h})}\lesssim \min\Big\{\norm{\hbfphi}_{L^2(\Oref)},h\norm{\hbfphi}_{W^{1,2}(\Oref)},h^2\norm{\hbfphi}_{W^{2,2}(\Oref)}\Big\}.
\end{align*}
This finishes the construction of $\Pi_1$.

Next, we construct $\Pi_2$. 
We start by recalling that $\hVfh$ is piecewise affine. Hence composed with $\ALEh$ these objects are not any more piecewise affine. But as by our assumptions $\ALEh$ is bi-Lipschitz, all necessary bounds for $\Pi_1$ are directly inherited from the bounds of $\widehat{\Pi}_1$ with an additional dependence on $\delta, L$. Let us focus on a generic reference cell $K\in \gridf$ with its bubble function $b_K\in \mathcal{P}^3(K)\cap W^{1,2}_0(K)$. It is obvious that 
\[
\norm{b_K}_{L^p(K)}\sim h^\frac{2}{p}\text{ and }\norm{\nabla b_K}_{L^p(K)}\sim h^{\frac{2}{p}-1}.
\]
Analog estimates with dependence on $\delta$ and $L$ do also hold for $b_K\circ \ALEh$ as $\eta_h$ is uniformly bounded.

Next, we show how to map the bubble function onto the current domain $\Ofh$ according to the change of geometry. 
Let 
\[
\mathcal{B}_{\eta_h}=\Big\{\sum_K a_K b_K\circ \ALEh : a_K\in \R^2\Big\}
\]
be the set of the potential bubble functions pushed forward by $ \ALEh $. 
Our aim is to find a projector $\Pi_2: W^{1,2}(\Omega_{\eta_h};\R^2)\to \mathcal{B}_{\eta_h}^d =\Big\{\sum_K a_K b_K\circ \ALEh : a_K\in \R^2\Big\}$ that satisfies
\begin{align*}
\sum_K\int_{\ALEh(K)} (\Pi_2 \bfphi-\bfphi)\cdot \nabla q \dvol =0,
\end{align*}
for all $q=\hq\circ \ALEh$, $\hq\in \hVfh$. 
Let $\hq=c+a'x_1+ a x_2$ on $K$ for some constants $a',c,a\in \R$. Then (here we take $\nabla$ as a column vector)
\[
\nabla q(x_1,x_2)= \begin{pmatrix}
a' -\frac{ax_2}{\eta_h^2(x_1)}\pdx  \eta_h(x_1) 
\\
\frac{a}{\eta_h(x_1)}
\end{pmatrix}
=
\begin{pmatrix}
1 & -\frac{x_2}{\eta_h^2(x_1)}\pdx  \eta_h(x_1)
\\
0  & \frac{1}{\eta_h(x_1)}
\end{pmatrix}
\begin{pmatrix}
a' 
\\
a
\end{pmatrix}=:A^{\eta_h}(x_1) \begin{pmatrix}
a' 
\\
a
\end{pmatrix}
\]
and thus 
\[
\bfphi\cdot \nabla q = (
\bfphi_1,
\bfphi_2
)
A^{\eta_h} \begin{pmatrix}
a' 
\\
a
\end{pmatrix}.
\]
This allows us to define $\Pi_2(\bfphi)|_{\ALEh(K)}=\sum_{K\in \gridf }\beta_K b_K\circ\ALEh$, where $\beta_K\in \R^2$ is determined by the equation
\[
\beta_K^\rmT\int_{\ALEh(K)} b_K\circ\ALEh A^{\eta_h} \dvol=\int_{\ALEh(K)} \bfphi^\rmT A^{\eta_h} \dvol.
\]
It is easy to check that $A^{\eta_h}$ is bounded from above and below by positive constants and 
\[
\frac{c_1h^{2}}{\norm{\eta_h}_\infty}\leq \int_{\ALEh(K)} b_K\circ\ALEh(\xx,x_2) \frac{1}{\eta_h(\xx)} \dvol \leq \frac{c_2h^{2}}{\delta}
\]
Consequently the matrix $M_K=\int_{\ALEh(K)}b_K\circ\ALEh A^{\eta_h}\dvol$ is invertible with
\[
\abs{M^{-1}_K}\leq \frac{1}{\abs{\det(\int_{\ALEh(K)} b_K \circ\ALEh  A^{\eta_h} \dvol)}}\int_{\ALEh(K)} b_K \circ\ALEh \abs{A^{\eta_h}}\dvol \leq ch^{-2}, 
\]
where $c$ depends linearly on $L$ and $\frac{L}{\delta}$.
Further, we find 
\[
\abs{\beta_K}\leq \abs{M^{-1}_K}\norm{A^{\eta_h} }_\infty\norm{\bfphi}_{L^1(\ALEh(K))}\leq ch^{-2}\norm{\bfphi}_{L^1(\ALEh(K))}\leq ch^{-\frac{2}{p}}\norm{\bfphi}_{L^p(\ALEh(K))}
\]
where in the last step we have used Jensen's inequality.
Using the above estimate, we have
\begin{align*}
\norm{\nabla \Pi_2 \bfphi}_{L^p(\ALEh(K))}\leq \abs{\beta_K}\norm{\nabla (b_K\circ\ALEh )}_{L^p(\ALEh(K))}\leq ch^{-1}\norm{\bfphi}_{L^p(\ALEh(K))},
\end{align*}
and
\begin{align*}
\norm{ \Pi_2 \bfphi}_{L^p(\ALEh(K))}\leq \abs{\beta_K}\norm{(b_K\circ\ALEh )}_{L^p(\ALEh(K))}\leq c\norm{\bfphi}_{L^p(\ALEh(K))},
\end{align*}
which allows us to follow the arguments at the end of~\cite[Section 8.4]{boffi} to gain the expected estimates and bounds for the operator:
\[
\PiF(\bfphi):=\Pi_1(\bfphi)+\Pi_2( {\bfphi}-\Pi_1(\bfphi)).
\]

\noindent
{\bf Part II: a Fortin operator with appropriate boundary values}


By construction $\PiF\bfphi(\xx,\eta_h(\xx))= \Pi_1\bfphi(\xx,\eta_h(\xx))=:(0,\PiS\xi(\xx))$, with $\PiS$ being an interpolation operator for $\Vsh$ with natural stability properties and error bounds. 
The problem is that, unlike $\Riesz$, the operator $\PiS$ does not have the required second-order estimates (in particular Lemma~\ref{lem:w22} does not hold). Nevertheless by the orthogonality of the error for $\Riesz$ and the estimates of first order for $\PiS\xi(\xx)$, we find that
\begin{align}
\label{eq:boundary1}
\norm{\pdx(\Riesz\xi-\PiS\xi)}_{L^2(\Sigma)} \leq  \norm{\pdx (\xi-\PiS\xi)}_{L^2(\Sigma)}\leq ch \norm{\Lapx  \xi}_{L^2(\Sigma)}.
\end{align}
The desired projector turns out to be the solution to a discrete Stokes problem: 
{We derive it for $\eta_h$, $\bfphi$ and $\xi_h$ fixed by minimizing
\[
 \int_{\Omega_{\eta_h}}\abs{\nabla(\psi_h-\bfphi)}^2\dvol 
\]
over the class of all $\psi_h\in \Vfh$, with $\psi_h(\xx,\eta_h(\xx))=(0,\xi_h(\xx))$ on $\Sigma$, which satisfy the discrete divergence-free property: $\int_{\Ofh } \Div(\psi_h)\cdot q\dvol$ for all $q\in \Qfh$.
The minimizer is then defined as $\Pif \bfphi$. The respective Euler-Lagrange equation becomes the discrete solution to an approximate Stokes problem
\[
\intOfh{ \nabla (\Pif \bfphi-\bfphi) \cdot \nabla \psi_h }=0,
\]
for all $\psi_h\in \Vfh$ with zero boundary values and which are discretely divergence-free.}
The error of the projector is of two kinds. The first is the error stemming from the prescribed boundary values, and the second is the discretization error. 
For the first, we take the linear divergence-free extension $
\Ecal(\Riesz\xi-\xi)$ given by Lemma~\ref{lem:extension}. Now we can take  $\psi_h=\PiF(\bfphi+\Ecal(\Riesz\xi-\xi))$ as competitor in the minimization. Indeed, as $(0,\xi(\xx))=\bfphi(\xx,\eta_h(\xx))$, we find that $\PiF(\bfphi+\Ecal(\Riesz\xi-\xi))(\xx,\eta_h(\xx))=\PiF(\Ecal(\xi))(\xx,\eta_h(\xx))=(0,\PiS(\xi)(\xx)$ in $\Sigma$. This implies (as the projector is the minimizer) that
\begin{align*}
\norm{\nabla(\Pif\bfphi-\bfphi)}_{L^2(\Ofh)}^2&\leq  \norm{\nabla(\PiF(\bfphi+\Ecal(\Riesz\xi-\xi))-\bfphi)}_{L^2(\Ofh)}^2
\\
&\leq c\norm{\Grad \bfphi - \Grad \PiF \bfphi}_{L^2(\Ofh)}^2+c\norm{\nabla(\PiF\Ecal(\Riesz\xi-\xi))}_{L^2(\Ofh)}^2.
\end{align*}
The first term is estimated directly by the properties of the Fortin operator. The second one is by the stability of the Fortin operator, Lemma~\ref{lem:extension} and \eqref{eq:boundary1}.

\end{proof}
\begin{Remark}[On the importance of the proper choice of an interpolation operator]
The deep reason why the interpolation has to be solved as a discrete PDE, is that the solid matter and the fluid matter have totally different properties, even so they are coupled. Our scheme follows the direct path that is also used in the existence theory, where already in the approximation the coupling and the different matters are simultaneously (monolithically) solved. The fact that this uniform (and linear) approximation does indeed converge properly can only be revealed by imitating the coupling between two solutions of independent PDEs. This imitation is exactly performed by solving a discrete boundary value problem.
\end{Remark}

The last step for the interpolation of $\vu$ is the correction of the divergence due to the change of variables. For that, we use another analytic tool developed in \cite[Theorem 3.3]{KamSchSpe22}. It is the so-called universal Bogovskij operator. Universal it is, because it is independent of the particular (Lipschitz) geometry. We cite the important estimate in the following lemma.
\begin{Lemma}
\label{lem:bogovskij}
There is an operator $\Bcal:\{f\in L^p(\Omega_{\eta_h}):\intOfh{f} =0 \} \to W^{1,p}_0(\Omega_{\eta_h})$ for any $\Omega_{\eta_h}$ for $1<p<\infty$ that is a given subgraph, with $\min_\Sigma \eta_h>\delta$ and $\norm{\partial_x \eta_h}_{L^\infty}\leq L$, such that the following hold:
\begin{enumerate}
\item $\Div\Bcal(f)=f$.
\item $\norm{\Bcal(f)}_{W^{1,p}(\Omega)}\leq \norm{f}_{L^p}$.
\end{enumerate}
\end{Lemma}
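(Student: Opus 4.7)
The plan is to exploit the fact that the assumptions $\min_\Sigma \eta_h > \delta$ and $\norm{\pdx \eta_h}_{L^\infty} \leq L$ make $\Omega_{\eta_h}$ uniformly Lipschitz, with Lipschitz character depending only on $(\delta,L)$. The construction of $\Bcal$ then follows the classical Bogovskij strategy, but one must track the uniformity of all estimates in the family parameterized by $\eta_h$.

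First, I would produce a covering of $\Omega_{\eta_h}$ by a uniformly finite collection of open sets $\{U_i\}_{i=1}^{N}$, each such that $U_i \cap \Omega_{\eta_h}$ is star-shaped with respect to a ball of radius comparable to $\delta$. Because $\eta_h$ is $L$-Lipschitz and bounded below by $\delta$, one can choose the $U_i$ as cubes of a fixed size $\sim \delta/(1+L)$ arranged in the reference strip $\Sigma\times(0,\|\eta_h\|_\infty)$, with cardinality $N$ bounded in terms of $L_1,\delta,L$. The star-shape parameter of each $U_i\cap \Omega_{\eta_h}$ is then uniformly controlled. Subordinate to this cover I would take a partition of unity $\{\chi_i\}$ with $\norm{\Grad \chi_i}_\infty \leq c(\delta,L)$.

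Second, given $f$ with $\intOfh{f}=0$, I would split $f=\sum_i (\chi_i f - m_i)+\sum_i m_i$, where $m_i$ are piecewise constants chosen so that each $\chi_i f - m_i$ has zero mean on $U_i\cap \Omega_{\eta_h}$; the residuals $m_i$ can in turn be redistributed along a chain of overlapping patches (a tree induced by the covering) so that each redistribution has zero mean on the overlap of two neighbouring patches. On each star-shaped piece, the classical Bogovskij singular integral operator applies and yields a $W^{1,p}_0$ solution of the divergence equation with bounds depending only on the star-shape parameter and $p$. Summing these local solutions produces $\Bcal(f)$, and the mean-value redistribution step gives only uniformly bounded contributions by a finite-dimensional argument governed by $N(\delta,L)$. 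The boundary values are zero by construction since each local Bogovskij vanishes on the boundary of its patch, and the partition is chosen so that patches touching $\partial \Omega_{\eta_h}$ produce pieces whose Bogovskij correction vanishes there.

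The main obstacle is the uniformity of the bounds across the entire family of admissible $\eta_h$. Pointwise the construction is standard, but the Calder\'on–Zygmund estimates for the local Bogovskij operators on star-shaped sets depend on the inner/outer radius ratio, while the partition-of-unity and tree-redistribution estimates depend on $N$ and on $\norm{\Grad \chi_i}_\infty$. All of these must be controlled purely in terms of $\delta$ and $L$ (and $p$), independently of $\eta_h$ itself; this is precisely what the uniform Lipschitz character of the subgraph guarantees, and verifying it rigorously is the technical core of the argument carried out in \cite{KamSchSpe22}.
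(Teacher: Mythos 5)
The paper does not actually prove this lemma: it is quoted verbatim from \cite[Theorem 3.3]{KamSchSpe22} as a black box (the ``universal'' Bogovskij operator), so there is no internal proof to compare against. Your proposal is instead the classical construction --- cover the uniformly Lipschitz subgraph by patches that are star-shaped with respect to balls of radius $\sim\delta/(1+L)$, localize $f$ with a partition of unity, correct the local means by redistributing them along a chain/tree of overlapping patches, and apply the explicit Bogovskij integral operator on each star-shaped piece --- and this route is sound and would deliver the statement with a constant $c(p,\delta,L,\mathrm{diam}\,\Omega_{\eta_h})$ in item 2 (the constant $1$ in the lemma is evidently a typographical omission of $c$). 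Two caveats are worth recording. First, the cardinality $N$ of your covering, and hence your constant, depends not only on $\delta$, $L$ and $L_1$ but also on an upper bound for $\eta_h$ (equivalently the diameter of $\Omega_{\eta_h}$), which is not among the hypotheses you list; in the context of the paper this is harmless because $\norm{\eta_h}_{L^\infty}\aleq 1$ by Corollary~\ref{co_est}, but it should be stated. Second, the operator of \cite{KamSchSpe22} is ``universal'' in the stronger sense that the operator itself is defined independently of the particular subgraph, exploiting the vertical structure of $\Omega_{\eta_h}$, whereas your construction yields a domain-dependent operator with merely uniform bounds; for the use made of the lemma in Theorem~\ref{thm:projection-velocity} only the uniform bound matters, so this difference is immaterial here. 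As you concede, the quantitative core (uniform Calder\'on--Zygmund bounds for the local operators and uniform control of the chain redistribution) is deferred to the literature, so the proposal is an outline of a correct strategy rather than a complete proof --- which is consistent with the paper's own decision to cite rather than prove the result.
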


\noindent
The above lemma and Corollary \ref{cor:interpol1} lead to the final statement of this section.  
\begin{Theorem}
\label{thm:projection-velocity}
Let $\Ofh\subset \R^2$ be a subgraph and let the assumptions of Theorem \ref{thm:interpol1} hold. 
Then there exists 
\begin{align*}
\PiF: \;  \; \hFSI\to \;  \hFSIh, 
\end{align*}
 satisfying for $(\xi,\vu)\in \FSI$ and $\gamma <\infty$ that  
\begin{align}
\label{iu}
\begin{aligned}
 \norm{ \hvu- \PiF \hvu}_{L^\gamma(\Oref)} + h  \norm{\Grad ( \hvu- \PiF \hvu)}_{L^2(\Oref)} &\aleq  h^2 \norm{\hvu}_{W^{2,2}(\Oref)}+ h^2 \norm{\xi}_{W^{2,2}(\Sigma)}+ h\norm{\eta-\eta_h}_{W^{1,2}(\Sigma)}, 
\\
\norm{\PiF \hvu}_{L^\gamma(\Oref)} +  \norm{\Grad  \PiF \hvu}_{L^2(\Oref)} &\aleq\norm{\hvu}_{W^{1,2}(\Oref)}+\norm{\xi}_{W^{1,2}(\Sigma)} + \norm{\eta-\eta_h}_{W^{2,2}(\Sigma)},
\end{aligned}
\end{align}
where the bounds depend linearly on $\frac{1}{\delta}, L, \frac{L}{\delta}$.
Moreover, we find 
$\PiF \vu (\xx,1) =  (0,\Riesz \xi(\xx)) $ on $\Sigma$ and
\begin{equation}\label{P3}
\int_{\Oref} {\hq \nabla \PiF \vu : \M(\eta_h)} \, dx= 0
\quad \forall \; \hq \in \hQfh. 
\end{equation}
\end{Theorem}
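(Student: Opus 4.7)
The plan is to construct $\PiF \vu$ in four steps: transport $\hvu$ from the reference grid to the discrete Eulerian domain $\Ofh$ via $\ALEh^{-1}$, correct the residual divergence through a Bogovskij-type field (Lemma~\ref{lem:bogovskij}), apply the Fortin projector $\Pif$ of Theorem~\ref{thm:interpol1}, and pull back to $\Oref$ via $\ALEh$. The conceptual obstacle is that while $\vu$ is incompressible on $\Omega_\eta$ (equivalently $\nabla \hvu : \M(\eta) = 0$), the discrete scheme requires the $\M(\eta_h)$-analogue; the mismatch is of order $\eta - \eta_h$ and must be absorbed before the Fortin machinery can be invoked.

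Concretely, set $\tilde{\vu}_h := \hvu \circ \ALEh^{-1}$ on $\Ofh$. The two Piola identities $\Divref(\eta \Jacob^{-1}) = 0 = \Divref(\eta_h \Jacob_h^{-1})$ together with $\Div \vu = 0$ yield
\[ (\Div \tilde{\vu}_h) \circ \ALEh = \frac{1}{\eta_h}\nabla \hvu : \bigl(\M(\eta_h) - \M(\eta)\bigr). \]
A short argument using the kinematic coupling and $\Div \vu = 0$ gives $\int_{\Ofh} \Div \tilde{\vu}_h = \int_\Sigma \xi = 0$, so Lemma~\ref{lem:bogovskij} produces $\bfb \in W_0^{1,2}(\Ofh)$ with $\Div \bfb = -\Div \tilde{\vu}_h$ and $\|\bfb\|_{W^{1,2}} \aleq \|\Div \tilde{\vu}_h\|_{L^2}$. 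The field $\bar{\vu}_h := \tilde{\vu}_h + \bfb$ is then divergence-free on $\Ofh$ and retains the trace $\xi \er$ on $\Gamma_S(\eta_h)$, since $\bfb$ has zero trace. I define $\PiF \vu := (\Pif \bar{\vu}_h) \circ \ALEh$.

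The kinematic condition $\PiF \vu(\cdot, 1) = (0, \Riesz \xi)$ is inherited directly from Theorem~\ref{thm:interpol1}. For the discrete incompressibility \eqref{P3}, a second use of $\Divref(\eta_h \Jacob_h^{-1}) = 0$ yields the change-of-variables identity $\int_\Oref \hq \, \nabla \hv : \M(\eta_h) \, d\xref = \int_{\Ofh} q \, \Div v \, dx$ for any $v \in W^{1,2}(\Ofh)$ and $\hv = v \circ \ALEh$; applied to $v = \Pif \bar{\vu}_h$ this reduces via the Fortin preservation stated in Theorem~\ref{thm:interpol1} to $\int_{\Ofh} q \Div \bar{\vu}_h = 0$.

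For the error bounds I split
\[ \hvu - \PiF \vu = (\tilde{\vu}_h - \Pif \tilde{\vu}_h) \circ \ALEh - (\Pif \bfb) \circ \ALEh. \]
The first summand is bounded by the Fortin error of Theorem~\ref{thm:interpol1} applied to $(\xi, \hvu)$, producing the $h^2 (\|\hvu\|_{W^{2,2}} + \|\xi\|_{W^{2,2}})$ contributions. The second is bounded via the stability of $\Pif$ and the Bogovskij estimate $\|\bfb\|_{W^{1,2}} \aleq \|\Div \tilde{\vu}_h\|_{L^2}$; the stability half of \eqref{iu} (with the $\|\eta - \eta_h\|_{W^{2,2}}$ term) then follows from $\|\Div \tilde{\vu}_h\|_{L^2} \aleq \|\nabla \hvu\|_{L^2} \|\M(\eta_h) - \M(\eta)\|_{L^\infty}$ combined with the one-dimensional embedding $W^{2,2}(\Sigma) \hookrightarrow W^{1,\infty}(\Sigma)$. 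The main technical obstacle is the sharp rate $h\|\eta - \eta_h\|_{W^{1,2}}$: exploiting that both $\M$ and $\M_h$ are row-wise divergence-free in $\xref$ (by the Piola identity), one may integrate by parts in the defining integral for $\Div \tilde{\vu}_h$ to shift one derivative off the structure displacement and onto the test function. This produces a refined $W^{-1,2}$-style bound $\|\Div \tilde{\vu}_h\|_{W^{-1,2}(\Ofh)} \aleq \|\vu\|_{L^\infty} \|\eta - \eta_h\|_{W^{1,2}(\Sigma)}$ whose combination with a duality Bogovskij estimate and the $L^\gamma$-approximation properties of $\Pif$ yields the missing factor of $h$.
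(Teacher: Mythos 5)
Your construction is essentially identical to the paper's: transport $\hvu$ to $\Ofh$ via $\ALEh^{-1}$, restore exact incompressibility with the Bogovskij operator $\Bcal$ of Lemma~\ref{lem:bogovskij} applied to the divergence mismatch $\tfrac{1}{\eta_h}\nabla\hvu:\bigl(\M(\eta_h)-\M(\eta)\bigr)$ (well-defined since $\int_\Sigma\xi=0$), and then apply the Fortin-type projector of Theorem~\ref{thm:interpol1}/Corollary~\ref{cor:interpol1}; your $\bar{\vu}_h=\tilde{\vu}_h+\bfb$ is exactly the paper's $\bfphi=\tilde{\vu}_h-\Bcal(\Div\tilde{\vu}_h)$, and the trace, discrete-divergence, and splitting arguments all match. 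The one place you go beyond the paper is the final duality/$W^{-1,2}$ refinement aimed at the term $h\norm{\eta-\eta_h}_{W^{1,2}(\Sigma)}$: the paper's own proof only establishes the cruder bound $\norm{\hbfphi-\hvu}_{W^{1,2}(\Oref)}\aleq\norm{\eta-\eta_h}_{W^{2,2}(\Sigma)}\norm{\nabla\hvu}_{L^2(\Oref)}$ and does not derive the sharper third term of \eqref{iu}, so your sketch addresses a genuine gap in the paper rather than deviating from it, though it would need the negative-norm Bogovskij estimate (not stated in Lemma~\ref{lem:bogovskij}) to be made rigorous.
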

\begin{proof}
The proof takes the function $\bfphi:=\vu\circ \ALEh\circ \ALE^{-1} - \Bcal(\Div (\vu\circ \ALEh\circ \ALE^{-1}))$.
By the Gauss theorem we note that
\[
\intOfh{\Div (\vu\circ \ALEh\circ \ALE^{-1}) }=0,
\]
hence $\Bcal$ is well defined and so
$
\Div \bfphi=0\text{ on }\Ofh.
$
Further
\[
\Div (\vu\circ \ALEh\circ \ALE^{-1})=\Big(\frac{\eta}{\eta_h}-1\Big)\partial_2 u_2\circ \ALEh\circ \ALE^{-1} +\partial_1\Big(\frac{\eta}{\eta_h}\Big)x_2\partial_2 u_1\circ \ALEh\circ \ALE^{-1},
\]
which implies as $W^{1,\infty}(\Sigma)\subset W^{2,2}(\Sigma)$ that
\[
\norm{\Div \vu\circ \ALEh\circ \ALE^{-1}}_{L^2(\Omega_{\eta_h})}\leq c\norm{\eta-\eta_h}_{W^{2,2}(\Sigma)}\norm{\nabla\hvu}_{L^2(\Oref)}.
\]
Hence by Lemma~\ref{lem:bogovskij} and a change of variable we find
\[
\norm{\hbfphi-\hvu}_{W^{1,2}(\Oref)}\leq c\norm{\eta-\eta_h}_{W^{2,2}(\Sigma)}\norm{\nabla\hvu}_{L^2(\Oref)}\text{ and } \hbfphi(\xx,1)= \xi(\xx).
\]
Then, we define $\PiF \hvu = \Pifh{\hbfphi}$ for which now the result follows from the previous estimates and Corollary~\ref{cor:interpol1}.
\end{proof}

\section{Error estimates}
In this section, we study the error between the numerical solution $(\hp_h,\hvu_h,\xi_h,\eta_h)$ of \textbf{Scheme-R} and its target smooth solution $(\hp,\hvu,\xi,\eta)$. Here we assume the existence of a smooth solution of \eqref{pde_f}--\eqref{pde_bdc} in the following class 
\begin{equation}\label{STClass}
\left\{
\begin{aligned}
& \eta>\underline{\eta}, \eta \in L^2(0,T; W^{3,2}(\Sigma))\cap  W^{2,2}(0,T; W^{2,2}(\Sigma)),
\\&  
\hvu \in   
L^\infty (0,T; W^{1,2}(\Oref;\R^2)) \cap 
L^2 (0,T; W^{2,2}(\Oref;\R^2))
\\&
\pdt  \hvu \in  L^2 (0,T;W^{1,2}( \Oref;\R^2 )) , \; 
\\&
\hp \in L^\infty( 0,T; L^2(\Oref) ), \; \Grad p \in L^2 ( (0,T)\times \Oref ). 
\end{aligned}
\right.
\end{equation}

\subsection{The time projection}
Very relevant in this highly nonlinear coupled system is to choose a set of appropriate time-value $t_k^{\TS}$, $k=1,\dots,N_T$, at which we will compare the continuous equation with its numerical approximation. 
For a given $\TS$ and $k$, we denote 
\[
(\hp^k,\hvu^k,\xi^k,\eta^k):=(\hp,\hvu,\xi,\eta)(t_k^{\TS}).
\]
Then, according to our smoothness assumption \eqref{STClass}, we may choose the value $t_k^{\TS}\in [k\TS,(k+1)\TS)$ in such a way that
\begin{align*}
&\TS\Big(\norm{\hvu(t_k^{\TS})}_{W^{2,2}(\Oref)}^2+
\norm{\partial_t\hvu(t_k^{\TS})}_{W^{1,2}(\Oref)}^2+\norm{\partial_t^2\hvu(t_k^{\TS})}_{L^2(\Oref)}^2 + 
\\ & \quad \; \norm{\eta(t_k^{\TS})}_{W^{2,3}(\Sigma)}^2+\norm{\xi(t_k^{\TS})}_{W^{2,4}(\Sigma)}^2+\norm{\partial_t\xi(t_k^{\TS})}_{W^{2,2}(\Sigma)}^2\Big)
\\
&\leq 
\int_{k\TS}^{(k+1)\TS}\Big(\norm{\hvu(t)}_{W^{2,2}(\Oref)}^2+
\norm{\partial_t\hvu(t)}_{W^{1,2}(\Oref)}^2+\norm{\partial_t^2\hvu(t)}_{L^2(\Oref)}^2 
\\& \qquad \qquad \quad + \norm{\eta(t)}_{W^{2,3}(\Sigma)}^2+\norm{\xi(t)}_{W^{2,2}(\Sigma)}^2\norm{\partial_t\xi(t))}_{W^{2,2}(\Sigma)}^2\Big)\, dt,
\end{align*}  
which is possible to find by the continuity of the integral, if the right-hand side is bounded. In particular, we find that
\begin{align}
\label{eq:tk}
\begin{aligned}
&\TS\sum_{k=1}^{N_T}\Big(\norm{\hvu^k}_{W^{2,2}(\Oref)}^2+
\norm{\partial_t\hvu^k}_{W^{1,2}(\Oref)}^2+\norm{\partial_t^2\hvu^k}_{L^2(\Oref)}^2 + \norm{\eta^k}_{W^{2,2}(\Sigma)}^2+\norm{\partial_t\xi^k}_{W^{2,2}(\Sigma)}^2\Big)
\\
&\leq \int_{0}^{T}\big(\norm{\hvu}_{W^{2,2}(\Oref)}^2+
\norm{\partial_t\hvu}_{W^{1,2}(\Oref)}^2+\norm{\partial_t^2\hvu}_{L^2(\Oref)}^2 + \norm{\eta}_{W^{2,2}(\Sigma)}^2+\norm{\partial_t\xi)}_{W^{2,2}(\Sigma)}^2\big)\, dt
\end{aligned}
\end{align}
Actually, this right-hand side summarizes our regularity assumptions on the solution. All the above regularity requirements do follow from these assumptions.
\begin{Remark}[On the regularity assumptions]\label{rmq_reg}
When comparing the assumptions on the smooth solution with the theory for the heat/wave equation (or the 2D/Navier-Stokes equation), one realizes that we have the same regularity assumptions for the fluid as in the non-moving case. For the plate, which also deduces the domain essentially one more time-derivative has to be assumed, as nonlinear equations of a similar type can be expected.
\end{Remark}

\subsection{Main result}
Before introducing the main result, let us denote the following error terms for each time step $k \in\{1,\dots,N_T\}$.
\begin{equation}\label{ers}
\begin{aligned}
&e_p^k 	=   \hp_h^k  -\hp^k   =(\hp_h^k  	- \Piq \hp^k)  	+ (\Piq \hp^k -  \hp^k )      	=: \delta_p^k 	+ I_p^k , \\ 
&\eu^k  	= \hvu_h^k -\hvu^k  =(\hvu_h^k 	- \PiF \hvu^k) 	+ (\PiF \hvu^k -  \hvu^k )   		=: \delta_\vu^k 	+ I_\vu^k ,   \\ 
&e_\xi^k   	=\xi_h^k -\xi^k   	= (\xi_h^k 	- \Riesz  \xi^k )  	+ (\Riesz  \xi^k -  \xi^k)    		=: \delta_\xi^k 	+ I_\xi^k ,\\
&e_\eta^k  	= \eta_h^k -\eta^k	=(\eta_h^k	- \Riesz \eta^k)	+ (\Riesz \eta^k - \eta^k ) 	=: \delta_\eta^k	+ I_\eta^k ,  \\ 
&e_\zeta^k  	= \zeta_h^k -\zeta^k	=(\zeta_h^k	+ \Laph \Riesz \eta^k)	+ (-\Laph \Riesz \eta^k - \zeta^k )	=: \delta_\zeta^k	+ I_\zeta^k ,  \\ 
\end{aligned} 
\end{equation}
where $\zeta_h = - \Laph \eta_h$ and $\zeta = - \Lapx \eta$. 
Now we are ready to present the main result of the paper. 
\begin{tcolorbox}
\begin{Theorem}[Convergence rate]\label{theorem_conv_rate}
Let $\{(\hp_h^k, \hvu_h^k,\xi_h^k,\eta_h^{k+1})\}_{k=1}^{N_T}$ be the solution of {\bf Scheme-R} \eqref{SKM_ref}, and let  $(\hvu,\hp,\xi,\eta)(t)$, $t\in(0,T)$, be a strong  solution of \eqref{pde_f}--\eqref{pde_bdc} belonging to the class \eqref{STClass}. 
Then for any $m \in \{1,\cdots,N_T\}$ it holds
\begin{align*}
& \frac12 \vrf \intOref{|e_\vu^m|^2 \eta_h^m}   + \frac12  \intSB{ \vrs \abs{e_\xi^m}^2 +  \gamma_1 |\pdx e_\eta^{m+1} |^2 +  \gamma_2 \abs{ e_\zeta^{m+1}}^2}
\\& +  2 \mu   \TS \summ \intOref{\left| \Grad e_\vu^k (\Jacob_h^k)^{-1}\right|^2} 
+  \gamma_3 \vrs \TS \summ \intS{\abs{\pdx \delta_\xi^k}^2} 
\aleq \TS^2 +h^2 . 
\end{align*}
In particular, we have the following convergence rates
\begin{multline}\label{CR}
 \norm{e_\vu}_{L^\infty(0,T;L^2(\Oref;\R^2))}
+\norm{e_\xi}_{L^\infty(0,T;L^2(\Sigma))}
+\norm{\pdx e_\eta}_{L^\infty(0,T;L^2(\Sigma))}
+\norm{e_\zeta}_{L^\infty(0,T;L^2(\Sigma))}
\\
+\norm{\Grad e_\vu}_{L^2((0,T)\times \Oref;\R^{2\times 2})}
+\gamma_3 \norm{\pdx e_\xi}_{L^2((0,T)\times \Sigma;\R^2)}
\aleq \TS +h .
\end{multline}
\end{Theorem}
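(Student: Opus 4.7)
The plan is to derive a discrete energy identity for the ``discrete-error'' tuple $(\delta_p^k,\delta_\vu^k,\delta_\xi^k,\delta_\eta^{k+1},\delta_\zeta^{k+1})$ from \eqref{ers} and then close it by a discrete Gronwall argument. The key structural observation is that, by the boundary compatibility $\PiF\hvu(\xx,1)=(0,\Riesz\xi(\xx))$ guaranteed by Theorem~\ref{thm:projection-velocity} and the matched discrete divergence constraint~\eqref{P3}, the triple $(\delta_p^k,\delta_\vu^k,\delta_\xi^k)$ belongs to $\hQfh\times\hFSIh$ and is therefore an admissible test function in \eqref{SKM_ref}. I would pick the time nodes $t_k^\TS$ according to \eqref{eq:tk}, evaluate the smooth solution in the weak formulation \eqref{wf_ref} at these nodes against the same triple, and subtract the resulting identity from the discrete scheme at time step $k$. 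Rearranging the left-hand side exactly as in the proof of Theorem~\ref{Thm_Sta} (i.e. using the discrete Reynolds transport formula \eqref{DRT}, the algebraic identity \eqref{algab}, and the discrete Laplace identity \eqref{S1}) produces $\PDt E_{\mathrm{err}}^k + 2\mu\,\intOref{|\Grad\delta_\vu^k(\Jacob_h^k)^{-1}|^2\eta_h^k} + \gamma_3\|\pdx\delta_\xi^k\|_{L^2(\Sigma)}^2$ plus non-negative numerical dissipation, where $E_{\mathrm{err}}^k$ is the discrete energy appearing on the left of the claimed estimate.

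All remaining terms sit on the right-hand side and split into three families. First, \emph{projection defects} involving $I_\vu,I_\xi,I_\eta,I_\zeta,I_p$: these are bounded respectively by Theorem~\ref{thm:projection-velocity}, \eqref{RieszE}--\eqref{pro_eR}, and \eqref{proe}, each of order $h$ (or $h^2$) times smooth-solution norms from \eqref{STClass}. Second, \emph{temporal consistency errors} arising from $\pdt\hvu-\PDt\hvu$, $\pdt\eta-\PDt\eta$, and the linearization $\hvu_h^{k*}=2\hvu_h^{k-1}-\hvu_h^k$; by Taylor expansion these are controlled by $\TS$ times the time-norms summarized in \eqref{eq:tk}. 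Third, \emph{geometric mismatches} produced when replacing $(\M,\Jacob,\eta,\pdt\eta)$ by $(\Mh,\Jacob_h,\eta_h,\PDt\eta_h)$ in the weak formulation: these reduce to expressions in $\eta-\eta_h=\delta_\eta+I_\eta$ and $\xi-\xi_h=\delta_\xi+I_\xi$, the $I$-parts contributing $O(h)$ and the $\delta$-parts being Gronwall-ready. Each contribution is estimated by H\"older combined with the uniform bounds from Corollary~\ref{co_est} (critically $\hvv_h\in L^2_tL^{q_1}_x$, $\hvu_h\in L^\infty_tL^2_x\cap L^2_tL^{q_1}_x$, $\xi_h\in L^2_tL^\infty_x$) and the Sobolev embeddings \eqref{Sob}; dangerous terms containing $\Grad\delta_\vu^k$ are absorbed into the diffusion via Young's inequality. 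A discrete Gronwall argument then yields the stated $\TS^2+h^2$ bound on the $\delta$-quantities, and the full convergence rate \eqref{CR} follows by triangle inequality together with the interpolation estimates.

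The principal obstacles will be two. First, the nonlinear convection combined with geometric mismatch: the convective term in \eqref{SKM_ref2} involves $\hvv_h^{k-1}$ and $(\Jacob_h^k)^{-1}$, while the continuous one involves $\hvv$ and $\Jacob^{-1}$. The antisymmetric splitting used in \eqref{wf2_ref}--\eqref{SKM_ref2} ensures, via \eqref{euler2} and Reynolds, that the pure $\delta_\vu$ self-term cancels up to harmless pieces, but the mixed terms force one to decompose $\hvv-\hvv_h^{k-1}$ into a projection piece $I_\vu$, a previous-step discrete error $\delta_\vu^{k-1}$, a time-step increment, and a mesh-velocity geometric piece $\hvw-\hvw_h^k$, and to place each in a Lebesgue norm compatible with the $\Grad\delta_\vu^k$ factor it multiplies so that Young's inequality can absorb it into the diffusion. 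Second, the bi-Laplacian contribution is delicate: since $\Vsh$ cannot see $\Lapx^2$ pointwise, the match between the discrete identity \eqref{S1} and its continuous analogue must be brokered through the Riesz projection, with the residual controlled by \eqref{pro_eR}. This is exactly why the Riesz projection was adopted for the plate, and why $\PiF$ had to be constructed in Section~5 via a discrete Stokes problem reproducing $\Riesz\xi$ on $\Sigma$; without this precise boundary-matching of the fluid and structure projectors, the boundary terms produced upon subtraction of \eqref{wf_ref} from \eqref{SKM_ref} would not close.
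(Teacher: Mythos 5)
Your proposal follows essentially the same route as the paper's proof: form the error equation by subtracting \eqref{wf2_ref} from \eqref{SKM_ref2}, test with $(\delta_\vu^k,\delta_\xi^k)$ (admissible precisely because $\PiF$ reproduces $\Riesz\xi$ on the interface and satisfies the discrete divergence constraint \eqref{P3}, which kills the $\delta_p$ contribution), reproduce the stability-proof algebra on the left-hand side, estimate the consistency/projection/geometric residuals with the uniform bounds of Corollary~\ref{co_est} and Young absorption into the diffusion, and close with discrete Gr\"onwall plus the triangle inequality. Your identification of the two delicate points (the convective term with mismatched $\hvv_h^{k-1}$, $(\Jacob_h^k)^{-1}$ versus $\hvv$, $\Jacob^{-1}$, and the bi-Laplacian handled through the Riesz projection and \eqref{pro_eR}) matches the paper's $R_3^k$ and $G_s$/\eqref{IM6} treatments.

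One step is missing. All the uniform bounds you invoke from Corollary~\ref{co_est} — and the positivity of $\eta_h^k$ that makes the discrete energy terms coercive — are conditional on the non-degeneracy assumption \eqref{noc}, which a priori is only guaranteed on a short interval $[0,T_0]$. The theorem claims the rate for any $T$ on which the smooth solution satisfies $\eta\geq\underline{\eta}$, so you must propagate the lower bound on $\eta_h$. The paper does this by a continuation argument: on $[0,T_0]$ the established estimate gives $\norm{e_\eta(T_0)}_{L^\infty}\leq c(\TS+h)$, hence $\eta_h(T_0)\geq\underline{\eta}-c(\TS+h)$, which for $\TS,h$ small re-validates the hypothesis of \cite[Lemma 5]{SS_FSI} and yields a new interval $[T_0,T_1]$ with the same lower bound $\underline{\eta}/2$; iterating covers all of $[0,T]$ as $\TS,h\to 0$. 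Without this bootstrap your Gr\"onwall constant is only justified for small times.
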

\end{tcolorbox}

\begin{proof}
First, we subtract the weak formulation \eqref{wf2_ref} from the numerical scheme \eqref{SKM_ref2} and get 
\begin{equation}\label{EEQ}
\begin{aligned}
& \intOref{  \vrf (\eta_h^k \PDt \euk  + \frac12 \PDt \eta_h^k \eu ^{k*}  ) \cdot \hbfphi  }  
+ 2 \mu \intOref{ \big( \Grad \euk  (\Jacob_h^k)^{-1} \big)^\rmS: \big(\Grad \hbfphi (\Jacob_h^k)^{-1} \big)\eta_h^k  } 
\\& 
+ \vrs\intS{\PDt e_\xi^k \psi}  
+a_s(e_\eta^{k+1},e_\zeta^{k+1},e_\xi^k,\psi)
= - \sum_{i=1}^7 R^k_i( \hbfphi, \psi),
\end{aligned}
\end{equation}
where 
\begin{equation}\label{RS}
\begin{aligned}
&R^k_1( \hbfphi, \psi)=  \vrf \intOref{\big( e_\eta^k \pdt \hvu^k+ \eta_h^k (\PDt \hvu^k -\pdt\hvu^k ) \big) \cdot \hbfphi},
\\ & 
R^k_2( \hbfphi, \psi) = \frac12\vrf \intOref{\left( (e_\xi^{k-1} - \TS \PDt \xi^k )\hvu^{k*}- \TS \pdt \eta^k \PDt \hvu^k \right) \cdot \hbfphi},
\\& 
R^k_3( \hbfphi, \psi)=\frac12 \vrf \intOref{ \Big( \hbfphi \cdot   (\Grad \euk ) 
-\euk   \cdot   (\Grad \hbfphi) \Big)
 \cdot  (\Jacob_h^k)^{-1} \hvv_h^{k-1}  \eta_h^k}
\\& \quad 
+ \frac12 \vrf \intOref{ \Big( \hbfphi \cdot (\Grad \hvu^k) - \hvu^k \cdot (\Grad\hbfphi) \Big) \cdot \left(    (\Jacob_h^k)^{-1} \hvv_h^{k-1}  \eta_h^k-   (\Jacob^k)^{-1}\hvv^{k} \eta^k \right)  }, 
 \\& 
R^k_4( \hbfphi, \psi) 
=\intOref{  e_p^k \Grad \hbfphi : \Mhk } 
+ \intOref{ \hp^k \Grad \hbfphi : \big(\Mhk -  \Mk\big)} ,
\\& 
R^k_5( \hbfphi, \psi)=2 \mu \intOrefB{\big( \Grad \hvu (\Jacob_h^k)^{-1} \big)^\rmS: (\Grad \hbfphi (\Jacob_h^k)^{-1} )\eta_h^k
-\big( \Grad \hvu (\Jacob)^{-1} \big)^\rmS: (\Grad \hbfphi (\Jacob^k)^{-1} )\eta^k} , 
\\& 
R^k_6( \hbfphi, \psi)=\vrs\intS{(\PDt \xi^k -\pdt \xi^k  ) \psi} ,
\\&
R^k_7( \hbfphi, \psi)= - \gamma_1 \intS{\Lapx (\eta^{k+1} -\eta^k) \; \psi}
- \gamma_2 \intS{\Lapx (\zeta^{k+1} -\zeta^k)\; \psi}.
\end{aligned}
\end{equation}
The precise justification of \eqref{EEQ} is given in Appendix~\ref{app_ee1}. 
By setting $(\hbfphi,\psi) = (\delta_\vu^k, \delta_\xi^k)$ in \eqref{EEQ} and sum up from $k=1$ to $m$ we derive 
\begin{equation}\label{EEQ2}
\begin{aligned}
-  \TS \summ  \sum_{i=1}^7 R^k_i( \delta_\vu^k, \delta_\xi^k)
 &=  \TS \summ  \intOref{  \vrf (\eta_h^k \PDt \euk  + \frac12 \PDt \eta_h^k \eu ^{k*}  ) \cdot \delta_\vu^k  }  
\\ & + 2 \mu  \TS \summ  \intOref{ \big( \Grad \euk  (\Jacob_h^k)^{-1} \big)^\rmS: \big(\Grad \delta_\vu^k (\Jacob_h^k)^{-1} \big)\eta_h^k  } 
\\& 
+  \TS \summ  \vrs\intS{\PDt e_\xi^k \delta_\xi^k}  
+ \TS \summ  a_s(e_\eta^{k+1},e_\zeta^{k+1},e_\xi^k,\delta_\xi^k).
\end{aligned}
\end{equation}
Further, applying the algebraic equalities \eqref{algab} and \eqref{IM6} to the above right-hand-side, we get (similarly as was performed for the stability estimate)
\begin{equation}\label{REI1}
\begin{aligned}
& -  \TS \summ  \sum_{i=1}^7 R^k_i( \delta_\vu^k, \delta_\xi^k)
=
 \TS \summ \intOref{  \vrf \big(\eta_h^k \PDt (\delta_\vu^k+I_\vu^k)  + \frac12 \PDt \eta_h^k (\delta_\vu^{k*}+I_\vu^{k*})  \big) \cdot \delta_\vu^k  }  
\\& \quad 
+ 2 \mu  \TS \summ \intOref{   \big( \Grad (\delta_\vu^k+I_\vu^k)  (\Jacob_h^k)^{-1} \big)^\rmS: (\Grad \delta_\vu^k (\Jacob_h^k)^{-1} ) \eta_h^k }  
\\& \quad
+  \TS \summ \vrs\intS{\PDt (\delta_\xi^k+I_\xi^k) \;\delta_\xi^k}  
+ \TS \summ a_s(e_\eta^{k+1},e_\zeta^{k+1},e_\xi^k,\delta_\xi^k)
\\& = \delta_E^m - \delta_E^0  + \TS \summ  D_{phys}^k+ \TS \summ  D_{num}^k + G_f+ G_s,
\end{aligned}
\end{equation}
where 
\begin{align*}
\delta_E^k =&   \intOref{ \frac12 \vrf \eta_h^k  |\delta_{\vu}^k|^2  } 
  + \frac12 \intSB{  \vrs |\delta_\xi^k|^2+ \gamma_1 |\pdx \delta_\eta^{k+1}|^2  
	+ \gamma_2|\delta_\zeta^{k+1}|^2 },
\\
 \delta_D^k = &   2 \mu \intOref{ \eta_h^k |\big(\Grad \delta_{\vu}^k (\Jacob_h^k)^{-1}\big)^\rmS|^2 } 
  +  \gamma_3 \intS{|\pdx \delta_\xi^k|^2},
\\
D_{num}^k = &
 \frac{\TS}{2}\vrf  \intOref{  \eta_h^{k-1}  |\PDt \delta_{\vu}^k|^2  } 
 +	 \frac{\TS}{2} \intSB{\vrs|\PDt \delta_\xi^k|^2+ \gamma_1|\PDt \pdx \delta_\eta^{k+1}|^2+\gamma_2 |\PDt \delta_\zeta^{k+1}|^2} \geq 0,
\\
G_f =&
\TS \summ  \intOref{  \vrf \big(\eta_h^k \PDt I_\vu^k  + \frac12 \PDt \eta_h^k I_\vu^{k*} \big) \cdot \delta_\vu^k  }  
\\&  + 2 \mu \TS \summ   \intOref{   \big( \Grad I_\vu^k  (\Jacob_h^k)^{-1} \big)^\rmS: (\Grad \delta_\vu^k (\Jacob_h^k)^{-1} ) \eta_h^k }  ,
\\ G_s=&
\gamma_1 \TS \summ \intS{\pdx \delta_\eta^{k+1} \pdx (\PDt \eta^{k+1} - \pdt\eta^k )} -
\gamma_2 \TS \summ\intS{ \delta_\zeta^{k+1} \Lapx (\PDt \eta^{k+1} - \pdt\eta^k )}
\\&  + \TS \summ  \intS{\vrs \PDt I_\xi  \delta_\xi^k}.
\end{align*}

Next, we reformulate \eqref{REI1} in the following form. 
\begin{equation}\label{REI}
\delta_E^m - \delta_E^0
+  \TS \summ  \delta_D^k
   +\TS\summ  D_{num}^k
=  - \TS \summ \sum_{i=1}^7 R^k_i -G_f - G_s.
\end{equation}
Then, by Young's inequality, H\"older's inequality, the interpolation error in Theorem~\ref{thm:projection-velocity}, and the uniform bounds \eqref{ests}, we estimate the right-hand-side of the above equation as
\begin{equation}\label{res}
\Abs{ \TS \summ \sum_{i=1}^7 R^k_i + G_f + G_s} 
\aleq \TS^2 + h^2  
+ c \TS \summ \delta_E^m 
+ 2 \alpha  \mu   \TS \summ \intOref{\left| \Grad \delta_\vu^k (\Jacob_h^k)^{-1}\right|^2 \eta_h^k}
,
\end{equation}
see Appendix \ref{app_res}. 
Further, substituting  the above estimate into \eqref{REI} and noticing the initial error $\delta_E^0=0$, we get (using also the lower bound of $\eta,\eta_h$) that
\begin{align*}
\delta_E^m  + (1-\alpha) 2 \mu \TS \summ \intOref{\left| \Grad \delta_\vu^k (\Jacob_h^k)^{-1}\right|^2 \eta_h^k } 
+  \gamma_3 \vrs \TS \summ \intS{\abs{\pdx \delta_\xi}^2} 
\aleq \TS^2 +h^2 + \TS \summ \delta_E^k. 
\end{align*}
By choosing any $\alpha\in(0,1)$ and using Gr\"onwall's inequality, we get
\begin{align*}
\delta_E^m  +  \TS \summ \delta_D^k
\aleq \TS^2 +h^2 . 
\end{align*}
Recalling the interpolation errors (Theorem~\ref{thm:projection-velocity} and \eqref{pro_eR}) and the regularity of the strong solution \eqref{STClass} we get
\begin{align*}
& \frac12 \vrf \intOref{|I_\vu^m|^2 \eta_h^m}   + \frac12  \intSB{ \vrs \abs{I_\xi^m}^2 +  \gamma_1 |\pdx I_\eta^{m+1} |^2 +  \gamma_2 \abs{ I_\zeta^{m+1} }^2}
\\& +     \TS \summ \left( 2 \mu\intOref{\left| \Grad I_\vu^k (\Jacob_h^k)^{-1}\right|^2 \eta_h^k } 
+    \gamma_3 \vrs \intS{\abs{\pdx I_\xi^k}^2} \right)
 \aleq  h^2.
\end{align*}
Finally, due to the triangular inequality, we sum up the previous two estimates and get
\begin{equation}\label{t0est}
    \begin{aligned}
& \frac12 \vrf \intOref{|e_\vu^m|^2 \eta_h^m}   + \frac12  \intSB{ \vrs \abs{e_\xi}^2 +  \gamma_1 |\pdx e_\eta^{m+1} |^2 +  \gamma_2 \abs{ e_\zeta^{m+1}}^2}
\\& +     \TS \summ \left( 2 \mu\intOref{\left| \Grad e_\vu^k (\Jacob_h^k)^{-1}\right|^2 \eta_h^k } 
+  \gamma_3 \vrs  \intS{\abs{\pdx \delta_\xi^k}^2}  \right)
\\& \aleq \TS^2 +h^2,  
\end{aligned}
\end{equation}
which provides the proof for small $T\leq T_0$ such that   \eqref{noc} is valid. 

Next, we show that $T$ can be arbitrarily large if $\eta\geq \underline{\eta}$ on $[0,T]$. 
We start with a fixed $T_0$ such that $\eta_h\geq \frac{\underline{\eta}}{2}$, this can be found by \cite[Lemma 5]{SS_FSI}.
Then, by the above estimate \eqref{t0est}, we know that 
\[
\norm{e_\eta(T_0)}_{L^\infty}\leq c(\TS+h),
\]
where the constant $c$ depends on the lower bound $\frac{\underline{\eta}}{2}$. Recalling $\eta(t) \geq \underline{\eta}$ we know that
\[
\eta_h(T_0)\geq \underline{\eta}-c(\TS+h),
\]
which actually is much larger than $\frac{\underline{\eta}}{2}$, if $\TS,h$ are small enough. 
Hence by \cite[Lemma 5]{SS_FSI}, there is a $T_1>T_0$, such that 
\[
\eta_h( T_1 )\geq \frac{2\underline{\eta}}{3}-c(\TS+h)\geq \frac{\underline{\eta}}{2}
\]
for $\TS$ and $h$ small enough, where $T_1$ depends only on the initial energy of the problem but is independent of $\TS$ and $h$. 
Hence we can repeat the above argument with the same lower bound $\frac{\underline{\eta}}{2}$. It implies for $\TS,h\to 0$ that this procedure can be repeated arbitrarily many times, thus \eqref{noc} hold for any large $T$. 
\end{proof}

\section{Numerical experiments}
\label{sec:num}
In this section, we define a problem that we use to study the convergence rate of the linear semi-implicit Scheme-R \eqref{SKM_ref} on a reference domain $\Oref$. This semi-implicit scheme is then compared with the nonlinear fully implicit scheme corresponding to the weak form \eqref{wf_ref}.
Both numerical implementations are described in detail in Appendices~\ref{implement_semiimplicit} and \ref{implement_fullyimplicit}.

\subsection{Problem description}
In our experiments, the domain $\Oref$ is a rectangle of dimensions $2\times1$ with periodic boundary conditions in the $\xx-$direction, i.e. the solution on the left boundary coincides with the solution on the right boundary. On the bottom we have no-slip boundary conditions. At $t=0$, we prescribe zero initial conditions for all unknowns. Moreover, we set $\vrf=\vrs=1$, $\gamma_1=\gamma_2=0.1$, and $\gamma_3=0$ since we wish to solve a problem with a non-dissipating elastic shell. The flow is driven by the external force $g$ periodic in $x_1$ direction. The force is applied up to $t=0.2$ such that a big amplitude of the structure deformation is produced. Next, the force is turned off and the system is left to relax. The  force $g$ reads
\begin{equation*}
g=\left\{\begin{matrix}
200t\sin(2\pi x)&t\leq0.2,\\
0&t>0.2.
\end{matrix}\right.
\end{equation*}
Snapshots of the simulation are given in Figure~\ref{fig:snapshots}.

\begin{figure}[!htbp]
\begin{center}
\includegraphics[width=5.4cm]{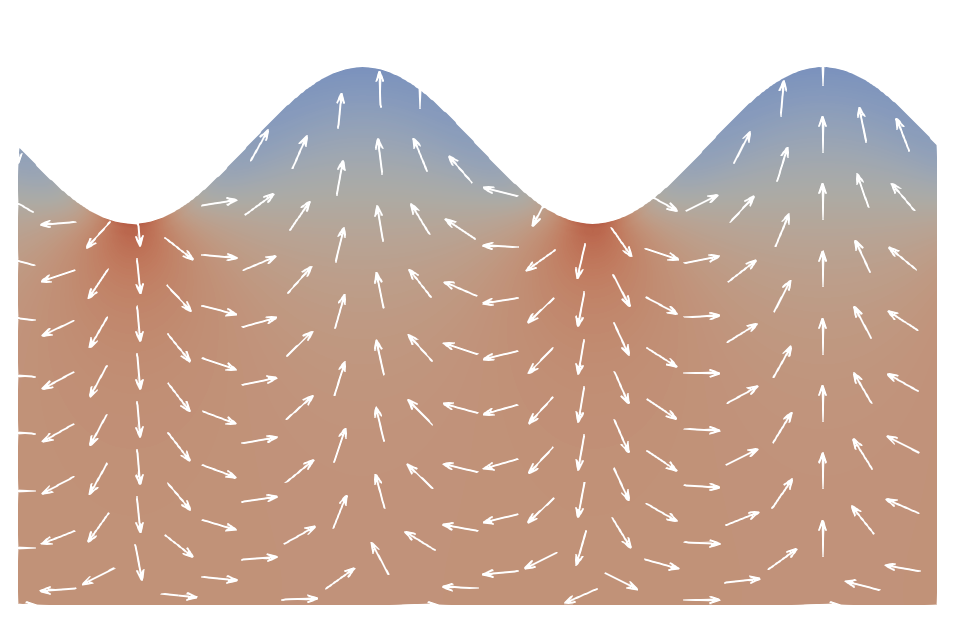}\hspace*{0.4cm}\includegraphics[width=5.4cm]{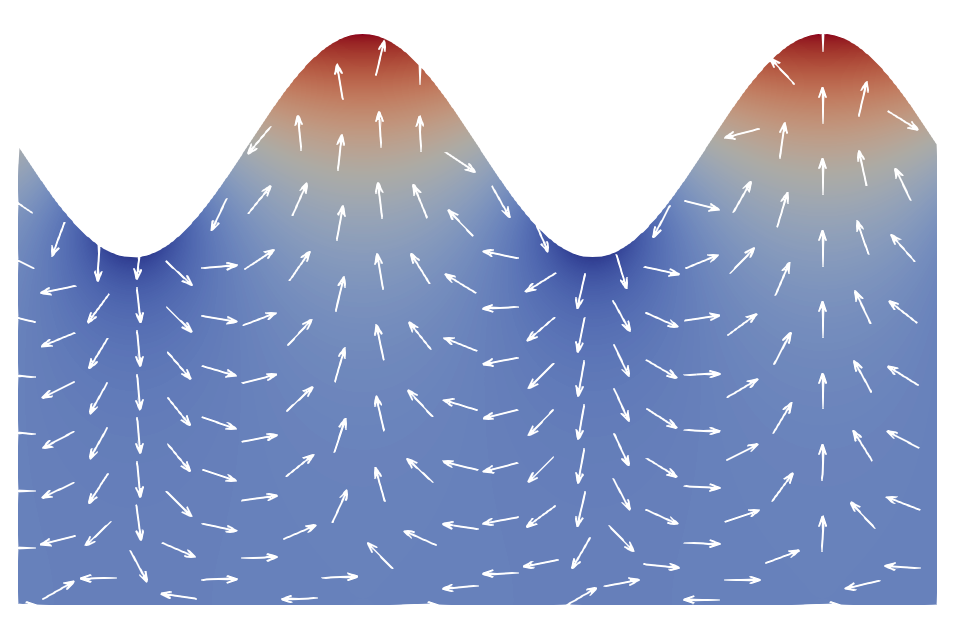}\hspace*{0.4cm}\includegraphics[width=5.4cm]{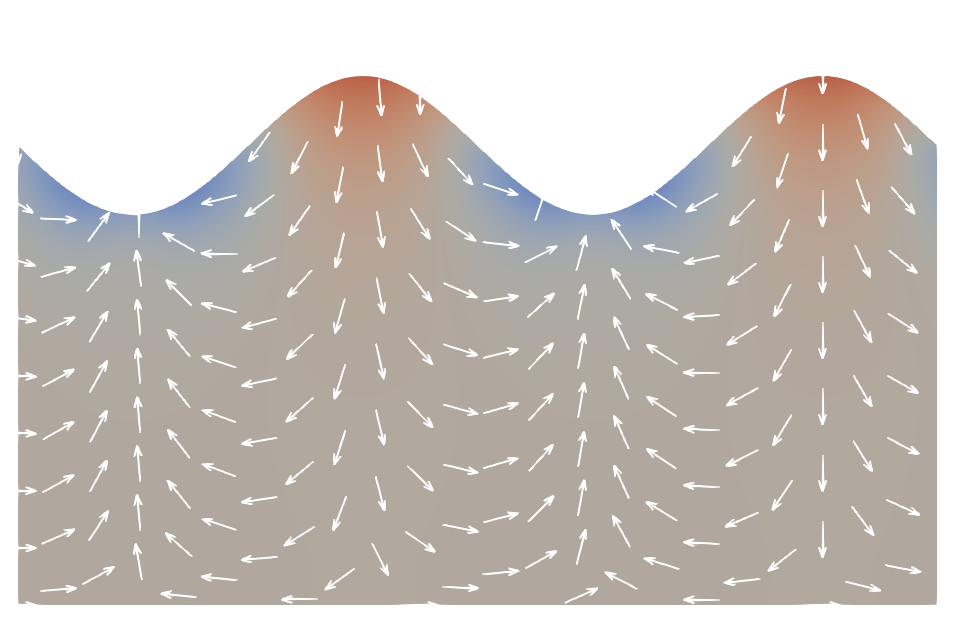}\\[3pt]
(a)\hspace*{5.4cm}(b)\hspace*{5.4cm}(c)\\[5pt]
\includegraphics[width=5.4cm]{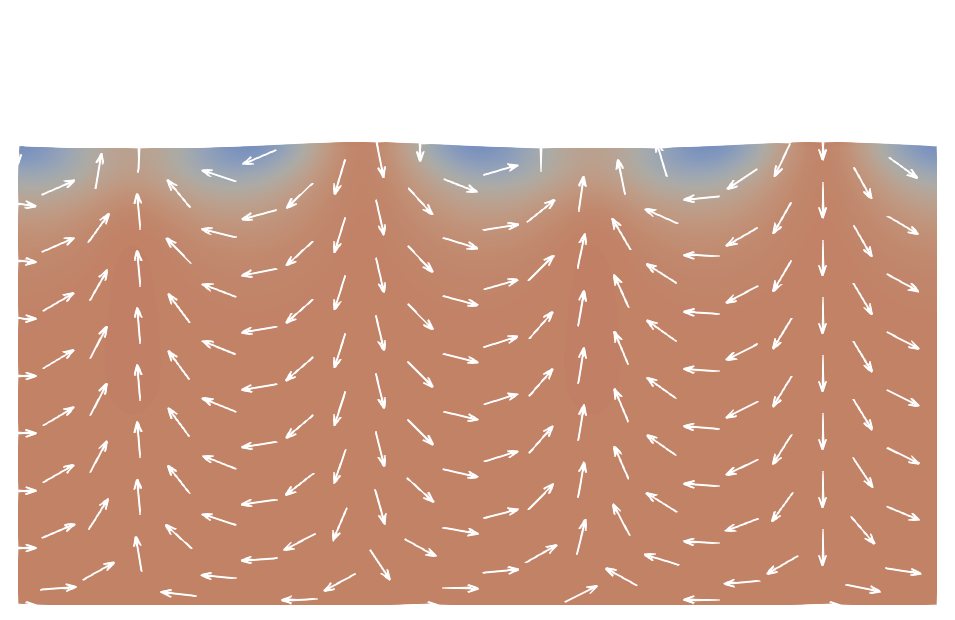}\hspace*{0.4cm}\includegraphics[width=5.4cm]{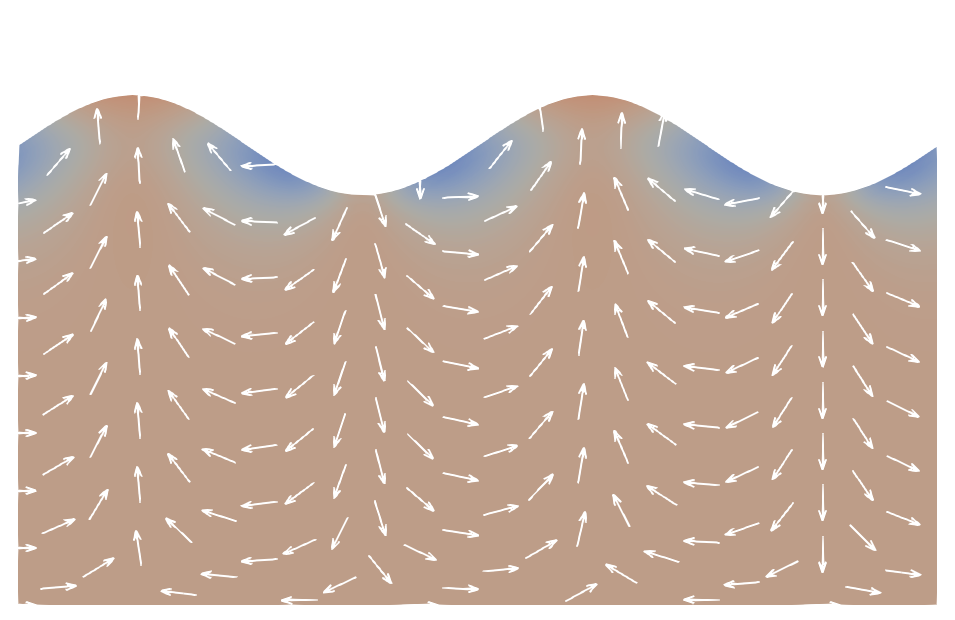}\hspace*{0.4cm}\includegraphics[width=5.4cm]{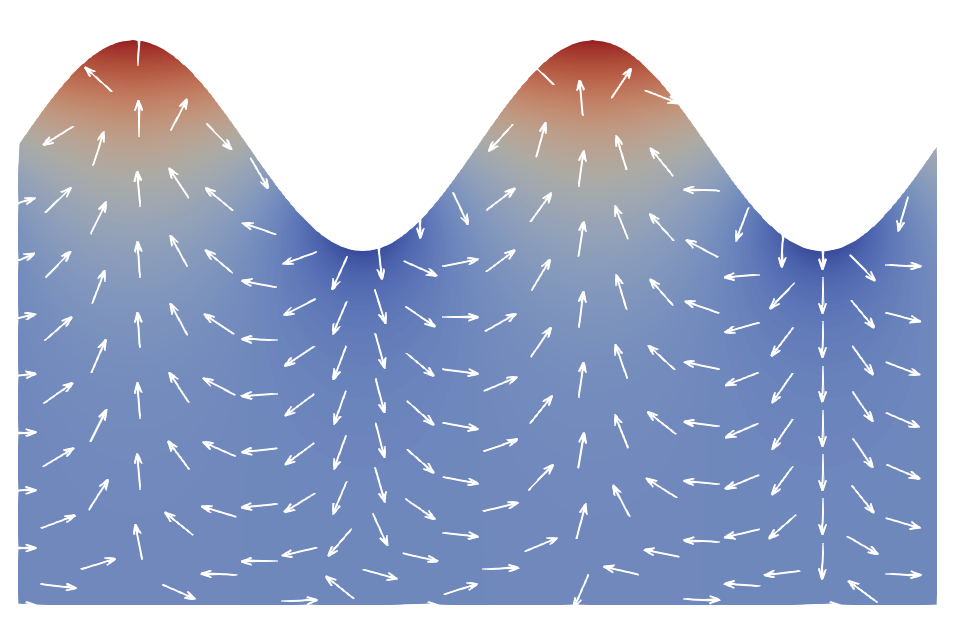}\\[3pt]
(d)\hspace*{5.4cm}(e)\hspace*{5.4cm}(f)
\caption{Snapshots of the simulation: (a) $t=0.2$ end of loading, (b) $t=0.26$ maximum of amplitude, (c) $t=0.35$, (d) $t=0.41$, (e) $t=0.45$, (f) $t=0.53$ another maximum. The color scale depicts pressure, arrows show the direction of the velocity field.}\label{fig:snapshots}
\end{center}
\end{figure}


\subsection{Convergence rates}
The simulation is computed for $t\in[0,T], T=1.0$ for six different time steps $\TS=5\times10^{-3}, 2.5\times10^{-3}, 1.25\times10^{-3}, 6.25\times10^{-4}, 3.125\times10^{-4}$ and $\TS_{\rm min}=1\times10^{-4}$ on six different meshes with the mesh sizes $h=2.83\times10^{-1}, 1.41\times10^{-1}, 7.07\times10^{-2}, 3.54\times10^{-2}, 1.77\times10^{-2}$ and $h_{\rm min}=8.84\times10^{-3}$. The solution with the finest mesh (corresponding to 410\,880 degrees of freedom (dofs) in Step 1 of the implementation of Scheme-R, see Appendix~\ref{implement_semiimplicit}) and the smallest time step is used as the reference solution.

The solutions for different mesh refinements and the smallest time step are compared to the reference solution, specifically, we record all summands of the right-hand-side of Theorem~\ref{theorem_conv_rate}, these are:
$\|e_\vu\|_{L^\infty(L^2)}$, $\|e_{\xi}\|_{L^\infty(L^2)}$, $\|e_{\eta}\|_{L^\infty(L^2)}$, $\|\nabla e_{\eta}\|_{L^\infty(L^2)}$, $\|e_{\zeta}\|_{L^\infty(L^2)}$ and $\|\nabla e_\vu\|_{L^2(L^2)}$.
The convergence with respect to the mesh size $h$ is given in the Table~\ref{tab:bilaplace_long_g0_h}. The graphs depicting the convergence rate with respect to the mesh size $h$ are shown in Figure~\ref{fig:conv_h}. The convergence with respect to the time step $\TS$ is provided in Table~\ref{tab:bilaplace_long_g0_t} and Figure~\ref{fig:conv_t}.  

\pgfkeys{
/pgf/number format/.cd,
sci,
sci zerofill,
sci generic={mantissa sep=\times,exponent={10^{#1}}}}

\pgfplotstableset{
create on use/uLiL2quotbil/.style={create col/expr={\thisrow{uLiL2}/2.45e-02}},
create on use/uLiL2quot/.style={create col/expr={\thisrow{uLiL2}/4.46e-02}},
create on use/xiLiL2quotbil/.style={create col/expr={\thisrow{xiLiL2}/3.91e-02}},
create on use/xiLiL2quot/.style={create col/expr={\thisrow{xiLiL2}/9.02e-02}},
create on use/etaLiL2quotbil/.style={create col/expr={\thisrow{etaLiL2}/1.64e-03}},
create on use/etaLiL2quot/.style={create col/expr={\thisrow{etaLiL2}/2.90e-03}},
create on use/gradetaLiL2quotbil/.style={create col/expr={\thisrow{gradetaLiL2}/2.00e-02}},
create on use/gradetaLiL2quot/.style={create col/expr={\thisrow{gradetaLiL2}/3.23e-02}},
create on use/LapetaLiL2quotbil/.style={create col/expr={\thisrow{LapetaLiL2}/3.06e-02}},
create on use/LapetaLiL2quot/.style={create col/expr={\thisrow{LapetaLiL2}/5.75e-02}},
create on use/graduL2L2quotbil/.style={create col/expr={\thisrow{graduL2L2}/1.73e-01}},
create on use/graduL2L2quot/.style={create col/expr={\thisrow{graduL2L2}/2.84e-01}},
create on use/gradxiL2L2quotbil/.style={create col/expr={\thisrow{gradxiL2L2}/7.69e-02}},
create on use/gradxiL2L2quot/.style={create col/expr={\thisrow{gradxiL2L2}/1.32e-01}},
create on use/uLiL2quotlong/.style={create col/expr={\thisrow{uLiL2}/1.20e+00}},
create on use/xiLiL2quotlong/.style={create col/expr={\thisrow{xiLiL2}/2.84e-00}},
create on use/etaLiL2quotlong/.style={create col/expr={\thisrow{etaLiL2}/2.22e-01}},
create on use/gradetaLiL2quotlong/.style={create col/expr={\thisrow{gradetaLiL2}/1.41e-00}},
create on use/LapetaLiL2quotlong/.style={create col/expr={\thisrow{LapetaLiL2}/9.22e-00}},
create on use/graduL2L2quotlong/.style={create col/expr={\thisrow{graduL2L2}/1.23e+01}},
create on use/timeuLiL2quotbil/.style={create col/expr={\thisrow{uLiL2}/3.49e-03}},
create on use/timeuLiL2quot/.style={create col/expr={\thisrow{uLiL2}/1.25e-02}},
create on use/timexiLiL2quotbil/.style={create col/expr={\thisrow{xiLiL2}/8.50e-03}},
create on use/timexiLiL2quot/.style={create col/expr={\thisrow{xiLiL2}/3.02e-02}},
create on use/timeetaLiL2quotbil/.style={create col/expr={\thisrow{etaLiL2}/2.44e-04}},
create on use/timeetaLiL2quot/.style={create col/expr={\thisrow{etaLiL2}/5.71e-04}},
create on use/timegradetaLiL2quotbil/.style={create col/expr={\thisrow{gradetaLiL2}/1.53e-03}},
create on use/timegradetaLiL2quot/.style={create col/expr={\thisrow{gradetaLiL2}/3.59e-03}},
create on use/timeLapetaLiL2quotbil/.style={create col/expr={\thisrow{LapetaLiL2}/9.63e-03}},
create on use/timeLapetaLiL2quot/.style={create col/expr={\thisrow{LapetaLiL2}/2.25e-02}},
create on use/timegraduL2L2quotbil/.style={create col/expr={\thisrow{graduL2L2}/1.23e-02}},
create on use/timegraduL2L2quot/.style={create col/expr={\thisrow{graduL2L2}/3.59e-02}},
create on use/timegradxiL2L2quotbil/.style={create col/expr={\thisrow{gradxiL2L2}/1.12e-02}},
create on use/timegradxiL2L2quot/.style={create col/expr={\thisrow{gradxiL2L2}/3.54e-02}},
create on use/timeuLiL2quotlong/.style={create col/expr={\thisrow{uLiL2}/2.55e-01}},
create on use/timexiLiL2quotlong/.style={create col/expr={\thisrow{xiLiL2}/5.50e-01}},
create on use/timeetaLiL2quotlong/.style={create col/expr={\thisrow{etaLiL2}/4.23e-02}},
create on use/timegradetaLiL2quotlong/.style={create col/expr={\thisrow{gradetaLiL2}/2.66e-01}},
create on use/timeLapetaLiL2quotlong/.style={create col/expr={\thisrow{LapetaLiL2}/1.67e-00}},
create on use/timegraduL2L2quotlong/.style={create col/expr={\thisrow{graduL2L2}/1.61e-00}},
columns/h/.style={int detect,column type=c, fixed zerofill, precision=2, column type/.add={|}{|}, column name=$h$},
columns/dt/.style={int detect,column type=c, fixed zerofill, precision=2, column type/.add={|}{|}, column name=$\TS$},
columns/uLiL2/.style={int detect,column type=c, fixed zerofill, precision=2, column type/.add={}{|}, column name=$\|e_\vu\|_{L^\infty(L^2)}$},
columns/xiLiL2/.style={int detect,column type=c, fixed zerofill, precision=2, column type/.add={}{|}, column name=$\|e_{\xi}\|_{L^\infty(L^2)}$},
columns/etaLiL2/.style={int detect,column type=c, fixed zerofill, precision=2, column type/.add={}{|}, column name=$\|e_{\eta}\|_{L^\infty(L^2)}$},
columns/gradetaLiL2/.style={column type=c, fixed zerofill, precision=2, column type/.add={}{|}, column name=$\|\nabla e_{\eta}\|_{L^\infty(L^2)}$},
columns/LapetaLiL2/.style={int detect,column type=c, fixed zerofill, precision=2, column type/.add={}{|}, column name=$\|e_{\zeta}\|_{L^\infty(L^2)}$},
columns/graduL2L2/.style={int detect,column type=c, fixed zerofill, precision=2, column type/.add={}{|}, column name=$\|\nabla e_\vu\|_{L^2(L^2)}$},
columns/gradxiL2L2/.style={int detect,column type=c, fixed zerofill, precision=2, column type/.add={}{|}, column name=$\|\nabla e_{\xi}\|_{L^2(L^2)}$},
empty cells with={--}, 
every head row/.style={before row=\hline,after row=\hline},
every last row/.style={after row=\hline},
}

\newcommand{\Convergence}[3]{
\pgfplotsextra{
\pgfmathsetmacro{\ax}{0.5}
\pgfmathsetmacro{\ay}{1}
\pgfmathsetmacro{\bx}{1.5}
\pgfmathsetmacro{\by}{(3.0^#3)}
\pgfmathsetmacro{\slope}{(3.0^#3)}
\coordinate (a) at (axis direction cs:\ax*#1,\ay*#2);
\coordinate (b) at (axis direction cs:\bx*#1,\by*#2);
\draw (a) -- (b) (a) -| (b) node [pos=0.25,anchor=north] {\small 1} node [pos=0.95,anchor=west] {\small order #3};
}
}

\begin{table}[!htbp]
  \begin{center}{
    \pgfplotstabletypeset[font={\small}]{h_errors_bilaplace_long_gamma3_0.txt}}
\caption{Convergence of errors with mesh refinement (using fixed time step $\TS=\TS_{\rm min}$); reference solution: $h_{\rm min}=8.84\times 10^{-3}$, $\TS_{\rm min}=10^{-4}$.}
\label{tab:bilaplace_long_g0_h}
\end{center}
\end{table}
\vspace*{-3mm}
\begin{table}[!htbp]
  \begin{center}{
    \pgfplotstabletypeset[font={\small}]{t_errors_bilaplace_long_gamma3_0.txt}}
\caption{Convergence of errors with time step refinement (using fixed mesh size $h=h_{\rm min}$); reference solution: $h_{\rm min}=8.84\times 10^{-3}$, $\TS_{\rm min}=10^{-4}$.}
\label{tab:bilaplace_long_g0_t}
\end{center}
\end{table}

\begin{figure}[!htbp]
\begin{center}
\begin{tikzpicture}[scale=0.85]
\begin{loglogaxis}[
    width=10.5cm,
    ylabel={Errors to reference solution},
    xlabel={$h$},
    legend style={at={(1.4,0.5)},anchor=east},
    ]
\addplot+[mark=o, thick, black] table[x=h,y={uLiL2quotlong}] {h_errors_bilaplace_long_gamma3_0.txt};
    \addlegendentry{$\|e_\vu\|_{L^\infty(L^2)}$}
\addplot+[mark=square, thick, blue] table[x=h,y={xiLiL2quotlong}] {h_errors_bilaplace_long_gamma3_0.txt};
    \addlegendentry{$\|e_{\xi}\|_{L^\infty(L^2)}$}
\addplot+[mark=x, thick, green] table[x=h,y={etaLiL2quotlong}] {h_errors_bilaplace_long_gamma3_0.txt};
    \addlegendentry{$\|e_{\eta}\|_{L^\infty(L^2)}$}
\addplot+[mark=asterisk, thick, orange] table[x=h,y={gradetaLiL2quotlong}] {h_errors_bilaplace_long_gamma3_0.txt};
    \addlegendentry{$\|\nabla e_{\eta}\|_{L^\infty(L^2)}$}
\addplot+[mark=+, thick, red] table[x=h,y={LapetaLiL2quotlong}] {h_errors_bilaplace_long_gamma3_0.txt};
    \addlegendentry{$\|e_{\zeta}\|_{L^\infty(L^2)}$}
\addplot+[mark=otimes, thick, cyan] table[x=h,y={graduL2L2quotlong}] {h_errors_bilaplace_long_gamma3_0.txt};
    \addlegendentry{$\|\nabla e_\vu\|_{L^2(L^2)}$}

\Convergence{0.9e-1}{1.5e-2}{2};
\Convergence{0.9e-1}{1.5e-2}{1};
\end{loglogaxis}
\end{tikzpicture}
\caption{Mesh convergence comparison for $\|e_\vu\|_{L^\infty(L^2)}$, $\|e_{\xi}\|_{L^\infty(L^2)}$, $\|e_{\eta}\|_{L^\infty(L^2)}$, $\|\nabla e_{\eta}\|_{L^\infty(L^2)}$, $\|e_{\zeta}\|_{L^\infty(L^2)}$ and $\|\nabla e_\vu\|_{L^2(L^2)}$.  For a better comparison, the plots of the errors are shifted to start from the same point.}\label{fig:conv_h}
\end{center}
\end{figure}
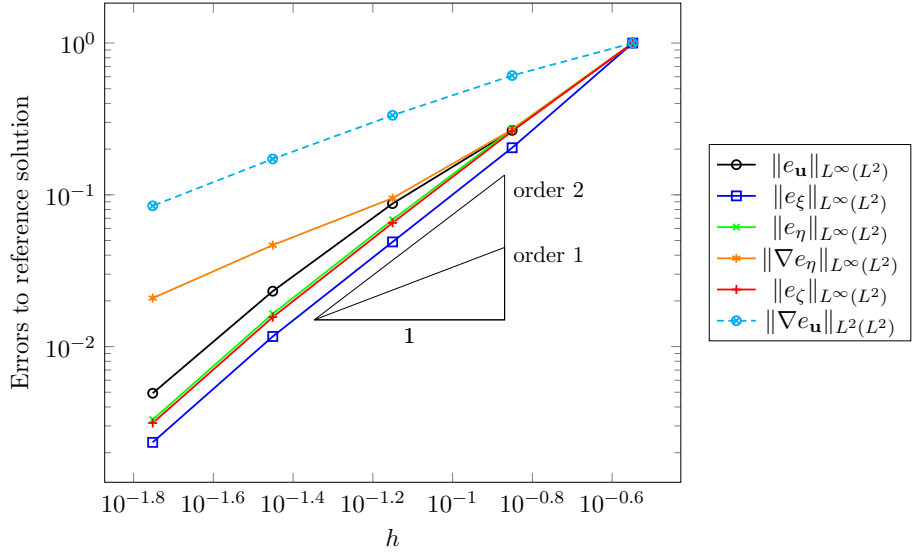

\begin{figure}[!htbp]
\begin{center}
\begin{tikzpicture}[scale=0.85]
\begin{loglogaxis}[
    width=10.5cm,
    ylabel={Errors to reference solution},
    xlabel={$\TS$},
    legend style={at={(1.4,0.5)},anchor=east},
    ]
\addplot+[mark=o, thick, black] table[x=dt,y={timeuLiL2quotlong}] {t_errors_bilaplace_long_gamma3_0.txt};
    \addlegendentry{$\|e_\vu\|_{L^\infty(L^2)}$}
\addplot+[mark=square, thick, blue] table[x=dt,y={timexiLiL2quotlong}] {t_errors_bilaplace_long_gamma3_0.txt};
    \addlegendentry{$\|e_{\xi}\|_{L^\infty(L^2)}$}
\addplot+[mark=x, thick, green] table[x=dt,y={timeetaLiL2quotlong}] {t_errors_bilaplace_long_gamma3_0.txt};
    \addlegendentry{$\|e_{\eta}\|_{L^\infty(L^2)}$}
\addplot+[mark=asterisk, thick, orange] table[x=dt,y={timegradetaLiL2quotlong}] {t_errors_bilaplace_long_gamma3_0.txt};
    \addlegendentry{$\|\nabla e_{\eta}\|_{L^\infty(L^2)}$}
\addplot+[mark=+, thick, red] table[x=dt,y={timeLapetaLiL2quotlong}] {t_errors_bilaplace_long_gamma3_0.txt};
    \addlegendentry{$\|e_{\zeta}\|_{L^\infty(L^2)}$}
\addplot+[mark=otimes, thick, cyan] table[x=dt,y={timegraduL2L2quotlong}] {t_errors_bilaplace_long_gamma3_0.txt};
    \addlegendentry{$\|\nabla e_\vu\|_{L^2(L^2)}$}
	
\Convergence{1.4e-3}{12e-2}{1};
\end{loglogaxis}
\end{tikzpicture}
\caption{Timestep convergence comparison $\|e_\vu\|_{L^\infty(L^2)}$, $\|e_{\xi}\|_{L^\infty(L^2)}$, $\|e_{\eta}\|_{L^\infty(L^2)}$, $\|\nabla e_{\eta}\|_{L^\infty(L^2)}$, $\|e_{\zeta}\|_{L^\infty(L^2)}$ and $\|\nabla e_\vu\|_{L^2(L^2)}$. For a better comparison, the errors start at the same point.}\label{fig:conv_t}
\end{center}
\end{figure}
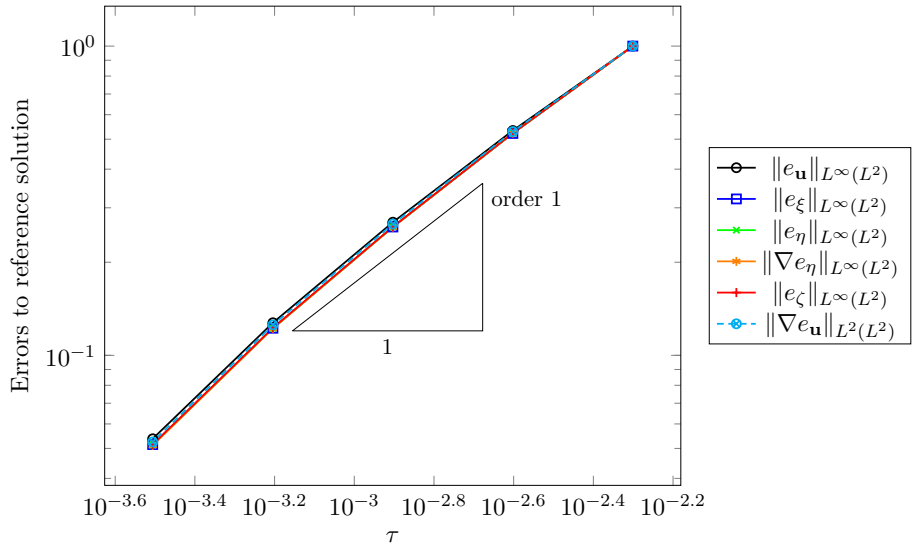

In Theorem~\ref{theorem_conv_rate} we proved that the convergence rate is linear both in $h$ (space) and $\TS$ (time) for the sum of all errors mentioned. This is justified by the experiments. In time the convergence is indeed linear for all summands (see Figure~\ref{fig:conv_t}), in space we observe a quadratic convergence for $\|e_\vu\|_{L^\infty(L^2)}$, $\|e_{\xi}\|_{L^\infty(L^2)}$, $\|e_{\eta}\|_{L^\infty(L^2)}$, $\|e_{\zeta}\|_{L^\infty(L^2)}$, but a linear convergence for $\|\nabla e_{\eta}\|_{L^\infty(L^2)}$ and 
$\|\nabla e_\vu\|_{L^2(L^2)}$ (see Figure~\ref{fig:conv_h}).

\subsection{Comparison between the semi-implicit Scheme-R and fully implicit scheme}\label{sec:comparison_schemeR-fully_implicit}
Since our proposed Scheme-R performs in accordance with the (optimal) predictions, we decided to test how well it behaves with respect to the fully implicit scheme, {as many researchers believe that a monolithic scheme should be implemented fully implicitly}. In the fully implicit scheme we solve a fully implicit nonlinear problem based on the weak form~\eqref{wf_ref}, the details of the implementation are given in Appendix~\ref{implement_fullyimplicit}. The main difference is the following. In the semi-implicit Scheme-R, as described in Appendix~\ref{implement_semiimplicit}, every time step is solved in two steps. First, we solve a linear problem for velocity $\vu$, the second order derivative of the mesh displacement $\zeta$, and the pressure $p$. This is followed by a second step in which we update the mesh displacement $\eta$. In the fully implicit scheme we solve everything at once, which, however, requires to solve a more expensive nonlinear problem. It turns out that both schemes produce the same solution.

\begin{figure}[!htbp]
\begin{center}
\begin{tikzpicture}[scale=0.85]
\begin{loglogaxis}[
    width=10.5cm,
    ylabel={Errors to reference solution},
    xlabel={$\TS$},
    legend style={at={(1.66,0.5)},anchor=east},
    ]
\addplot+[mark=o, thick, black] table[x=dt,y={explicitL2L2gradu}] {t_errors_compare_explicit_fullALE.txt};
    \addlegendentry{$\|\nabla e_\vu\|_{L^2(L^2)}$ semi-implicit}
\addplot+[mark=x, thick, blue] table[x=dt,y={fullALEL2L2gradu}] {t_errors_compare_explicit_fullALE.txt};
    \addlegendentry{$\|\nabla e_\vu\|_{L^2(L^2)}$ fully implicit}
\addplot+[mark=square, thick, green] table[x=dt,y={explicitLiL2gradeta}] {t_errors_compare_explicit_fullALE.txt};
    \addlegendentry{$\|\nabla e_{\eta}\|_{L^\infty(L^2)}$ semi-implicit}
\addplot+[mark=+, thick, orange] table[x=dt,y={fullALELiL2gradeta}] {t_errors_compare_explicit_fullALE.txt};
    \addlegendentry{$\|\nabla e_{\eta}\|_{L^\infty(L^2)}$ fully implicit}

\end{loglogaxis}
\end{tikzpicture}
\caption{Timestep convergence comparison semi-implicit vs. fully implicit for $\|\nabla e_\vu\|_{L^2(L^2)}$ and $\|\nabla e_{\eta}\|_{L^\infty(L^2)}$.}\label{comparison_fully_implicit_semiimplicit}
\end{center}
\end{figure}
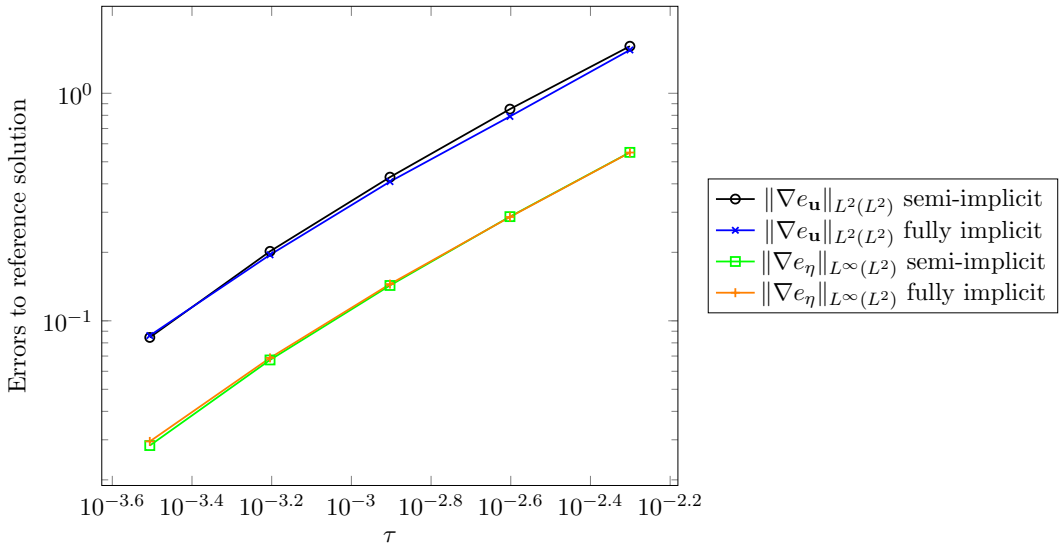

Since the main difference between the two schemes is in the time splitting, we compare the numerical errors $\|\nabla e_{\eta}\|_{L^\infty(L^2)}$ and $\|\nabla e_{\eta}\|_{L^\infty(L^2)}$ for several different time steps $\TS$. Similarly as in the previous subsection, we compute the errors 
with respect to the same reference solution obtained by the semi-implicit Scheme-R with the finest mesh $h_{\rm min}=8.84\times 10^{-3}$ and smallest time step $\TS=1\times10^{-4}$ for the numerical solutions of these two schemes. The graph of convergence in time is shown in Figure~\ref{comparison_fully_implicit_semiimplicit}. 

Note that the two discrete problems are computed on the same mesh $h=h_{\rm min}$ for both schemes. In case of the semi-implicit Scheme-R we solve a linear problem of size 410\,880 dofs in Step 1 and a linear problem of size 153\,920 dofs in Step 2 every time step. In case of the fully implicit scheme we solve a nonlinear problem of size 564\,800 dofs every time step. All problems are computed on a server equipped with Intel Xeon Gold 6240 CPU, and (although the code works in parallel) for the purpose of comparison we run them in serial. We have  recorded the CPU time of the computations for the largest and smallest time step $\TS$, see Table~\ref{tab:CPU_time}.   
For the largest time step $\TS=5\times10^{-3}$ the semi-implicit scheme is $5.53$ times faster than the fully implicit scheme that needs to solve three Newton iterations in average in every time step and solves a slightly larger problem. For the smallest time step $\TS=3.12\times10^{-4}$ the semi-implicit scheme is $3.88$ times faster because the fully implicit scheme needs in average only two Newton iterations per time step. 

\begin{table}[!htbp]
\begin{center}
\begin{tabular}{|l|ccr|}
\hline
Scheme&$\TS$&Avg Newton its&CPU time [min]\\
\hline
Fully implicit&$5.00\times10^{-3}$&3&135.5\qquad\ \\
Semi-implicit&$5.00\times10^{-3}$&--&24.5\qquad\ \\
Fully implicit&$3.12\times10^{-4}$&2&1\,310.7\qquad\ \\
Semi-implicit&$3.12\times10^{-4}$&--&338.0\qquad\ \\
\hline
\end{tabular}  
\caption{Comparison of CPU times (in minutes) for semi-implicit and fully implicit schemes.}
\label{tab:CPU_time}
\end{center}
\end{table}

\section{Conclusion}\label{sec:con}
We have introduced a novel semi-implicit and linear scheme for the approximation of the  interaction between an incompressible fluid and elastic shell allowing for {\em large deformation}. By this we mean that the domain of definition for the fluid is time changing and the changes of the domain can be arbitrarily large as long as no topological change appears.  We have proved that the scheme is energy stable and it converges to the smooth solution linearly with respect to the mesh size $h$ and time step $\TS$. We have implemented the scheme in FEniCS and observed that the convergence rates are optimal. Possibly, the rates can be improved for some terms, where we observed quadratic growth which paves the way for further research. We have compared our semi-implicit scheme with a fully implicit scheme that not only provides the same convergence rates as our scheme but does produce almost exactly the same solution. Moreover, our scheme  overperforms the fully nonlinear scheme several times in terms of consumed CPU time.

The analysis presented here, in particular the development of the interpolation operators does form the basis for new theoretical numerical investigations. It is shown that suitable interpolation operators for nonlinear equations coupled via their geometry can be constructed. Hence they motivate a respective convergence analysis for the plethora of applications involving such couplings. From their construction, it might also be possible to read where troubles of the convergence of schemes may be expected. For instance in case, when a topological change of geometry is approaching.


\section*{\centering Statements and Declarations}
{\bf Competing interests:}  On behalf of all authors, the corresponding author states that there is no conflict of interest.\\

\bibliographystyle{siamplain}

\begin{thebibliography}{10}

\bibitem{Fenics}
M. Aln{\ae}s, J. Blechta, J. Hake, A. Johansson, B. Kehlet, A. Logg, C. Richardson, J. Ring, M.E. Rognes, G.N. Wells.
\newblock The FEniCS project version 1.5.
\newblock {\em Archive of Numerical Software} {\bf 3(100)}, 2015.

\bibitem{mumps}
P.R. Amestoy, I. Duff, J-Y L'Excellent, J. Koster.
\newblock A fully asynchronous multifrontal solver using distributed dynamic scheduling.
\newblock {\em SIAM Journal on Matrix Analysis and Applications} {\bf 23(1)}: 15--41, 2001.

\bibitem{Arnold1984}
D. Arnold, N. Douglas, F. Brezzi and M. Fortin.
\newblock A stable finite element for the Stokes equations.
\newblock {\em Calcolo} {\bf 21(4)}: 337--344, 1984.


\bibitem{Basting}
S.~Basting, A.~Quaini,  S.~{\v C}ani\'c and R.~Glowinski.
\newblock Extended ALE method for fluid-structure interaction problems with large structural displacements.
\newblock {\em J. Comput. Phys.} {\bf 331}: 312--336, 2017.


\bibitem{Bazilevs}
Y.~Bazilevs, K.~Takizawa and T.~E.~Tezduyar
\newblock Computational Fluid--Structure Interaction: Methods and Applications.  Wiley series in computational mechanics. 
 \newblock{\em John Wiley \& Sons Ltd}, 2013, xvii+384pp.



\bibitem{Bodnar}
T.~Bodnar, G.~P. Galdi and {\v{S}}.~Ne{\v{c}}asov{\'a},
  editors.
\newblock {\em Fluid--Structure Interaction and Biomedical Applications}.
\newblock Birkh{\"a}user/Springer, Basel, 2014.




\bibitem{boffi}
D.~Boffi, F.~Brezzi and M.~Fortin.
\newblock {\em Mixed finite element methods and applications}.
\newblock Springer series in computational mathematics.
\newblock{Springer}, 2013

\bibitem{Boris1}
M.~Buka\v{c}, S.~\v{C}ani\'c and B.~Muha. 
\newblock  A partitioned scheme for fluid-composite structure interaction problems.
\newblock {\em J.  Comput. Phys.}, {\bf 281}:493--517, 2015.

\bibitem{Canic1} M.~Buka\v{c}, S.~\v{C}ani\'c, J.~Tamba\v{c}ac,  Y.~Wang.
\newblock Fluid--structure interaction between pulsatile blood flow and a curved stented coronary artery on a beating heart: A four stent computational study 
\newblock {\em Comput. Methods in Appl. Mech. Eng.} {\bf 350}: 679--700, 2019.

\bibitem{Boris2}
M.~Buka\v{c}, and B.~Muha.
\newblock  Stability and convergence analysis of the kinematically coupled scheme and its extensions for the fluid-structure interaction.
\newblock {\em SIAM J. Numer. Anal.} {\bf 54}(5):3032--3061, 2016.


\bibitem{Burman}
E.~Burman, R.~Durst and J.~Guzman.
\newblock  Stability and error analysis of a splitting method using Robin-Robin coupling applied to a fluid-structure interaction problem.
\newblock {\em Arxiv.} No.~1911.06760.


\bibitem{Burman2}
E.~Burman, R.~Durst and J.~Guzman.
\newblock  Fully discrete loosely coupled Robin-Robin scheme for incompressible fluid-structure interaction: stability and error analysis.
\newblock {\em Arxiv.} No.~2007.03846.


\bibitem{Ciarlet_elas}
P.~G.~Ciarlet.
\newblock Mathematical Elasticity. Volume I: Three-Dimensional Elasticity. 
\newblock{\em Academic Press, Elsevier}, 1988.

\bibitem{Ciarlet_fem}
P.~G.~Ciarlet.
\newblock The Finite Element Method for Elliptic Problems.
\newblock{\em Classics in Applied Mathematics, Society for Industrial and Applied Mathematics}, 2002.



\bibitem{Fernandez}
M.~A.~Fern\'andez and J.~Mullaert.
\newblock Convergence and error analysis for a class of splitting schemes in incompressible fluid–structure interaction
\newblock {\em IMA J. Numer. Anal.} {\bf 36}(4): 1748--1782, 2016.





\bibitem{GH2016}
C. Grandmont and M. Hillairet. 
\newblock Existence of global strong solutions to a beam--fluid interaction system. 
\newblock {\em Arch. Rational Mech. Anal.} {\bf 220}: 1283--1333, 2016. 

\bibitem{Pironneau}
F. Hecht and O. Pironneau. 
An energy stable monolithic Eulerian fluid–structure finite element method. 
{\em Int. J. Numer. Meth. Fluids.} {\bf 85}: 430--446, 2017.

\bibitem{Hundertmark}
A. Hundertmark-Zau\v{s}kov\'{a} and M. Luk\'{a}\v{c}ov\'{a}-Medvid'ov\'{a}. 
\newblock Numerical study of shear-dependent non-Newtonian fluids in compliant vessels. 
\newblock {\em Comput. Math. App.} {\bf 60}(3): 572--590, 2010. 



\bibitem{KamSchSpe22}
M.~Kampschulte, S.~Schwarzacher, and G.~Sperone.
Unrestricted deformations of thin elastic structures interacting with fluids. 
\newblock Arxiv Preprint, No. 2206.08047, 2022. 

\bibitem{Landajuela}
M.~Landajuela,  M.~Vidrascu,  D.~Chapelle, and M.A.~Fern\'andez.
\newblock Coupling schemes for the FSI forward predication challenge: comparative study and validation. 
\newblock {\em Int. J. Numer. Methods Biomed. Eng.} {\bf 33}(4), No. e2813, 2017.

\bibitem{Lozovskiy}
A.~Lozovskiy, M.~A.~Olshanskii, V. Salamatova, and Y.~V.~Vassilevski.
\newblock An unconditionally stable semi-implicit FSI finite element method. 
\newblock {\em Comput. Methods Appl. Mech. Engrg.} {\bf 297}: 437--454, 2015.

\bibitem{Lozovskiy2}
A.~Lozovskiy, M.~A.~Olshanskii, and Y.~V.~Vassilevski.
\newblock Analysis and assessment of a monolithic FSI finite element method. 
\newblock {\em Comput. \& Fluids.} {\bf 179}: 277--288, 2019. 


\bibitem{Lukacova}
M. Luk\'{a}\v{c}ov\'{a}-Medvid'ov\'{a}, G. Rusnakov\'a, and A. Hundertmark-Zau\v{s}kov\'{a}.
\newblock Kinematic splitting algorithm for fluid--structure interaction in hemodynamics. 
\newblock {\em Comput. Methods Appl. Mech. Engrg.} {\bf 265}: 83--106, 2013. 


\bibitem{Richter}
T.~Richter.
\newblock Fluid--structure interactions. Models, analysis and finite elements. Lecture Notes in Computational Science and Engineering, Volume 118. 
\newblock {\em Springer, Cham.} xviii+436 pp, 2017.

\bibitem{RinSchSul}
F. Rindler, S. Schwarzacher and E.~S\"uli.
\newblock Regularity and approximation of strong solutions to rate-independent systems. 
\newblock {\em Math. Models Methods Appl. Sci.} {\bf 27}(13): 2511--2556, 2017.


\bibitem{SS_FSI}
S.~Schwarzacher and B.~She. 
\newblock On numerical approximations to fluid--structure interactions involving compressible fluids.
\newblock {\em  Numer. Math.} {\bf 151}(1): 219--278, 2022. 


\bibitem{SchSro20}
S.~Schwarzacher and M.~Sroczinski. 
\newblock Weak--strong uniqueness for an elastic plate interacting with the Navier-Stokes equation.
\newblock {\em SIAM J. Math. Anal.}{\bf 54}(4): 4104--4138, 2022. 


\bibitem{Seboldt}
A.~Seboldt and M.~Buka\v{c}. 
\newblock A non-iterative domain decomposition method for the interaction between a fluid and a thick structure. 
\newblock {\em Arxiv.} No.~2007.00781


\bibitem{Tezduyar}
T.~E.~Tezduyar.
\newblock Frontiers in Computational Fluid--Structure Interaction and Flow Simulation. series in Modeling and Simulation in Science, Engineering and Technology. 
\newblock{\em Birkh\"auser/Springer}, xvi+480pp, 2018.




\bibitem{Pironneau2}
Y. Wang, P. K. Jimack, M. A. Walkley and O. Pironneau. 
An energy stable one-field monolithic arbitrary Lagrangian--Eulerian formulation for fluid--structure interaction. 
\newblock{\em J. Fluids and Structures} {\bf 98}: paper No. 103117, 2020.
\end{thebibliography}

\appendix


\section{Appendix: Interpolation operators}\
In this part, we present some useful estimate/equality for the interpolation operators used in our paper. 
First, we show the approximation error of the discrete Laplace $\Laph$ given by \eqref{laph} of a Riesz projection operator $\Riesz$ defined by \eqref{Riesz1}. 
\begin{Lemma}
\label{lem:w22}
Let $\eta\in W^{3,2}\cap W^{1,2}_0(\Sigma)$,  $\Sigma\subset \R^n$, $n=2,3$, and  $\Vsh\subset W^{1,2}_0(\Sigma)$ is a closed subspace. 
For any $\psi\in \Vsh$, let $\Delta_h\phi \in \Vsh$ satisfy 
\[
- \int_\Sigma \Delta_h \phi \psi dz =\int_\Sigma \nabla  \phi \cdot \nabla \psi dz
\]
and $\Riesz$ satisfy \eqref{Riesz1} in $n$ dimensions, i.e.,
\[
\int_\Sigma (\Grad \eta-\Grad \Riesz\eta) \cdot \Grad \psi\, dz=0
\]
Moreover, we assume there exists a projection $P_h: L^2(\Sigma)\to \Vsh$ satisfying  
\[
\norm{\eta-P_h\eta}_2\leq ch\norm{\nabla \eta}_2 \; \forall\;  \eta\in W^{1,2}(\Sigma).
\]
Then 
\[
\norm{\Delta\eta-\Delta_h\Riesz\eta}_2\leq ch\norm{\nabla \Delta\eta}_2. 
\]
\end{Lemma}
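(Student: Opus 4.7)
The plan is to observe that $\Delta_h \Riesz \eta$ is precisely the $L^2$-orthogonal projection of $\Delta\eta$ onto $\Vsh$. Once this identification is made, the claimed estimate reduces to the standard best-approximation property of $L^2$-projections combined with the hypothesized approximation bound for $P_h$.

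First, I would test the two discrete definitions against an arbitrary $\psi\in \Vsh$. By the definition of $\Delta_h$ applied to $\Riesz\eta$,
\[
-\int_\Sigma \Delta_h \Riesz\eta\;\psi\,dz=\int_\Sigma \nabla \Riesz\eta\cdot\nabla\psi\,dz,
\]
while the Riesz projection identity gives
\[
\int_\Sigma \nabla \Riesz\eta\cdot\nabla\psi\,dz=\int_\Sigma \nabla\eta\cdot\nabla\psi\,dz.
\]
Since $\eta\in W^{3,2}(\Sigma)\subset W^{2,2}(\Sigma)$ and $\psi\in W^{1,2}_0(\Sigma)$, integration by parts yields
\[
\int_\Sigma \nabla\eta\cdot\nabla\psi\,dz=-\int_\Sigma \Delta\eta\;\psi\,dz.
\]
Chaining the three identities produces
\[
\int_\Sigma \Delta_h\Riesz\eta\;\psi\,dz=\int_\Sigma \Delta\eta\;\psi\,dz\quad\text{for every }\psi\in \Vsh,
\]
which is exactly the characterization of the $L^2$-orthogonal projection: $\Delta_h\Riesz\eta=\Pi_{L^2}^{\Vsh}(\Delta\eta)$.

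From here, the best-approximation property of orthogonal projections gives
\[
\norm{\Delta\eta-\Delta_h\Riesz\eta}_2\leq \norm{\Delta\eta-v_h}_2\quad \text{for every }v_h\in \Vsh.
\]
Taking $v_h=P_h(\Delta\eta)$, which is admissible because $\Delta\eta\in W^{1,2}(\Sigma)$ by the assumed $W^{3,2}$-regularity of $\eta$, and invoking the hypothesized approximation estimate for $P_h$, one gets
\[
\norm{\Delta\eta-\Delta_h\Riesz\eta}_2\leq \norm{\Delta\eta-P_h\Delta\eta}_2\leq c\,h\,\norm{\nabla\Delta\eta}_2,
\]
which is the desired bound.

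There is no genuine obstacle in this plan: the single conceptual step is the identification of $\Delta_h\Riesz\eta$ with the $L^2$-projection of $\Delta\eta$, which cleanly \emph{decouples} the $H^1$-conforming Riesz projector from the discrete Laplacian and allows the gradient-preserving error of $\Riesz$ to be traded for an $L^2$-error of any convenient $L^2$-into-$\Vsh$ approximation, in this case $P_h$. In particular, no discrete inverse estimate, inf-sup argument, or Aubin--Nitsche duality is required; everything relies solely on the two defining orthogonality relations of $\Riesz$ and $\Delta_h$, integration by parts (valid because $\Vsh\subset W^{1,2}_0$), and the abstract best-approximation inequality.
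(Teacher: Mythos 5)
Your proof is correct and takes essentially the same route as the paper's: the paper's chain of identities is precisely the inline verification that $\Delta\eta-\Delta_h\Riesz\eta$ is $L^2$-orthogonal to $\Vsh$ (combining the defining relation of $\Delta_h$, the Riesz orthogonality, and integration by parts against $\psi\in W^{1,2}_0$), followed by testing with $P_h\Delta\eta$ and Cauchy--Schwarz. Your only difference is presentational — you state the identification $\Delta_h\Riesz\eta=\Pi_{L^2}^{\Vsh}(\Delta\eta)$ up front and then invoke best approximation, which is a clean repackaging of the same argument.
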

\begin{proof}
By the definition of the Riesz projection, and the discrete Laplace, we find 
\begin{align*}
& \norm{\Delta\eta-\Delta_h\Riesz\eta}_2^2 =\int_\Sigma (\Delta\eta-\Delta_h\Riesz\eta)\; (\Delta\eta-\Delta_h\Riesz\eta)\, dz
\\&=\int_\Sigma (\Delta\eta-\Delta_h\Riesz\eta)\; (P_h\Delta\eta-\Delta_h\Riesz\eta)\, dz+\int_\Sigma (\Delta\eta-\Delta_h\Riesz\eta)\; (\Delta\eta-P_h\Delta\eta)\, dz
\\
&=-\int_\Sigma (\nabla\eta-\nabla\Riesz\eta)\cdot(\nabla(P_h\Delta\eta-\Delta_h\Riesz\eta))\, dz+\int_\Sigma (\Delta\eta-P_h\Delta\eta)\; (\Delta\eta-\Delta_h\Riesz\eta)\, dz
\\
&\leq \norm{\Delta\eta-P_h\Delta\eta}_2\norm{\Delta\eta-\Delta_h\Riesz\eta}.
\end{align*}
This implies the wanted estimate by the assumed property of $P_h$:
\[
\norm{\Delta\eta-P_h\Delta\eta}_2\leq ch\norm{\nabla \Delta\eta}_2 .
\]
\end{proof}
\begin{Remark}
In our setting one possibility is to choose as $P_h$ the $L^2$-Projection into $\Vsh$ defined by  $\int (\eta-P_h\eta)\; \phi_h\, dz=0$ for all $\phi\in \Vsh$, which is known to satisfy in our setting the needed estimate $\norm{\eta-P_h\eta}_2\leq ch\norm{\nabla \eta}_2$.  Note that as $P_h\eta \in \Vsh$ by definition, it is in particular a Lipschitz function and possesses a weak gradient.
\end{Remark}
\begin{Lemma}
Let  $\hvu_h$ and $\hvu$ be respectively the solution to \eqref{SKM_ref1} and \eqref{wf1_ref} with the test function $\hq \in \hQfh$. Then it holds
\begin{equation}\label{ddiv2}
0=\intOref{\hq \Grad \delta_\vu :\Mh} .
\end{equation}
\end{Lemma}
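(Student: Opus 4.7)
The plan is to observe that \eqref{ddiv2} is essentially a direct consequence of combining the discrete incompressibility constraint built into \textbf{Scheme-R} with the special property \eqref{P3} of the Fortin--Bogovskij projector $\PiF$ constructed in Theorem~\ref{thm:projection-velocity}. Both $\hvu_h^k$ and $\PiF \hvu^k$ are, by construction, weakly divergence-free with respect to the \emph{same} matrix $\M_h^k = \M(\eta_h^k)$; this is exactly why we had to pay the price of a Bogovskij correction in Theorem~\ref{thm:projection-velocity} (cf.\ Remark after Corollary~\ref{cor:interpol1}).

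Concretely, first I would recall the definition $\delta_\vu = \hvu_h^k - \PiF \hvu^k$ from \eqref{ers}, so that for any $\hq \in \hQfh$
\[
\int_{\Oref} \hq\, \Grad \delta_\vu : \M_h \, \dvolref
= \int_{\Oref} \hq\, \Grad \hvu_h^k : \M_h^k \, \dvolref
 - \int_{\Oref} \hq\, \Grad (\PiF \hvu^k) : \M(\eta_h^k) \, \dvolref .
\]
The first term on the right vanishes by \eqref{SKM_ref1}, testing with $\hq$. The second term vanishes by property \eqref{P3} of the projector $\PiF$. This gives \eqref{ddiv2}.

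There is no real obstacle here; the work has already been done in Theorem~\ref{thm:projection-velocity}, whose whole purpose was to produce a projector whose image satisfies the \emph{discrete} (not continuous) incompressibility condition with the matrix $\M_h$ encoding the discrete geometry $\eta_h$, rather than $\M$ associated with the continuous $\eta$. Without this property, only the analogous identity with $\Grad \hvu : \M(\eta_h) - \Grad \hvu : \M(\eta)$ as extra remainder would be available, and the convergence analysis would break down (cf.\ Remark~\ref{pifmh}). Note in particular that the continuous equation \eqref{wf1_ref} for $\hvu$ is actually not invoked in the proof; only the discrete equation for $\hvu_h$ and the tailored divergence property of $\PiF$ are used.
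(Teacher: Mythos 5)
Your proof is correct and is essentially identical to the paper's: both split $\delta_\vu = \hvu_h - \PiF\hvu$ and kill the two terms via \eqref{SKM_ref1} and \eqref{P3}, respectively. Your closing observation that \eqref{wf1_ref} is never actually used is also accurate.
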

\begin{proof}
Using \eqref{SKM_ref1} and the property of the fine constructed projection \eqref{P3}, we derive
\begin{align*}
\intOref{\hq \Grad \delta_\vu :\Mh} = \intOref{\hq \Grad \hvu_h  :\Mh} -\intOref{\hq \Grad \PiF \hvu :\Mh} =0.
\end{align*}
\end{proof}
\section{Appendix: Useful equalities and estimates}
\subsection{Proof of the error equation \eqref{EEQ}}\label{app_ee1}
In this part, we show the details how to obtain the equation \eqref{EEQ} satisfied by the errors. 
First, for any $k=1,\dots,N_T$ we subtract the weak formulation \eqref{wf2_ref} from the numerical scheme \eqref{SKM_ref2} and get 
\begin{equation}\label{TIK}
  \sum_{i=1}^7 T_i^k =0,  
\end{equation} 
where $T_i^k$ reads (keeping in mind that $R_i^k$, $i=1,\dots, 7$, are given in \eqref{RS})
\begin{align*}
T_1^k  = & \vrf\intOref{ (\eta_h^{k} \PDt \hvu_h^k   -  \eta^k \pdt \hvu^k  ) \cdot \hbfphi  } 
\\ = &
\vrf \intOref{ \big( \eta_h^k \PDt (\hvu_h^k - \hvu^k) + \eta_h^k (\PDt \hvu^k -\pdt\hvu^k ) + (\eta_h^k -\eta^k) \pdt \hvu^k \big) \cdot \hbfphi } 
\\= &
 \vrf \intOref{  \eta_h^k \PDt \eu ^k  \cdot \hbfphi }  + R^k_1,
\\
T_2^k  =&  \frac12 \vrf \intOref{ ( \PDt \eta_h^k \hvu_h^{k*}   - \pdt \eta^k  \hvu^k ) \cdot \hbfphi  } 
	\\=&
\frac12\vrf \intOref{ \Big( \PDt \eta_h^k (\hvu_h^{k*} - \hvu^{k*}) + ( \PDt \eta_h^k -   \pdt \eta^k )\hvu^{k*}   + \pdt \eta^k (\hvu^{k*} -\hvu^k ) \Big) \cdot \hbfphi}
\\ =&
\frac12\vrf \intOref{ \PDt \eta_h^k \eu ^{k*}  \cdot \hbfphi } 
+R_2^k,
\\
 T_3^k = &
\frac12 \vrf \intOref{  \big( \hbfphi \cdot (\Grad\hvu_h^k)      -  \hvu_h^k \cdot (\Grad\hbfphi)   \big)  \cdot  (\Jacob_h^k)^{-1}  \cdot \hvv_h^{k-1} \eta_h^k } 
\\&  - \frac12 \vrf \intOref{  \big( \hbfphi \cdot (\Grad\hvu^k)      -  \hvu^k \cdot (\Grad\hbfphi)   \big)  \cdot  (\Jacob^k)^{-1}  \cdot \hvv^{k} \eta^k } 
\\= &
\frac12 \vrf \intOref{ \Big( \hbfphi \cdot   (\Grad \euk ) 
-\euk   \cdot   (\Grad \hbfphi) \Big)
 \cdot (\Jacob_h^k)^{-1} \hvv_h^{k-1} \eta_h^k }
\\&  
+ \frac12 \vrf \intOref{ \Big( \hbfphi \cdot (\Grad \hvu^k) - \hvu^k \cdot (\Grad\hbfphi) \Big) \cdot \left(  (\Jacob_h^k)^{-1} \hvv_h^{k-1} \eta_h^k  -  (\Jacob^k)^{-1} \hvv^{k} \eta^k  \right)  } 
 \\= & R^k_3,
\\
 T_4^k = & 
 \intOrefB{ \hp_h^k \Grad \hbfphi : \Mhk - \hp^k \Grad \hbfphi : \Mk}
\\ = &  
\intOref{  e_p^k \Grad \hbfphi : \Mhk } 
+ \intOref{ \hp^k \Grad \hbfphi : \big(\Mhk -  \Mk\big)} 
=R_4^k,
\\
T_5^k = &
2 \mu \intOrefB{  \big( \Grad \hvu_h^k (\Jacob_h^k)^{-1}\big)^\rmS  :  \big(\Grad \hbfphi  (\Jacob_h^k)^{-1}\big)\eta_h^k   
- \big( \Grad \hvu^k (\Jacob^k)^{-1}\big)^\rmS  :  \big(\Grad \hbfphi (\Jacob^k)^{-1}\big) \eta^k    }
\\ = & 2 \mu \intOref{ \big( \Grad \euk  (\Jacob_h^k)^{-1} \big)^\rmS: \big(\Grad \hbfphi (\Jacob_h^k)^{-1} \big)\eta_h^k  } 
\\ & + 2 \mu \intOrefB{  \big( \Grad \hvu^k (\Jacob_h^k)^{-1} \big)^\rmS: (\Grad \hbfphi (\Jacob_h^k)^{-1}) \eta_h^k 
-\big( \Grad \hvu^k (\Jacob^k)^{-1} \big)^\rmS: (\Grad \hbfphi (\Jacob^k)^{-1})\eta^k}
\\ = & 2 \mu \intOref{ \big( \Grad \euk  (\Jacob_h^k)^{-1} \big)^\rmS: \big(\Grad \hbfphi (\Jacob_h^k)^{-1} \big)\eta_h^k  }  + R_5^k,
\\
  T_6^k =&  \vrs  \intS{( \PDt \xi_h^k  - \pdt \xi^k  )\psi} 
 =  \vrs\intS{\PDt e_\xi^k \psi} +R_6^k,
\\
  T_7^k =& a_s(\eta_h^{k+1},  \zeta_h^{k}, \xi_h^k  ,\psi)  -
  a_s(\eta^k,  \zeta^{k} , \xi^k  ,\psi)  
 \\ = &
a_s(e_\eta^{k+1},e_\zeta^{k+1},e_\xi^k,\psi)
+ \gamma_1 \intS{\pdx (\eta^{k+1} -\eta^k) \pdx \psi}
+ \gamma_2 \intS{\pdx (\zeta^{k+1} -\zeta^k) \pdx \psi} 
 \\= &
a_s(e_\eta^{k+1},e_\zeta^{k+1},e_\xi^k,\psi)
+R_7^k.
 \end{align*}
Consequently, substituting the above expansions of the $T_i$-terms into \eqref{TIK} and shifting the $R_i$-terms to the right-hand-side, we derive \eqref{EEQ}.

\subsection{Preliminary estimates}
In this part we show some preliminary estimates and equalities. 
First, we show the estimates related to the time discretization operator $\PDt$ given by \eqref{time_D}. 
\begin{Lemma}
\label{lm_edt}
Let $\phi \in L^2((0,T)\times D)$ for $D \in \{ \Sigma, \Oref\}$. Then we have
\begin{subequations}
\begin{equation}\label{edts0}
 \TS \sumN \norm{ \PDt \phi^k }_{L^2(D)}^2 \aleq   \norm{\pdt \phi}_{L^2((0,T)\times D)}^2,
\end{equation}
\begin{equation}\label{edts1}
 \TS \sumN \norm{ \PDt \phi ^{k} -  \pdt \phi^k}_{L^2(D)}^2 \aleq \TS^2  \norm{\pdtt \phi}_{L^2((0,T)\times D)}^2,
\end{equation}
\begin{equation}\label{edts2}
 \TS \sumN \norm{ \PDt \phi ^{k+1} -  \pdt \phi^k}_{L^2(D)}^2 \aleq \TS^2  \norm{\pdtt \phi}_{L^2((0,T)\times D)}^2.
\end{equation}
\end{subequations}
\end{Lemma}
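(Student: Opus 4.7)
All three estimates follow from the standard observation that $\PDt$ is nothing but a time-average of $\pdt$: by the fundamental theorem of calculus,
\[
\PDt \phi^k = \frac{\phi^k - \phi^{k-1}}{\TS} = \frac{1}{\TS}\int_{t_{k-1}}^{t_k} \pdt \phi(t)\,\dt,
\]
where $[t_{k-1},t_k]$ denotes the interval associated with the index $k$ (of length comparable to $\TS$, even under the non-uniform selection \eqref{eq:tk}, since $t_k^{\TS}\in[k\TS,(k+1)\TS)$). The plan is to derive \eqref{edts0}, \eqref{edts1}, \eqref{edts2} one by one from this representation via Jensen/Cauchy--Schwarz in the time variable and Fubini in the spatial variable.

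For \eqref{edts0}, I apply Jensen's inequality pointwise in $x\in D$ to the identity above, getting
\[
|\PDt \phi^k(x)|^2 \leq \frac{1}{\TS}\int_{t_{k-1}}^{t_k}|\pdt \phi(t,x)|^2\,\dt.
\]
Integrating over $D$, multiplying by $\TS$, and summing over $k$ telescopes the time intervals (with at most a bounded overlap constant coming from the non-uniform grid) into $(0,T)$, yielding the bound by $\|\pdt\phi\|_{L^2((0,T)\times D)}^2$.

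For \eqref{edts1}, I write
\[
\PDt \phi^k - \pdt \phi^k = \frac{1}{\TS}\int_{t_{k-1}}^{t_k}\bigl(\pdt\phi(t) - \pdt\phi(t_k)\bigr)\dt = -\frac{1}{\TS}\int_{t_{k-1}}^{t_k}\!\int_{t}^{t_k}\pdtt\phi(s)\,\mathrm{d}s\,\dt,
\]
and apply Cauchy--Schwarz twice (first in $s$, then in $t$) to obtain, pointwise in $x$,
\[
|\PDt \phi^k(x) - \pdt \phi^k(x)|^2 \leq \TS \int_{t_{k-1}}^{t_k}|\pdtt \phi(s,x)|^2\,\mathrm{d}s.
\]
Integrating in $x$, multiplying by $\TS$, and summing gives \eqref{edts1}. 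Estimate \eqref{edts2} is proved by the same argument, now using the interval $[t_k,t_{k+1}]$ in the representation of $\PDt\phi^{k+1}-\pdt\phi^k$.

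The only mild subtlety is that the points $t_k^{\TS}$ from \eqref{eq:tk} need not be equi-spaced, so the intervals $[t_{k-1}^{\TS},t_k^{\TS}]$ may not perfectly tile $[0,T]$. However, because $t_k^{\TS}\in[k\TS,(k+1)\TS)$ their lengths are uniformly comparable to $\TS$ and they cover $(0,T)$ with uniformly bounded overlap, so the final summation absorbs this into the hidden constant in $\aleq$. No genuine obstacle is expected; the proof is purely a routine Jensen/Cauchy--Schwarz calculation.
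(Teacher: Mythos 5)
Your proof is correct and follows essentially the same route as the paper: both reduce the difference-quotient error to a weighted integral of $\pdtt\phi$ over one time step (your iterated integral $\frac{1}{\TS}\int\!\int_t^{t_k}\pdtt\phi\,\mathrm{d}s\,\dt$ is, by Fubini, exactly the paper's integration-by-parts representation $\int \frac{t^{k+1}-t}{\TS}\pdtt\phi\,\dt$) and then apply Cauchy--Schwarz and sum. The remark about non-equispaced points is a harmless aside; the lemma is used with the uniform grid $t^k=k\TS$, where your Jensen constant is exact.
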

\begin{proof}
We only need to prove \eqref{edts2}, the others follow analogously.\footnote{For \eqref{edts0} and \eqref{edts1} and $p=2=q$ a proof can be found in Lemma 5.4 and  Lemma 5.3 of \cite{Boris2}, respectively.} To begin, we recall the product rule $(uv)' = u' v + u v'$ and apply it for $u = (t^{k+1} -t )/\TS$ and $v =\pdt \phi(t)$, which yields  
\[
\int_{t^k} ^{t^{k+1}}  \frac{t^{k+1} -t }{\TS} \pdtt \phi(t) \dt 
= \left[ \frac{t^{k+1} -t }{\TS} \pdt \phi(t)\right]_{t^k}^{t^{k+1}}  + \frac{1}{\TS}\int_{t^k} ^{t^{k+1}}   \pdt \phi(t) \dt 
=  \PDt \phi^{k+1} - \pdt \phi(t^k). 
\]
Thanks to the above equality and H\"older's inequality, we obtain
\begin{equation*}
\begin{aligned}
& \TS \sumN \norm{ \PDt \phi ^{k+1} -  \pdt \phi^k}_{L^2(D)}^2 
=\TS \sumN  \intD{ \Abs{\int_{t^k} ^{t^{k+1}}  \frac{t^{k+1} -t }{\TS} \pdtt \phi(t) \dt }^2 }
\\& \leq  
\TS \sumN  \intD{
\left( \int_{t^k} ^{t^{k+1}}  \Abs{ \frac{t^{k+1} -t }{\TS} }^2 \dt  \right)
\left( \int_{t^k} ^{t^{k+1}} \Abs{  \pdtt \phi(t)  }^2 \dt \right)} 
\\& \leq  
\frac13 \TS^2   \intD{\sumN 
\int_{t^k} ^{t^{k+1}} \Abs{  \pdtt \phi(t)  }^2 \dt }  
=\frac13 \TS^2  \norm{\pdtt \phi(t)}_{L^2((0,T)\times D)}^2, 
\end{aligned}
\end{equation*}
which proves \eqref{edts2}.  
\end{proof}

\begin{Lemma}
Let $\eta \in W^{2,2}(\Sigma)$, $\xi=\pdt \eta$, $\zeta = - \Lapx \eta$, $\eta_h \in \Vsh$, $\xi_h^k=\PDt \eta_h^{k+1}$, $k=1,\ldots,N_T$,  $\zeta_h = -\Laph \eta_h$, and $\psi\in \Vsh$. Let $a_s$ be given by \eqref{abbs} and 
the notation of the errors be given by \eqref{ers}. Then
\begin{equation}\label{IM1}
 \delta_\xi^k = \PDt \delta_\eta^{k+1} + \Riesz (\PDt \eta^{k+1} -\pdt \eta^k),
\end{equation}
\begin{equation}\label{IM12}
     \delta_\zeta 
     = - \Laph \delta_\eta ,
\quad
 \intS{  \psi  \delta_\zeta  } 
= - \intS{ \Laph \psi  \;  \delta_\eta   } .
\end{equation}

\begin{equation}\label{IM2}
 \intS{\pdx I_\eta \pdx \psi}=0,\quad   \intS{\pdx I_\xi \pdx \psi}=0, 
\end{equation}
\begin{equation}\label{IM23}
\intS{I_\zeta \psi} =0, \quad 
\intS{\pdx I_\zeta \pdx\psi}=0,
\end{equation}
\begin{equation}\label{IM3}
   \intS{\pdx \delta_\zeta \pdx \psi}
  = \intS{\Laph \delta_\eta \Laph \psi},
\end{equation}
\begin{equation}\label{IM4}
\begin{aligned}
	\intS{\pdx \delta_\eta^{k+1} \pdx \delta_\xi^k}
	=& \intSB{\PDt \frac{|\pdx \delta_\eta^{k+1}|^2}{2}  + \frac{\TS}{2}|\PDt \pdx  	\delta_\eta^{k+1}|^2}
\\&+\intS{\pdx \delta_\eta^{k+1} \pdx (\PDt \eta^{k+1} - \pdt\eta^k )},
\end{aligned}
\end{equation}
\begin{equation}\label{IM5}
\begin{aligned}
	\intS{\pdx \delta_\zeta^{k+1} \pdx \delta_\xi^k}
	=& \intSB{\PDt \frac{| \delta_\zeta^{k+1}|^2}{2}  + \frac{\TS}{2}|\PDt   	\delta_\zeta^{k+1}|^2}
\\&- \intS{ \delta_\zeta^{k+1} \Lapx (\PDt \eta^{k+1} - \pdt\eta^k )},
\end{aligned}
\end{equation}
\begin{equation}\label{IM6}
\begin{aligned}
&a_s(e_\eta^{k+1},e_\zeta^{k+1},e_\xi^k,\delta_\xi^k )
=
 \PDt \intSB{ \frac{\gamma_1}{2} \abs{\pdx \delta_\eta^{k+1}}^2 + \frac{\gamma_2}{2} \abs{\delta_\zeta^{k+1}}^2 } + \gamma_3 \intS{\abs{\delta_\xi^k}^2}
\\&\quad + \frac{\TS}2 \intSB{\gamma_1 |\PDt \pdx \delta_\eta^{k+1}|^2 +\gamma_2 |\PDt \delta_\zeta^{k+1}|^2  }
\\& \quad 
+\intSB{\gamma_1 \pdx \delta_\eta^{k+1} \pdx (\PDt \eta^{k+1} - \pdt\eta^k )
-\gamma_2 \delta_\zeta^{k+1} \Lapx (\PDt \eta^{k+1} - \pdt\eta^k )}.
\end{aligned}
\end{equation}
\end{Lemma}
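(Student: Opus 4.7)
The lemma collects seven identities that are algebraic consequences of four ingredients: the scheme relation $\xi_h^k=\PDt\eta_h^{k+1}$, the definition \eqref{Riesz1}--\eqref{Riesz2} of the Riesz projection $\Riesz$, the definition \eqref{laph} of the discrete Laplace $\Laph$, and the two-term algebraic identity \eqref{algab}. I would treat them in roughly this order. For \eqref{IM1}, write $\delta_\xi^k=\xi_h^k-\Riesz\xi^k=\PDt\eta_h^{k+1}-\Riesz\pdt\eta^k$, add and subtract $\PDt\Riesz\eta^{k+1}$, and use linearity of $\Riesz$ (which commutes with the difference quotient $\PDt$). For \eqref{IM12}, the first equality $\delta_\zeta=-\Laph\delta_\eta$ follows by linearity of $\Laph$ applied to $\delta_\eta=\eta_h-\Riesz\eta$; the second is the self-adjointness of $\Laph$ on $\Vsh$, an immediate two-fold application of \eqref{laph}. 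Identity \eqref{IM2} is a one-line restatement of \eqref{Riesz1} applied to the continuous functions $\eta$ and $\xi$ respectively.

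For \eqref{IM3}, substitute $\delta_\zeta=-\Laph\delta_\eta$ into the left-hand side and use \eqref{laph} once with the function $\delta_\zeta\in\Vsh$ and test $\psi$, then recognize the resulting $-\intS{\Laph\delta_\zeta\,\psi}=\intS{\Laph^2\delta_\eta\,\psi}=\intS{\Laph\delta_\eta\,\Laph\psi}$ by self-adjointness. Identities \eqref{IM4} and \eqref{IM5} are parallel and are the heart of the energy bookkeeping: substitute \eqref{IM1} to split $\delta_\xi^k=\PDt\delta_\eta^{k+1}+\Riesz(\PDt\eta^{k+1}-\pdt\eta^k)$. The $\PDt\delta_\eta^{k+1}$ piece yields the telescoping-plus-dissipation structure via \eqref{algab} directly for \eqref{IM4}; for \eqref{IM5} one first rewrites
\begin{equation*}
\intS{\pdx\delta_\zeta^{k+1}\,\pdx\PDt\delta_\eta^{k+1}}=-\intS{\delta_\zeta^{k+1}\,\Laph\PDt\delta_\eta^{k+1}}=\intS{\delta_\zeta^{k+1}\,\PDt\delta_\zeta^{k+1}}
\end{equation*}
using \eqref{laph}, the commutativity $[\Laph,\PDt]=0$, and \eqref{IM12}, and only then invokes \eqref{algab}. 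The Riesz-correction piece is killed in \eqref{IM4} by \eqref{Riesz1} (since the test $\pdx\delta_\eta^{k+1}\in\pdx\Vsh$) and in \eqref{IM5} by the $L^2$ identity \eqref{Riesz2}, which converts $\intS{\pdx\Riesz(\cdot)\pdx\delta_\zeta^{k+1}}=-\intS{\Laph\Riesz(\cdot)\,\delta_\zeta^{k+1}}$ into $-\intS{\Lapx(\cdot)\,\delta_\zeta^{k+1}}$.

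The two interpolation-error identities in \eqref{IM23} come next. The first, $\intS{I_\zeta\,\psi}=0$, is a direct rewriting of \eqref{Riesz2} since $I_\zeta=-\Laph\Riesz\eta+\Lapx\eta$. The second, $\intS{\pdx I_\zeta\,\pdx\psi}=0$, is the subtle one: here I would integrate by parts the smooth piece $\intS{\pdx\Lapx\eta\,\pdx\psi}=-\intS{\Lapx^2\eta\,\psi}$ using the structural boundary condition $\Lapx\eta|_{\pd\Sigma}=0$ from \eqref{pde_S}, rewrite the discrete piece as $\intS{\pdx(-\Laph\Riesz\eta)\,\pdx\psi}=\intS{\Laph^2\Riesz\eta\,\psi}$ via \eqref{laph} applied to $-\Laph\Riesz\eta\in\Vshz$, and then identify $\Laph^2\Riesz\eta$ with the $L^2$-projection of $\Lapx^2\eta$ onto $\Vsh$ by iterating \eqref{Riesz2} (i.e.\ applying the $L^2$ orthogonality of $\Laph\Riesz$ both on $\eta$ and on its image). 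Finally for \eqref{IM6}, expand $a_s(e_\eta^{k+1},e_\zeta^{k+1},e_\xi^k,\delta_\xi^k)$ according to the definition \eqref{abbs}, split each $e_\bullet=\delta_\bullet+I_\bullet$, eliminate the $I_\eta,I_\xi,I_\zeta$ contributions by \eqref{IM2} and \eqref{IM23}, and apply \eqref{IM4}, \eqref{IM5} together with the analog of \eqref{IM4} for the $\gamma_3$-term (which is a direct \eqref{algab}) to assemble the stated right-hand side.

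The main obstacle I expect is the second identity in \eqref{IM23}: it demands simultaneous $L^2$- and $H^1$-orthogonality of the projection residual $\Lapx\eta-\Laph\Riesz\eta$ against $\Vsh$, and a generic $L^2$-projection error does not have the latter. The boundary condition $\Lapx\eta|_{\pd\Sigma}=0$ from \eqref{pde_S} and the zero-trace property of $\Vsh$ must be used in a coordinated way so that the boundary terms from the two integration-by-parts steps vanish and the discrete-continuous mismatch telescopes; the remaining identities are comparatively mechanical once \eqref{IM1} and \eqref{IM12} are in hand.
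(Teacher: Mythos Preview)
Your overall strategy and the order of derivation match the paper's proof essentially identity by identity: \eqref{IM1}--\eqref{IM3} via the definitions of $\Riesz$ and $\Laph$, then \eqref{IM4}--\eqref{IM5} via \eqref{IM1} and \eqref{algab}, and finally \eqref{IM6} by splitting $e_\bullet=\delta_\bullet+I_\bullet$ and killing the $I$-contributions.

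The one place you diverge is the second identity in \eqref{IM23}. The paper's argument is a single line: having shown $\intS{I_\zeta\,\psi}=0$ for all $\psi\in\Vsh$, it replaces $\psi$ by $\Laph\psi\in\Vsh$ and writes
\[
\intS{\pdx I_\zeta\,\pdx\psi}=-\intS{I_\zeta\,\Laph\psi}=0.
\]
Your route through the boundary condition $\Lapx\eta|_{\partial\Sigma}=0$ and an ``iteration of \eqref{Riesz2}'' is considerably more involved and does not actually close the step: to conclude you would need $\intS{(\Laph^2\Riesz\eta-\Lapx^2\eta)\psi}=0$ for all $\psi\in\Vsh$, and this is \emph{not} a formal consequence of \eqref{Riesz2} (which only gives $L^2$-orthogonality of $\Laph\Riesz\eta-\Lapx\eta$, not of its discrete/continuous Laplacian). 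So your proposed argument for this identity has a genuine gap.

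That said, your instinct that this is the delicate point is correct. The paper's first equality above uses $\intS{\pdx\phi\,\pdx\psi}=-\intS{\phi\,\Laph\psi}$ with $\phi=I_\zeta$, but the defining relation \eqref{laph} for $\Laph\psi$ is only tested against discrete functions $\chi\in\Vsh$, while $I_\zeta=\Lapx\eta-\Laph\Riesz\eta\notin\Vsh$. So the paper's one-liner is terse at exactly the place you flagged; your more elaborate argument does not repair it, but your diagnosis that simultaneous $L^2$- and $H^1$-orthogonality is being asked for is accurate.
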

\begin{proof}
Recalling equalities 
\[\xi^k = \pdt\eta^k, \xi_h^k = \PDt\eta_h^{k+1}
\]
and the errors defined in \eqref{ers} we get \eqref{IM1}
\begin{align*}
& \delta_\xi^k - \PDt \delta_\eta^{k+1}  = (\xi_h^k - \Riesz \xi^k ) -  (\PDt\eta_h^{k+1} - \PDt \Riesz \eta^{k+1} )
\\&= \PDt \Riesz \eta^{k+1} - \Riesz \xi^k
=\Riesz(\PDt \eta^{k+1} - \pdt\eta^k ).
\end{align*}
Moreover, it is easy to check 
\begin{align*}
& \delta_\zeta = \zeta_h + \Laph \Riesz \eta =  \Laph \Riesz \eta  - \Laph \eta_h   = - \Laph \delta_\eta .
\end{align*}
Further, recalling the discrete Laplace operator, we complete the proof of \eqref{IM12}, i.e.,
\begin{align*}
& \delta_\zeta = \zeta_h + \Laph \Riesz \eta =  \Laph \Riesz \eta  - \Laph \eta_h   = - \Laph \delta_\eta ,
\\&
 \intS{  \psi  \delta_\zeta  } =   - \intS{  \psi    \Laph  \delta_\eta  }
=   \intS{  \pdx \psi    \pdx  \delta_\eta  }
= - \intS{ \Laph \psi  \;  \delta_\eta   } .
\end{align*}
Next, recalling the Riesz projection \eqref{Riesz1} we immediate get \eqref{IM2}
\begin{align*}
&  \intS{\pdx I_\eta \pdx \psi}=   \intS{\pdx (\Riesz \eta - \eta) \pdx \psi} =0, 
 \\&
    \intS{\pdx I_\xi \pdx \psi}=   \intS{\pdx (\Riesz \xi - \xi) \pdx \psi} =0.
\end{align*}
Analogously, we find 
\begin{align*}
\intS{I_\zeta \psi}= \intS{(\Lapx \eta - \Laph \Riesz \eta)\psi}
= \intS{(\pdx \Riesz \eta - \pdx \eta ) \pdx\psi}=0.
\end{align*}
Then, setting $\Laph \psi$ in the above equality, we get \eqref{IM23}
\begin{align*}
\intS{\pdx I_\zeta \pdx\psi}
=-\intS{ I_\zeta \Laph \psi}
=0.
\end{align*}
Further, recalling \eqref{laph} and \eqref{Riesz2} we get \eqref{IM3}
\begin{align*}
&  \intS{\pdx \delta_\zeta \pdx \psi}
=-\intS{\delta_\zeta \Laph \psi}
=-\intS{(\zeta_h + \Laph \Riesz \eta) \Laph \psi}
\\& = \intS{(\Laph\eta_h - \Laph \Riesz \eta) \Laph \psi}
= \intS{\Laph \delta_\eta \Laph \psi}
\end{align*}
Using the above equalities \eqref{IM1} and \eqref{IM3}, the algebraic inequality \eqref{algab}, and the Riesz projection \eqref{Riesz1} we get \eqref{IM4}
\begin{align*}
&  \intS{\pdx \delta_\eta^{k+1} \pdx \delta_\xi^k}
= \intS{ \pdx\delta_\eta^{k+1} \Big( \pdx \PDt \delta_\eta^{k+1}
+ \pdx \Riesz(\PDt \eta^{k+1} - \pdt\eta^k ) \Big)}
\\&= \intSB{\PDt \frac{|\pdx \delta_\eta^{k+1}|^2}{2}  + \frac{\TS}{2}|\PDt  \pdx \delta_\eta^{k+1}|^2}
+\intS{\pdx \delta_\eta^{k+1} \pdx (\PDt \eta^{k+1} - \pdt\eta^k )}
\end{align*}
Analogously, using the above equalities \eqref{IM1}, \eqref{IM12}, and \eqref{IM3}, the algebraic inequality \eqref{algab}, and the Riesz projection \eqref{Riesz2}, we get \eqref{IM5}
\begin{align*}
&  \intS{\pdx \delta_\zeta^{k+1} \pdx \delta_\xi^k}
= - \intS{ \delta_\zeta^{k+1} \Laph \delta_\xi^k}
= \intS{\Laph \delta_\eta^{k+1} \Laph \delta_\xi^k}
\\& 
= \intS{\Laph \delta_\eta^{k+1} \Laph \PDt \delta_\eta^{k+1}}
+\intS{\Laph \delta_\eta^{k+1} \Laph \Riesz(\PDt \eta^{k+1} - \pdt\eta^k )}
\\&= \intSB{\PDt \frac{|\Laph \delta_\eta^{k+1}|^2}{2}  + \frac{\TS}{2}|\PDt \Laph  \delta_\eta^{k+1}|^2}
+\intS{\Laph \delta_\eta^{k+1} \Lapx (\PDt \eta^{k+1} - \pdt\eta^k )}
\\& = \intSB{\PDt \frac{|\delta_\zeta^{k+1}|^2}{2}  + \frac{\TS}{2}|\PDt  \delta_\zeta^{k+1}|^2}
- \intS{\delta_\zeta^{k+1} \Lapx (\PDt \eta^{k+1} - \pdt\eta^k )}.
\end{align*}
Using the equalities \eqref{IM2} and \eqref{IM23} we know that 
\begin{align*}
a_s(I_\eta^{k+1},I_\zeta^{k+1},I_\xi^k,\psi)=0.
\end{align*}
Consequently, collecting the above equality together with \eqref{IM4} and \eqref{IM5}, we get
\begin{align*}
&a_s(e_\eta^{k+1},e_\zeta^{k+1},e_\xi^k,\delta_\xi^k )
=a_s(\delta_\eta^{k+1},\delta_\zeta^{k+1},\delta_\xi^k,\delta_\xi^k )
\\&=
	\gamma_1 \intS{ \pdx \delta_\eta^{k+1} \pdx \delta_\xi^k}
	+\gamma_2 \intS{\pdx \delta_\zeta^{k+1} \pdx \delta_\xi^k}
	+\gamma_3 \intS{\pdx \delta_\xi^k \pdx \delta_\xi^k}
\\&=
\gamma_1\intSB{\PDt \frac{|\pdx \delta_\eta^{k+1}|^2}{2}  + \frac{\TS}{2}|\PDt  \pdx \delta_\eta^{k+1}|^2}
+\gamma_1\intS{\pdx \delta_\eta^{k+1} \pdx (\PDt \eta^{k+1} - \pdt\eta^k )}
\\& \quad +
\gamma_2\intSB{\PDt \frac{|\delta_\zeta^{k+1}|^2}{2}  + \frac{\TS}{2}|\PDt  \delta_\zeta^{k+1}|^2}
-\gamma_2 \intS{\delta_\zeta^{k+1} \Lapx (\PDt \eta^{k+1} - \pdt\eta^k )}
+ \gamma_3 \intS{\abs{\delta_\xi^k}^2}
\\& =
 \PDt \intSB{ \frac{\gamma_1}{2} \abs{\pdx \delta_\eta^{k+1}}^2 + \frac{\gamma_2}{2} \abs{\delta_\zeta^{k+1}}^2 } + \gamma_3 \intS{\abs{\delta_\xi^k}^2}
\\&\quad + \frac{\TS}2 \intSB{\gamma_1 |\PDt \pdx \delta_\eta^{k+1}|^2 +\gamma_2 |\PDt \delta_\zeta^{k+1}|^2  }
\\& \quad 
+\intSB{\gamma_1 \pdx \delta_\eta^{k+1} \pdx (\PDt \eta^{k+1} - \pdt\eta^k )
- \gamma_2 \delta_\zeta^{k+1} \Lapx (\PDt \eta^{k+1} - \pdt\eta^k )},
\end{align*}
which proves \eqref{IM6} and completes the proof. 
\end{proof}

\subsection{Secondary estimates}
\begin{Lemma}Let $(p,\vu,\xi,\eta)$ be a target smooth solution of the FSI problem \eqref{pde_f}--\eqref{pde_bdc} belonging to the class \eqref{STClass}. 
Let $(p_h, \hvu_h, \xi_h,\eta_h)$ be a solution to the numerical scheme \eqref{SKM}. Then, 
\begin{equation}\label{est1}
\norm{\Jacob}_{L^\infty L^\infty} 
+ \norm{\Jacob^{-1}}_{L^\infty L^\infty} 
+\norm{\M}_{L^\infty L^\infty} 
+ \norm{\M^{-1}}_{L^\infty L^\infty} 
\aleq 1 ,
\end{equation}
\begin{equation}\label{est2}
\norm{e_\eta}_{L^\gamma L^\infty } 
\leq  \norm{\pdx\delta_\eta}_{L^\gamma L^2 } +h\norm{\pdx^2 \eta}_{L^\gamma L^2 } , 1 \leq \gamma \leq \infty,
\end{equation}
\begin{equation}\label{est3}
\norm{\pdx e_\eta}_{L^2 L^\infty } 
\aleq \norm{ \delta_\zeta}_{L^2 L^2 } 
+ h \norm{\pdx^3 \eta}_{L^2 L^2 } 
\end{equation}
and
\begin{equation}\label{est41}
\norm{\Mh - \M}_{L^2 L^\infty } 
 \aleq \norm{\pdx \delta_\eta}_{L^2 L^2} +  \norm{ \delta_\zeta}_{L^2 L^2} + h\left(\norm{\pdx^2 \eta}_{L^2 L^2} + \norm{\pdx^3 \eta}_{L^2 L^2} \right) 
 .
\end{equation}
Further
\begin{equation}\label{est42}
\norm{\Jacob_h - \Jacob}_{L^2L^\infty}   
 \aleq \norm{\pdx \delta_\eta}_{L^2 L^2} +  \norm{ \delta_\zeta}_{L^2 L^2} + h\left(\norm{\pdx^2 \eta}_{L^2 L^2} + \norm{\pdx^3 \eta}_{L^2 L^2} \right) 
\end{equation}
and
\begin{equation}\label{est43}
\begin{aligned}
 \norm{\Jacob_h^{-1} - \Jacob^{-1}}_{L^2L^\infty}   
 \aleq &
  \frac{1 + \norm{\pdx \eta}_{L^\infty L^\infty}  }{\underline{\eta}^2}  \norm{\pdx \delta_\eta}_{L^2 L^2 } 
 + \frac{1 }{\underline{\eta}} \norm{\delta_\zeta}_{L^2 L^2 }  
\\&  + h \left( \frac{1 + \norm{\pdx \eta}_{L^\infty L^\infty}  }{\underline{\eta}^2}   \norm{\pdx^2 \eta}_{L^2 L^2 } 
+ \frac{1 }{\underline{\eta}}  \norm{\pdx^3 \eta}_{L^2 L^2 }  \right).
\end{aligned}
\end{equation}
For $p\in [1,\infty)$ we find
\begin{align}
\label{eq:trace}
\norm{{\xi}^k_h}_{L^p}\leq \norm{\nabla\vu_h^k}_{L^2},
\end{align}
and
\begin{align}
\label{eq:new}
 \norm{D_t \delta_\eta}_{L^2 L^p} \aleq \norm{\delta_\xi}_{L^2 L^p} +  \TS \norm{\partial_t^2\partial_x\eta}_{L^2L^2} .
\end{align}
For $p\in [1,\infty)$ we find
\begin{equation}\label{est5}
\norm{\hvu_h -\hvu}_{L^2L^p}
\aleq \norm{\delta_\vu }_{L^2L^p} + h \norm{ \hvu }_{L^2W^{2,2}},
\end{equation}
\begin{equation}\label{est6}
\norm{\hvw_h -\hvw}_{L^2L^p}
 \aleq \norm{\delta_\xi }_{L^2L^p} 
+\TS \norm{\pdt^2\partial_x \eta }_{L^2L^2}  + h \norm{\eta }_{L^2W^{2,2}} ,
\end{equation}
and finally
\begin{equation}\label{est7}
\begin{aligned}
& \vrf \TS \summ \bigg(\intOref{\abs{\hvv_h^{k-1} \eta_h^k  -  \Jacob_h^k  (\Jacob^k) ^{-1} \hvv^k  \eta^k}^2 }\bigg)^\frac{2}{p}
 \aleq 
 \TS \summ \alpha \intOref{ \abs{\nabla \delta_\vu^k}^2 \eta_h^k } + \frac{1}{\alpha}\intOref{ \abs{ \delta_\vu^k}^2 \eta_h^k }
\\& +  \norm{\delta_\xi}_{L^2L^p}^2
+   c_1 \norm{\delta_\eta}_{L^2 L^p }^2 
+ c_2 \norm{\pdx \delta_\eta}_{L^2L^p}^2  
+ c_3 \TS^2   +c_4 h^2 ,
\end{aligned}
\end{equation}
 where 
\begin{equation}\label{cis}
\begin{aligned}
& c_1= \vrf / \underline{\eta}   \norm{\hvv}_{L^\infty L^\infty}^2 \left(1+ \norm{\M}_{L^\infty L^\infty}^2  \right), 
	\quad 
c_2 = \vrf / \underline{\eta}  \norm{\M}_{L^\infty L^\infty}^2 \norm{\hvv  }_{L^\infty L^\infty}^2 , 
	\\& 
c_3=\vrf \Ov{\eta} \left( \norm{\pdt \hvu}_{L^2L^p}^2 +  \norm{\pdt^2 \partial_x\eta}_{L^2L^2}^2\right),
	\\&
c_4=  c_1  \norm{\pdx \eta}_{L^2 L^p }   + c_2 \norm{\Lapx \eta}_{L^2L^2} + \vrf \Ov{\eta}  \norm{\Grad \hvu}_{L^2L^p}^2  .
\end{aligned}
\end{equation}
\end{Lemma}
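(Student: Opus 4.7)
The plan is to exploit the fact that almost every bound listed in this lemma splits via the triangle inequality into an interpolation error piece (of order $h$) and a discrete-error piece $\delta$, so that the main work is to track which norms of $\delta$ appear and to use the Sobolev and trace estimates available in one and two space dimensions. The uniform bounds from Corollary~\ref{co_est} together with the smoothness class \eqref{STClass} will be used implicitly throughout.

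First I would dispose of the geometric estimates. For \eqref{est1}, the explicit formulas \eqref{Jacob} and \eqref{tauref} combined with $\underline{\eta}\leq\eta\leq\overline{\eta}$ and $\|\pd_x\eta\|_{L^\infty}\aleq 1$ give the bounds on $\Jacob,\M$ directly; for $\Jacob^{-1},\M^{-1}$ use the cofactor formula, whose denominator is $\det\Jacob=\eta\geq\underline{\eta}>0$. For \eqref{est2} write $e_\eta=\delta_\eta+I_\eta$, bound $\|I_\eta\|_{L^\infty}\aleq h\|\pd_x^2\eta\|_{L^2}$ from the Riesz projection error (this is the one-dimensional analogue of \eqref{pro_eR}), and bound $\|\delta_\eta\|_{L^\infty}\aleq\|\pd_x\delta_\eta\|_{L^2}$ by the one-dimensional embedding $W^{1,2}_0(\Sigma)\hookrightarrow L^\infty(\Sigma)$. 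For \eqref{est3}, differentiate once and use the identity $\pd_x\delta_\eta=\pd_x(-\Lap_h^{-1}\delta_\zeta)$ coming from \eqref{IM12}, combined with the inverse of the 1D discrete Laplacian and again the embedding $W^{1,2}\hookrightarrow L^\infty$ in one dimension. The matrix estimates \eqref{est41}--\eqref{est43} then follow by plugging these scalar estimates into the entry-wise formulas for $\M_h-\M$ and $\Jacob_h-\Jacob$; for \eqref{est43} I would use
\[
\Jacob_h^{-1}-\Jacob^{-1}=\Jacob_h^{-1}(\Jacob-\Jacob_h)\Jacob^{-1},
\]
together with the uniform bound on $\Jacob_h^{-1},\Jacob^{-1}$ already proved. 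The trace bound \eqref{eq:trace} is the standard trace inequality in 2D applied componentwise, combined with Poincar\'e along $\Gamma_D$ where $\vu_h$ vanishes, and the two-dimensional embedding of $W^{1/2,2}(\Sigma)$ into every $L^p$ (recall $\xi_h=(\vu_h)_2|_{\Gamma_S}$).

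For the time-related pieces, \eqref{eq:new} is a direct consequence of \eqref{IM1}: $\PDt\delta_\eta^{k+1}=\delta_\xi^k-\Riesz(\PDt\eta^{k+1}-\pdt\eta^k)$, so by stability of $\Riesz$ and Lemma~\ref{lm_edt} applied to $\pd_x$ of the residual, one obtains the $\TS\|\pd_t^2\pd_x\eta\|_{L^2L^2}$ correction. The bounds \eqref{est5} and \eqref{est6} are then just $e=\delta+I$ with the interpolation errors from Theorem~\ref{thm:projection-velocity} and \eqref{pro_eR}; \eqref{est6} additionally needs to compare $\hvw_h=(0,\PDt\eta_h\,\xrefy)$ to $\hvw=(0,\pdt\eta\,\xrefy)$, which creates the extra $\TS\|\pd_t^2\pd_x\eta\|$ term through the same argument as for \eqref{eq:new}.

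The main obstacle is \eqref{est7}. The plan is to telescope the difference with intermediate states, namely
\[
\hvv_h^{k-1}\eta_h^k-\Jacob_h^k(\Jacob^k)^{-1}\hvv^k\eta^k
= (\hvv_h^{k-1}-\hvv^k)\eta_h^k+\hvv^k(\eta_h^k-\eta^k)+\bigl(\I-\Jacob_h^k(\Jacob^k)^{-1}\bigr)\hvv^k\eta^k,
\]
estimate the first term by \eqref{est5}--\eqref{est6} plus the triangle $\hvv_h^{k-1}-\hvv^k=(\hvv_h^{k-1}-\hvv^{k-1})+(\hvv^{k-1}-\hvv^k)$ so that the time-derivative piece is absorbed into a $\TS\|\pdt\hvu\|_{L^2L^p}^2$ contribution, the second by \eqref{est2}, and the third by $\Jacob_h-\Jacob=(\Jacob_h-\Jacob^k)$ combined with \eqref{est42} and the uniform bound on $\Jacob^{-1}$. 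Because the estimate is squared, H\"older and Young's inequality in the form $ab\leq\alpha a^2+\frac{1}{\alpha}b^2$ will be applied to isolate $\alpha\intOref{|\Grad\delta_\vu^k|^2\eta_h^k}$ on the right-hand side, which is the precise form the subsequent convergence proof needs to absorb into the dissipation; the remaining terms will aggregate into $\norm{\delta_\xi}_{L^2L^p}$, $\norm{\delta_\eta}_{L^2L^p}$, $\norm{\pd_x\delta_\eta}_{L^2L^p}$ together with $\TS^2$ and $h^2$ constants, with the constants $c_1,\dots,c_4$ exactly as in \eqref{cis}. The delicate point throughout is that the relative velocity $\hvv$ lives on the reference grid via the Piola transformation, so each intermediate error must be measured after composition with $\ALEh^{-1}$ or $\ALE^{-1}$, and using the uniform bi-Lipschitz bounds on these mappings (a consequence of \eqref{est1} and the lower bound on $\eta$) to pass freely between $L^p$ norms on $\Oref$ and on the moving domains.
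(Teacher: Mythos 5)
Your proposal is correct and follows essentially the same route as the paper: triangle-inequality splittings $e=\delta+I$ with the Riesz/Fortin interpolation errors, the one-dimensional (discrete) Sobolev embeddings for \eqref{est2}--\eqref{est3}, entry-wise matrix estimates for \eqref{est41}--\eqref{est42}, the identity \eqref{IM1} plus Lemma~\ref{lm_edt} for \eqref{eq:new}, and the identical three-term decomposition of $\hvv_h^{k-1}\eta_h^k-\Jacob_h^k(\Jacob^k)^{-1}\hvv^k\eta^k$ followed by Ladyzhenskaya-type interpolation and Young's inequality for \eqref{est7}. The only (harmless) deviation is your use of the resolvent identity $\Jacob_h^{-1}-\Jacob^{-1}=\Jacob_h^{-1}(\Jacob-\Jacob_h)\Jacob^{-1}$ for \eqref{est43}, where the paper instead computes the entries of the difference explicitly; both yield the stated bound.
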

\begin{proof}
Here we shall frequently recall the estimates \eqref{ests}. 
First, \eqref{est1} is obvious as $\eta_h$ and $\eta$ are bounded from above and below by positive constants, as well as $\pdx \eta$ and $\pdx \eta_h$ are bounded from above.  

By the triangular inequality and the Sobolev inequality we get \eqref{est2}
\begin{align*}
& \norm{\eta_h - \eta}_{L^\gamma L^\infty} 
 \leq \norm{\delta_\eta}_{L^\gamma L^\infty } 
+  \norm{I_\eta}_{L^\gamma L^\infty }
\aleq \norm{\pdx \delta_\eta}_{L^\gamma L^2 }  + h \norm{\pdx \eta}_{L^\gamma L^\infty } 
\aleq \norm{\pdx \delta_\eta}_{L^\gamma L^2 }  + h \norm{\pdx^2 \eta}_{L^\gamma L^2 } 
.
\end{align*}
Analogously, we have \eqref{est3}
\begin{align*}
 &\norm{\pdx \eta_h - \pdx \eta}_{L^2 L^\infty } 
  \leq
  \norm{\pdx \delta_\eta}_{L^2 L^\infty } 
+  \norm{\pdx I_\eta}_{L^2 L^\infty } 
 \aleq \norm{\Laph  \delta_\eta}_{L^2 L^2 } 
+ h \norm{\Lapx \eta}_{L^2 L^\infty } 
\\& =\norm{ \delta_\zeta}_{L^2 L^2 } 
+ h \norm{\Lapx \eta}_{L^2 L^\infty } 
\aleq \norm{ \delta_\zeta}_{L^2 L^2 } 
+ h \norm{\pdx^3 \eta}_{L^2 L^2 }  .
\end{align*}
Recalling the definition of $\M$ and $\Mh$, using triangular inequality and the estimates \eqref{est2} and \eqref{est3} we obtain \eqref{est41}
\begin{align*}
& \norm{\Mh - \M}_{L^2 L^\infty}   
= \norm{ \left(\begin{array}{cc} e_\eta  & -\xrefy \pdx e_\eta \\ 0 & 0 \end{array} \right) }_{L^2 L^\infty}   
\\& \aleq  \norm{\pdx \delta_\eta}_{L^2 L^2} +  \norm{ \delta_\zeta}_{L^2 L^2} + h\left(\norm{\pdx^2 \eta}_{L^2 L^2} + \norm{\pdx^3 \eta}_{L^2 L^2} \right) 
 .
\end{align*}
Analogously, we get \eqref{est42}
\begin{align*}
& \norm{\Jacob_h - \Jacob}_{L^2L^\infty}   
= \norm{ \left(\begin{array}{cc}  0 & 0 \\ -\xrefy \pdx e_\eta  & e_\eta \end{array} \right) }_{L^2L^\infty}   
\\&  \aleq \norm{\pdx \delta_\eta}_{L^2 L^2} +  \norm{ \delta_\zeta}_{L^2 L^2} + h\left(\norm{\pdx^2 \eta}_{L^2 L^2} + \norm{\pdx^3 \eta}_{L^2 L^2} \right) 
,
\end{align*}
and \eqref{est43}
\begin{align*}
& \norm{\Jacob_h^{-1} - \Jacob^{-1}}_{L^2 L^\infty}   
= \norm{ \left(\begin{array}{cc}  0 & 0 \\ -\xrefy \frac{\pdx \eta_h}{\eta_h} +\xrefy \frac{\pdx \eta}{\eta}  & \frac{1}{\eta_h} - \frac{1}{\eta} \end{array} \right) }_{L^2 L^\infty}   
\\& \leq  
\norm{ \frac{\eta_h \pdx \eta - \eta \pdx \eta_h}{\eta \eta_h} }_{L^2 L^\infty} + \norm{\frac{\eta  - \eta_h }{\eta \eta_h} }_{L^2 L^\infty} 
 =   
  \norm{\frac{e_\eta \pdx \eta + \eta \pdx e_\eta}{\eta \eta_h}}_{L^2 L^\infty} + \norm{\frac{e_\eta }{\eta \eta_h}}_{L^2 L^\infty} 
\\& \aleq 
 \frac{1 + \norm{\pdx \eta}_{L^\infty L^\infty}  }{\underline{\eta}^2}   \norm{  e_\eta}_{L^2 L^\infty} 
+ \frac{1 }{\underline{\eta}} \norm{\pdx e_\eta}_{L^2 L^\infty} 
\\& \aleq 
\frac{1 + \norm{\pdx \eta}_{L^\infty L^\infty}  }{\underline{\eta}^2} \left( \norm{\pdx \delta_\eta}_{L^2 L^2 }  + h \norm{\pdx \eta}_{L^2 L^\infty }  \right)
+ \frac{1 }{\underline{\eta}}\left( \norm{\delta_\zeta}_{L^2 L^2 }  + h \norm{\Lapx \eta}_{L^2 L^\infty }  \right)
\\& \aleq  
  \frac{1 + \norm{\pdx \eta}_{L^\infty L^\infty}  }{\underline{\eta}^2}  \norm{\pdx \delta_\eta}_{L^2 L^2 } 
 + \frac{1 }{\underline{\eta}} \norm{\delta_\zeta}_{L^2 L^2 }  
\\& \quad + h \left( \frac{1 + \norm{\pdx \eta}_{L^\infty L^\infty}  }{\underline{\eta}^2}   \norm{\pdx \eta}_{L^2 L^\infty } 
+ \frac{1 }{\underline{\eta}}  \norm{\Lapx \eta}_{L^2 L^\infty }  \right).
\end{align*}
The estimate \eqref{eq:trace} is a consequence of the trace estimate and Sobolev embedding in 1-D. 
{
For \eqref{eq:new} we first recall \eqref{IM1} and the triangular inequality to get 
\begin{align*}
\norm{D_t\delta_\eta^{k+1}}_{L^p} \leq 
\norm{ \delta_\xi^k}_{L^p} +
\norm{\Riesz (\PDt \eta^{k+1} - \pdt \eta^k)}_{L^p}.
\end{align*}
Then the estimate follows from  the continuity of $\Riesz$ and Lemma~\ref{lm_edt} as 
\[
\norm{\Riesz (\PDt \eta^{k+1} - \pdt \eta^k)}_{L^2  L^p}\aleq \norm{\pdx (\PDt \eta^{k+1} - \pdt \eta^k)}_{L^2  L^2}\aleq \TS \norm{\pdtt \pdx \eta}_{L^2 L^2}.
\]
}
The proof of \eqref{est5} is by Sobolev embedding and the interpolation estimate,
\begin{align*}
& \norm{\hvu_h -\hvu}_{L^2L^p}
\leq \norm{\delta_\vu }_{L^2L^p} + \norm{I_\vu }_{L^2L^p} 
\aleq \norm{\delta_\vu }_{L^2L^p} + \norm{\nabla I_\vu }_{L^2L^2} \aleq
\norm{\delta_\vu }_{L^2L^p} + h \norm{\hvu }_{L^2W^{2,2}} .
\end{align*}
Recalling the definition of $\hvw_h$ and $\hvw$, using again Sobolev embedding, {Lemma \ref{lm_edt}, the estimate \eqref{eq:new} and the interpolation inequality} we get \eqref{est6}:
\begin{align*}
& \norm{\hvw_h -\hvw}_{L^2L^p}
\aleq \norm{\PDt \eta_h - \pdt \eta }_{L^2L^p} 
\\& \leq \norm{\PDt (\eta_h -  \eta) }_{L^2L^p} 
+\norm{\PDt \eta - \pdt \eta }_{L^2L^p} 
\\& \aleq \norm{\PDt \delta_\eta }_{L^2L^p} 
+ \norm{\PDt I_\eta }_{L^2L^p} 
 +\norm{\PDt \pdx\eta - \pdt \pdx\eta }_{L^2L^2} 
\\& \aleq \norm{\PDt \delta_\eta }_{L^2L^p} 
 + \norm{\nabla \PDt I_\eta }_{L^2L^2} 
 + \TS \norm{\pdt^2 \pdx \eta }_{L^2 L^2}  
 \\& \aleq \norm{\delta_\xi }_{L^2L^p} 
 + h \norm{\eta }_{L^2W^{2,2}} + \TS \norm{\pdt^2 \pdx \eta }_{L^2L^2}.
\end{align*}

By the triangular inequality, \eqref{est2}, \eqref{est5}, and \eqref{est6} we get \eqref{est7}
\begin{align*}
& \vrf \TS \summ \bigg(\intOref{\abs{\hvv_h^{k-1} \eta_h^k  -  \Jacob_h^k  (\Jacob^k) ^{-1} \hvv^k  \eta^k}^p } 
\bigg)^\frac{2}{p}
\\& = 
\vrf \TS \summ \bigg(\intOref{\abs{(\hvv_h^{k-1} -\hvv^k)\eta_h^k  + \hvv^k (\eta_h^k -\eta^k) + (\I-  \Jacob_h^k  (\Jacob^k) ^{-1}) \hvv^k  \eta^k}^p }\bigg)^\frac{2}{p}
\\& \leq  
 \vrf \TS \summ \bigg(\intOref{\abs{-\TS\PDt \hvu_h^k + e_\vu^k + \hvw_h^k - \hvw^k}^p \eta_h^k }\bigg)^\frac{2}{p}
 +\vrf \TS \summ \bigg(\intOref{\abs{ \hvv^k (\eta_h^k -\eta^k)}^p/\eta_h^k }\bigg)^\frac{2}{p}
\\& \quad   
 +\vrf \TS \summ \bigg(\intOref{\abs{(\I-  \Jacob_h^k  (\Jacob^k) ^{-1}) \hvv^k  \eta^k}^p/\eta_h^k }\bigg)^\frac{2}{p}
\\& \aleq 
 \TS \summ \bigg(\intOref{\vrf \abs{\delta_\vu^k}^p \eta_h^k }\bigg)^\frac{2}{p}
 +  \norm{\delta_\xi}_{L^2L^p}^2
+   c_1 \norm{\delta_\eta}_{L^2 L^p }^2 
+ c_2 \norm{\pdx \delta_\eta}_{L^2L^p}^2  
+ c_3 \TS^2   +c_4 h^2 ,
\end{align*}
where $c_i, i=1,2,3,4$ are given above in \eqref{cis}, interpolation implies for $\frac{2}{p}=\theta$, $p<\infty$ that
\[
\bigg(\intOref{\vrf \abs{\delta_\vu^k}^p \eta_h^k }\bigg)^\frac{2}{p}\aleq \norm{\nabla \delta_\vu^k}_{L^2}^{(1-\theta)2}\norm{ \delta_\vu^k}_{L^2}^{2\theta } \aleq \alpha \norm{\nabla \delta_\vu^k}_{L^2}^{2}+\frac{1}{\alpha}\norm{ \delta_\vu^k}_{L^2}^{2}, 
\]
which closes the proof.
%
\end{proof}

\subsection{Proof of estimates \eqref{res}}\label{app_res}

\begin{proof}
First, by Young's inequality, the interpolation error, Theorem~\ref{thm:projection-velocity}, and the uniform bounds \eqref{ests}, we find
\begin{equation}\label{REIGF}
\begin{aligned}
\Abs{G_f}  =& 
\Bigabs{\TS \summ  \intOref{  \vrf \big(\eta_h^k \PDt I_\vu^k  + \frac12 \PDt \eta_h^k I_\vu^{k*} \big) \cdot \delta_\vu^k  }  
\\&  + 2 \mu \TS \summ   \intOref{   \big( \Grad I_\vu^k  (\Jacob_h^k)^{-1} \big)^\rmS: (\Grad \delta_\vu^k (\Jacob_h^k)^{-1} ) \eta_h^k } } ,
\\ \aleq  &
    \summ \TS\intOref{ \vrf \eta_h^k  |\delta_{\vu}^k|^2  } + 2 \alpha  \mu \summ \TS\intOref{ \abs{ \Grad \delta_\vu^k (\Jacob_h^k)^{-1} }^2 \eta_h^k  } +  c h^2 
\end{aligned}
\end{equation}
for any fixed $\alpha>0$, where
\begin{align*}
c&=  \frac14 \vrf  \left( \Ov{\eta} \norm{ \pdt \Grad \hvu}_{L^2L^2}^2 + \frac{1}{4 \underline{\eta}} \norm{ \xi_h}_{L^2L^\infty}^2  \norm{ \Grad \hvu}_{L^\infty L^2}^2 \right)
 + \frac{2 \mu  \Ov{\eta}}{4\alpha}\norm{\Jacob_h}_{L^\infty L^\infty} \norm{\hvu}_{L^2W^{2,2}}.
\end{align*}
Note that this constant is indeed bounded by the stability of the discrete solution, as 
$
\norm{ \xi_h}_{L^2L^\infty}^2
$
can be bounded by \eqref{eq:trace}. 
Second, by Young's inequality, the time discretization error \eqref{edts2}, we can control $G_s$ in the following way. 
\begin{equation}\label{REIGS}
\begin{aligned}
 \Abs{ G_s} =& 
\Bigabs{ \gamma_1 \TS \summ \intS{\pdx \delta_\eta^{k+1} \pdx (\PDt \eta^{k+1} - \pdt\eta^k )} -
\gamma_2 \TS \summ\intS{ \delta_\zeta^{k+1} \Lapx (\PDt \eta^{k+1} - \pdt\eta^k )}
\\&  + \TS \summ  \intS{\vrs \PDt I_\xi  \delta_\xi^k} }
\\ \aleq &
   \summ \TS \intSB{\gamma_1\abs{\pdx \delta_\eta^{k+1}}^2 + \gamma_2\abs{\delta_\zeta^{k+1}}^2 + \vrs \abs{\delta_\xi^k}^2  } 
+ \frac14 \vrs \norm{\pdt I_\xi}_{L^2L^2}^2 
\\& 
+\frac14 \summ \TS \intSB{ \gamma_1|\PDt \pdx \eta^{k+1} - \pdt\pdx\eta^k|^2 + \gamma_2|\PDt \Lapx \eta^{k+1} - \pdt\Lapx\eta^k|^2 } 
\\ \aleq &  
\summ \TS \intSB{\gamma_1\abs{\pdx \delta_\eta^{k+1}}^2 + \gamma_2\abs{\delta_\zeta^{k+1}}^2 + \vrs \abs{\delta_\xi^k}^2  }  + c_1 \TS^2 + c_2 h^2 ,
\end{aligned}
\end{equation}
where 
\begin{align*}
c_1 & =  \frac14 \gamma_1
\norm{\pd_t^2 \pdx \eta}_{L^2((0,T)\times\Sigma)}^2  + \frac14 \gamma_2 \norm{\pd_t^2 \Lapx \eta}_{L^2((0,T)\times\Sigma)}^2,
\\ c_2 &= \frac14 \vrs \norm{\pdt \pdx \xi}_{L^2((0,T)\times\Sigma)}^2= \frac14 \vrs \norm{\pdt^2 \pdx \eta }_{L^2((0,T)\times\Sigma)}^2. 
\end{align*}
Next, we analyze the $R_i$ terms.
\paragraph{$R^k_1$-term}
\begin{align*}
& \Abs{\TS \summ R^k_1} = \Abs{ \TS \summ \vrf \intOref{\big( e_\eta^k \pdt \hvu^k+ \eta_h^k (\PDt \hvu^k -\pdt\hvu^k ) \big) \cdot \delta_\vu^k} }
\\& \leq
\TS \summ \vrf \intOref{ \eta_h^k |\delta_\vu^k|^2}
+ \frac12 \TS \summ \vrf \intOref{ |e_\eta^k|^2/\eta_h^k |\pdt \hvu^k|^2 } 
\\& \quad 	+ \frac12 \TS \summ \vrf \intOref{ \eta_h^k |\PDt \hvu^k -\pdt\hvu^k |^2 }
\\& \leq 
\TS \summ \vrf \intOref{ \eta_h^k |\delta_\vu^k|^2}
+ c_1 \TS \summ \intS{ |\pdx \delta_\eta^k|^2 }  
+ c_2 h^2 + c_3  \TS^2,
\end{align*}
where we have used \eqref{est2} and the constants read
\begin{align*}
& c_1 =  \frac{\vrf}{\underline{\eta}}\norm{ \pdt \hvu}_{L^\infty(0,T;L^2(\Oref;\R^d))}^2, 
\\& c_2= \frac{\vrf}{\underline{\eta}} \norm{\pdx \eta}_{L^2(0,T;L^\infty(\Sigma))}^2\norm{ \pdt \hvu}_{L^\infty(0,T;L^2(\Oref;\R^d))}^2, 
\\&c_3 = \frac{\vrf \Ov{\eta}}{2}  \norm{\pdt^2 \hvu}_{L^2((0,T)\times\Oref;\R^d)}^2 .
\end{align*}

\paragraph{$R^k_2$-term}
 \begin{align*}
& \Abs{ \TS \summ R^k_2} =
\Abs{  \frac12\vrf \intOref{\left( (e_\xi^{k-1} - \TS \PDt \xi^k )\hvu^{k*}- \TS \pdt \eta^k \PDt \hvu^k \right) \cdot \delta_\vu^k}
}
\\& \leq  \TS \summ \vrf \intOref{ \eta_h^k |\delta_\vu^k|^2}
	+  \TS^2  \frac{\vrf}{2 \underline{\eta}} \norm{\pd_t \hvu }_{L^\infty(0,T;L^2(\Oref;\R^d))}^2 \norm{\pdt \eta}_{L^2(0,T; L^\infty(\Sigma))}^2
\\& \quad 
+  \frac{\vrf}{ \underline{\eta}} \left(\TS^2  \norm{\pd_t \xi }_{L^2((0,T)\times\Sigma)}^2
 + \TS \summ \intS{|\delta_\xi^{k-1} + I_\xi^{k-1}|^2} 
 \right)  \norm{\hvu}_{L^\infty((0,T)\times\Oref;\R^d)}	
\\& \aleq 
\TS \summ \vrf \intOref{ \eta_h^k |\delta_\vu^k|^2}
+  \TS \summ \intS{|\delta_\xi^{k}|^2} 
+ c_1 \TS^2 + c_2 h^2 ,
 \end{align*}
 where 
\begin{align*}
& c_1 = \frac{\vrf}{2 \underline{\eta}} \norm{\pd_t \hvu }_{L^\infty(0,T;L^2(\Oref;\R^d))}^2 \norm{\pdt \eta}_{L^2(0,T; L^\infty(\Sigma))}^2
+ \frac{\vrf}{ \underline{\eta}}   \norm{\pdt^2 \eta }_{L^2((0,T)\times\Sigma)}^2\norm{\hvu}_{L^\infty((0,T)\times\Oref;\R^d)}	,
\\& 
c_2=  \frac{\vrf}{ \underline{\eta}}   \norm{\pdt \pdx \eta }_{L^2((0,T)\times\Sigma)}   \norm{\hvu}_{L^\infty((0,T)\times\Oref;\R^d)} .	
\end{align*}
\paragraph{$R^k_3$-term} For this term we recall Ladyzenskaja's estimate in 2D
\begin{align}
\label{eq:lady}
\norm{f}_{L^4}^2\leq \norm{\nabla f}_{L^2}\norm{f}_{L^2}.
\end{align}
We use it to find by the previous arguments that
\begin{align*}
&\biggabs{\TS \summ R^k_3 }
\leq \biggabs{\TS \summ  
\frac12 \vrf \intOref{ \Big(  \delta_\vu^k \cdot   (\Grad I_\vu^k ) 
-I_\vu^k   \cdot   (\Grad  \delta_\vu^k) \Big)
 \cdot (\Jacob_h^k)^{-1} \hvv_h^{k-1} \eta_h^k } 
\\& \quad 
+ \TS \summ  \frac12 \vrf \intOref{ \Big(  \delta_\vu^k \cdot (\Grad \hvu^k) - \hvu^k \cdot (\Grad  \delta_\vu^k) \Big) \cdot (\Jacob_h^k)^{-1} \left(   \hvv_h^{k-1} \eta_h^k  - \Jacob_h^k (\Jacob^k)^{-1} \hvv^{k}   \eta^k \right)  } 
}
\\& \aleq 
\TS \summ \intOref{\abs{\delta_\vu^k}\abs{\Grad I_\vu^k }\abs{\hvv_h^{k-1}} + \abs{\delta_\vu^k}\abs{\Grad \vu^k }\abs{\hvv_h^{k-1} \eta_h^k  - \Jacob_h^k (\Jacob^k)^{-1} \hvv^{k} }}
\\
&\quad 
+\TS \summ \intOref{ \abs{\Grad \delta_\vu^k}\abs{I_\vu^k }\abs{\hvv_h^{k-1}}+ \abs{\Grad\delta_\vu^k}\abs{\vu^k }\abs{\hvv_h^{k-1} \eta_h^k  - \Jacob_h^k (\Jacob^k)^{-1} \hvv^{k} }} 
\\
&\aleq \TS \summ  \norm{\hvv_h^{k-1}}_{L^{4}}\Big(\norm{\delta_\vu^k}_{L^{4}}\norm{\Grad I_\vu^k}_{L^2} + \norm{\nabla\delta_\vu^k}_{L^{2}}\norm{I_\vu^k}_{L^{4}}\Big) 
\\
&+  \TS \summ \norm{\hvv_h^{k-1} \eta_h^k  - \Jacob_h^k (\Jacob^k)^{-1} \hvv^{k}}_{L^{4}}\Big(\norm{\delta_\vu^k}_{L^{2}}\norm{\nabla\vu^k}_{L^4} + \norm{\nabla\delta_\vu^k}_{L^{2}}\norm{\vu^k}_{L^{4}}\Big)
\\
&\aleq  \frac{ c_5 h^2  \TS}{\alpha}\summ \norm{\hvv_h^k}_{W^{1,2}}^2 +\alpha \TS\summ \norm{\nabla \delta_\vu^k}_{L^2}^2
\\
&+  \TS \summ \norm{\hvv_h^{k-1} \eta_h^k  - \Jacob_h^k (\Jacob^k)^{-1} \hvv^{k}}_{L^{4}}\Big(  \norm{\delta_\vu^k}_{L^{2}}\norm{\vu^k}_{W^{2,2}}+\norm{\nabla \delta_\vu^k}_{L^{2}}\Big),
\end{align*}
with
\begin{align*}
c_5=\norm{\nabla^2\vu}_{L^2 (L^2)}^2\sup_{k}\norm{\hvv^{k}}_{L^2}^2.
\end{align*}
Now \eqref{est7} and Young's inequality implies
that
\begin{align*}
&\biggabs{\TS \summ R^k_3 }\aleq \alpha\TS\summ \norm{\nabla \delta_\vu^k}_{L^2}^2 +\frac{1}{\alpha} \intOref{ \abs{ \delta_\vu^k}^2 \eta_h^k } + \norm{ \delta_\xi}_{L^2L^4}^2
\\
&\quad 
+   c_1 \norm{\delta_\eta}_{L^2 L^4 }^2 
+ c_2 \norm{\pdx \delta_\eta}_{L^2L^4}^2  
+ c_3 \TS^2   + \tilde{c}_4 h^2 
\end{align*}
for $c_1$, $c_2$, $c_3$, $c_4$ given in \eqref{cis} with 
\[
\tilde{c}_4=c_4+\frac{ c_5 h^2  \TS}{\alpha}\summ \norm{\hvv_h^k}_{W^{1,2}}^2 +\norm{\vu}_{L^2W^{2,2}}^2.
\]
Finally by using  \eqref{eq:trace}  we estimate
\[
\norm{ \delta_\xi^k}_{L^4}^2\aleq \norm{ \delta_\xi^k}_{L^2}\norm{ \delta_\xi^k}_{L^\infty}\aleq \norm{ \delta_\xi^k}_{L^2}\norm{\nabla \delta_\vu^k}_{L^2}\aleq \alpha \norm{\nabla \delta_\vu^k}_{L^2}^2+\frac{1}{\alpha} \norm{ \delta_\xi^k}_{L^2}^2,
\]
With that and Sobolev embedding we conclude that
\begin{align*}
&\biggabs{\TS \summ R^k_3 }\aleq \alpha\TS\summ \norm{\nabla \delta_\vu^k}_{L^2}^2 +\frac{1}{\alpha} \intOref{ \abs{ \delta_\vu^k}^2 \eta_h^k} +\frac{1}{\alpha}  \norm{ \delta_\xi^k}_{L^2L^2}^2 
\\
&\quad 
+   c_1 \norm{\partial_{x_1}\delta_\eta}_{L^2 L^2 }^2 
+ c_2 \norm{\pdx^2 \delta_\eta}_{L^2L^2}^2  
+ c_3 \TS^2   + \tilde{c}_4 h^2 .
\end{align*}

%
%
\paragraph{$R^k_4$-term} 
By using the identity \eqref{ddiv2}, Young's inequality, H\"older's inequality, and the interpolation error \eqref{proe}, we obtain 
\begin{align*}
& 
\Abs{\TS \summ R^k_4 }
=\Abs{\TS \summ  \intOref{  e_p^k \Grad \delta_\vu^k : \Mhk } 
+ \intOref{ \hp^k \Grad \delta_\vu^k : \big(\Mhk -  \Mk\big)}}
\\& =
\Abs{ \TS \summ \intOrefB{ I_p^k \Grad \delta_\vu^k : \Mhk +  \hp^k \Grad \delta_\vu^k : \big(\Mhk -  \Mk\big)} } 
\\& \aleq  2 \alpha  \mu \TS \summ \intOref{\abs{\Grad \delta_\vu^k (\Jacob_h^k)^{-1}}^2\eta_h^k} + \frac{\Ov{\eta}}{4 \alpha  \mu }\norm{I_p}_{L^2((0,T)\times\Oref)}^2 +
\\&\quad + \frac{1}{4 \alpha  \mu \underline{\eta}} \norm{\hp}_{L^\infty L^\infty}^2 \norm{\Jacob_h}_{L^\infty L^\infty}^2  \norm{\Mh -  \M}_{L^2(0,T;L^\infty(\Oref))}^2
\\& \aleq  2 \alpha  \mu \TS \summ \intOref{\abs{\Grad \delta_\vu^k (\Jacob_h^k)^{-1}}^2\eta_h^k}  +
c_1\left( \norm{\pdx \delta_\eta}_{L^2((0,T)\times\Sigma) }^2 + \norm{\Lapx \delta_\eta}_{L^2((0,T)\times\Sigma) }^2 \right)
+ c_2 h^2 ,
\end{align*}
where we have used \eqref{est41} and the constants read
\begin{align*} 
c_1=& \frac{1}{4 \alpha  \mu \underline{\eta}} \norm{ \hp}_{L^\infty L^2}^2 \norm{\Jacob_h}_{L^\infty ((0,T)\times\Oref;\R^{d\times d})}^2,
\\
c_2=&\frac{1}{4 \alpha  \mu \underline{\eta}} 
\norm{\Grad \hp}_{L^2 L^2}^2 
+ c_1 \left(  \norm{\pdx \eta}_{L^2(0,T;L^\infty(\Sigma))}^2 + \norm{\Lapx \eta}_{L^2(0,T;L^\infty(\Sigma))}^2 \right)
\end{align*}
\begin{Remark}\label{pifmh}
Thanks to the nice interpolation operator which produces the divergence-free condition \eqref{P3} with the covariance $\Mh$ instead of $\M$. Otherwise, we would lose the equality \eqref{ddiv2} and has to estimate $\TS \summ  \intOref{  \delta_p^k \Grad \delta_\vu^k : \Mhk} $, in which the pressure error $\delta_p$ is not available in our setting.  
\end{Remark}

\paragraph{$R^k_5$-term} Applying Young's inequality, H\"older's inequality,    \eqref{est41}, and \eqref{est43}, we obtain 
\begin{align*}
& \Abs{ \TS \summ R^k_5 } 
=  \Abs{\TS \summ \intOrefB{ 
	\big( \Grad \hvu^k (\Jacob_h^k)^{-1} \big)^\rmS: \left(\Grad \delta_\vu^k (\Jacob_h^k)^{-1} \eta_h^k \right)
	- \big( \Grad \hvu^k (\Jacob^k)^{-1} \big)^\rmS: \left(\Grad \delta_\vu^k (\Jacob^k)^{-1} \eta^k \right)}  }
\\& \leq 
	\Abs{ \TS \summ \intOref{ 
	\Big( \Grad \hvu^k \big( (\Jacob_h^k)^{-1} - (\Jacob^k)^{-1} \big)\Big) ^\rmS: \left(\Grad \delta_\vu^k (\Jacob_h^k)^{-1} \eta_h^k \right)}}
\\& \quad	+ \Abs{ \TS \summ \intOref{ \big( \Grad \hvu^k (\Jacob^k)^{-1} \big)^\rmS: \left(\Grad \delta_\vu^k (\Jacob_h^k)^{-1}   (\eta_h^k - \Jacob_h^k (\Jacob^k)^{-1} \eta^k \right) }} 
\\& \aleq 
	\alpha  \TS \summ \intOref{\Abs{\left(\Grad \delta_\vu^k (\Jacob_h^k)^{-1} \right)^\rmS}^2\eta_h^k} 
	+	\frac{\Ov{\eta}}{2\alpha}\norm{\Grad \hvu}_{L^\infty L^2}^2 \norm{ (\Jacob_h)^{-1} - (\Jacob)^{-1} }_{L^2 L^\infty}^2
\\& \quad 
+	\frac{\Ov{\eta}}{2\alpha}\norm{\Grad \hvu}_{L^\infty L^2}^2 \norm{\Jacob^{-1}}_{L^\infty L^\infty}^2 \norm{\Jacob_h}_{L^\infty L^\infty}^2 \norm{  \Mh^\rmT - \M^\rmT  }_{L^2 L^\infty}^2
\\& \aleq 
\alpha  \TS \summ \intOref{\Abs{\left(\Grad \delta_\vu^k (\Jacob_h^k)^{-1} \right)^\rmS}^2\eta_h^k}   
	  +  c_1  \norm{\pdx \delta_\eta}_{L^2((0,T)\times\Sigma) }^2 + c_2 \norm{ \delta_\zeta}_{L^2((0,T)\times\Sigma) }^2 
+ c_3 h^2 
\end{align*}
where 
\begin{align*}
c_1 = &
\frac{\Ov{\eta}}{2\alpha}\norm{\Grad \hvu}_{L^\infty L^2}^2 
\left(  \norm{\Jacob^{-1}}_{L^\infty L^\infty}^2 \norm{\Jacob_h}_{L^\infty L^\infty}^2 
+ \frac{1 +\norm{\pdx \eta}_{L^\infty L^\infty} }{\underline{\eta}^2}   \right),
\\ c_2=& 
	\frac{\Ov{\eta}}{2\alpha}\norm{\Grad \hvu}_{L^\infty L^2}^2 
\left(  \norm{\Jacob^{-1}}_{L^\infty L^\infty}^2 \norm{\Jacob_h}_{L^\infty L^\infty}^2  + \frac{1}{\underline{\eta}} \right),
\\ c_3=& 
c_1 \norm{\pdx^2 \eta}_{L^2((0,T)\times\Sigma)}^2 + c_2 \norm{\pdx^3 \eta}_{L^2((0,T)\times\Sigma)}^2 .
\end{align*}

\paragraph{$R^k_6$-term} By Young's inequality and \eqref{edts1} we obtain
\begin{align*}
 \abs{\TS \summ R^k_6} & = \Abs{ \TS \summ \vrs\intS{ (\PDt \xi^k -\pdt \xi^k) \delta_\xi^k} }
  \aleq \frac{\TS^2}{4\vrs}  \norm{\pd_t^2 \xi }_{L^2((0,T)\times\Sigma)}^2 + \TS \summ \intS{\vrs \abs{\delta_\xi}^2 } .
\end{align*}
\paragraph{$R^k_7$-term.} 

\begin{align*}
& \Abs{ \TS \summ R^k_7 } =  \Abs{ \TS \summ  \gamma_1 \intS{\pdx (\eta^{k+1} -\eta^k) \pdx  \delta_\xi^k} 
+  \TS \summ  \gamma_2 \intS{\pdx (\zeta^{k+1} -\zeta^k) \pdx \delta_\xi^k} }
\\
&\aleq \TS \summ \TS \Big(\norm{\pdx^2D_t\eta^k}+\norm{\pdx^2D_t\zeta^k}_{L^2}\Big)\norm{\delta_\xi^k}_{L^2}
\\
&\aleq \TS \summ \norm{\delta_\xi^k}_{L^2}^2 + \TS^2(\norm{\pdx^4\xi}_{L^2 L^2}^2+\norm{\pdx^2\xi}_{L^2 L^2}^2)
\end{align*}
%
Consequently, collecting all the above estimates we get 
\begin{equation*}
\Abs{ \TS \summ \sum_{i=1}^7 R^k_i + G_f + G_s} 
\aleq \TS^2 + h^2  
+ c \TS \summ \delta_E^k 
+ 2 \alpha  \mu   \TS \summ \intOref{\left| \Grad \delta_\vu^k (\Jacob_h^k)^{-1}\right|^2 \eta_h^k}
,
\end{equation*}
which proves \eqref{res}.

\end{proof}

\section{Numerical Implementation}
In this appendix we provide details of the numerical implementations of semi-implicit Scheme-R \eqref{SKM_ref} and monolithic fully implicit \eqref{wf_ref} both computed on the reference domain $\Oref$ and both implemented using FEniCS finite element method \cite{Fenics}. Here, let us point out that, instead of implementing the height of the structure $\eta$, {we take a shift $\eta=\eta-1$ (independent of $\xrefy$) and then linearly extend it to the whole domain via $\eta = \eta \xrefy$. Moreover, the structure velocity $\xi$ on $\Gamma$ is directly replaced by the second component of the fluid velocity $\xi =u_2$. Further, instead of $\zeta$ we shall use $z$ as the second order derivative of the new $\eta$. Hereinafter, we shall frequently drop the superscript `` $\widehat{}$ " for simplicity of the notation.}

\subsection{Implementation of semi-implicit Scheme-R}\label{implement_semiimplicit}
We implemented Scheme-R \eqref{SKM_ref}, the monolithic method on the reference domain $\Oref$. The domain $\Oref$ is approximated by regular triangles $K\in\gridf$ with the typical mesh size $h$. The problem comprises four global unknowns: velocity $\vu$, pressure $p$, mesh displacement in $\xy-$direction $\eta$ and its second order derivative $z$. The velocity-pressure pair is approximated with the inf-sup compatible MINI element \cite{Arnold1984}, where the velocity is approximated by the piecewise linear continuous elements enlarged with the cubic bubbles, mesh displacement by the same elements as the velocity and the second order derivative of the mesh displacement is approximated by piecewise linear elements, for the definitions of the discrete function spaces see \eqref{discrete_spaces}.

For the time stepping we use a backward Euler method with a fixed time step $\TS$, we denote by $\vu^k, {\bf z}^k, p^k$ and $\eta^k$ the unknowns at the $k^{\rm th}$ time step, i.e. at time $t=k \TS$ and $\vu^0, {\bf z}^0, p^0, \eta^0$ are prescribed initial conditions (in our case equal to zero). 
{Since in case of zero initial conditions, it holds $\eta^1 = \eta^0$, we may shift the time index $k$ (resp. $k+1$) to $k-1$ (resp. $k$) for the structure variables.}  
This semi-implicit scheme is linear and the corresponding system of linear equations is solved with the direct solver MUMPS \cite{mumps}.
Components of the velocity $\vu$ are denoted by $(u_1, u_2)$.

Displacement $\eta^k$ is computed explicitly using the $y$-component of the velocity $u_2^k$ on the top boundary $\Gamma$, i.e.
$$
\eta^k=\eta^{k-1}+\TS\, u_2^k \quad\text{on}\ \Gamma.
$$
The following quantities are used in the discretized weak form. The deformation gradient $\Jacob$ is obtained from the displacement, $J$ is its determinant and $\dot{J}$ its time derivative. All are evaluated at the $(k-1)$st time level
$$
\Jacob^{k-1} = \mathbb{I} + \nabla (0, \eta^{k-1})^\rmT,\quad
J^{k-1} = \det(\Jacob^{k-1}),\quad
\dot{J}^{k-1}=D_tJ^{k-1}=\frac{J^{k-1}-J^{k-2}}{\TS}.
$$
Finally, $\vv$ is the relative velocity of the fluid and $\hbftau$ is the Cauchy stress tensor after the ALE transformation 
$$
\vv^{k-1} = \vu^{k-1} - (0, \xi^{k-1})^\rmT,\quad
\hbftau^{k}= -p^k\I + 2 \mu\left(\nabla\vu^k(\Jacob^{k-1})^{-1}\right)^\rmS,
$$
where $\xi^{k-1}=D_t \eta^{k-1}$ is the mesh velocity that is computed after Step 2 when the displacement $\eta$ is prolongated into the whole domain $\Oref$.

The whole simulation consists of two steps. In Step 1 we solve for velocity $\vu$, its Laplace ${\bf z}$ and pressure $p$, explicitly compute the value of $\eta$ on the top boundary $\Gamma$ and in Step 2 we linearly expand it to the whole domain $\Oref$.
\begin{description}
\item[Step 1] We solve for $\vu, {z}$ and $p$.
\begin{equation*}
\intOref{ J^{k-1}\tr\left(\nabla\vu^k\,\left(\Jacob^{k-1}\right)^{-1}\right) q } =0,
\end{equation*}
\begin{equation*}
\intOref{ \left(zb + \left(\partial_{x_1}\eta^{k-1}+\TS\, \partial_{x_1}u_2^k\right)\,\partial_{x_1}b\right) } =0,
\end{equation*}
\begin{equation*}
\begin{aligned}
&\rho_f \intOref{J^{k-1} D_t \vu^k\cdot\pmb{\varphi} }
+\frac12\rho_f\intOref{\dot{J}^{k-1}(2\vu^k-\vu^{k-1})\cdot\pmb{\varphi}}
+\frac12\rho_f\intOref{J^{k-1}\nabla\vu^k\left(\Jacob^{k-1}\right)^{-1}\vv^{k-1} \cdot\pmb{\varphi}}
\\
&-\frac12\rho_f\intOref{ J^{k-1}\nabla\pmb{\varphi}\left(\Jacob^{k-1}\right)^{-1}\vv^{k-1} \cdot\vu^k}
+\intOref{ J^{k-1}\tr\left(\hbftau^k\nabla\pmb{\varphi}(\Jacob^{k-1})^{-1})\right)}
\\
&+\int_{\Gamma}\rho_s D_t u_2^k \,\varphi_2\dSx+\gamma_1\int_{\Gamma}\left(\partial_{x_1}\eta^{k-1}+\TS\, \partial_{x_1}u_2^k\right)\,\partial_{x_1}\varphi_2\dSx
\\
&-\gamma_2\int_{\Gamma} \partial_{x_1}z\,\partial_{x_1}\varphi_2\dSx+\gamma_3\int_{\Gamma}\partial_{x_1}u_2\,\partial_{x_1}\varphi_2\dSx
+\int_{\Gamma}f{\varphi_2}\dSx=0,
\end{aligned}
\end{equation*}
where $\pmb{\varphi}=(\varphi_1,\varphi_2)$ is the test function corresponding to the velocity $\vu$ and $\varphi_1,\varphi_2$ its components, $q$ is the test function for the pressure $p$ and $b$ the test function for $z$. Finally, $f$ denotes the $y$-component of the force acting on the boundary $\Gamma$.

\item[Step 2]
We linearly prolongate the displacement $\eta$ to the whole domain $\Oref$ by solving
\begin{equation*}
\intOref{ \partial_{x_2} \eta^k \ \partial_{x_2} \psi }=0,
\end{equation*}
for all $\psi\in B$. Here, $\eta^k = \eta^{k-1} + \TS\, u_2^{k}$, where $u_2^k$ is obtained in Step 1.
\end{description}

\subsection{Implementation of fully implicit scheme}\label{implement_fullyimplicit}
In Section~\ref{sec:comparison_schemeR-fully_implicit} we compare our Scheme-R to the fully implicit method based on the weak form \eqref{wf_ref}. As in the case of implementation of Scheme-R, the domain $\Oref$ is approximated by regular triangles $T\in\mathcal{T}_h$ and the problem comprises four global unknowns: velocity $\vu$, pressure $p$, mesh displacement in $\xy-$direction $\eta$ and its second derivative $z$. The velocity-pressure pair $(\vu, p)$  is approximated with the MINI element, mesh displacement $\eta$ is from the same space as velocity, and the second derivative of the displacement $z$ is approximated by piecewise linear elements.

The time derivatives are approximated by the backward Euler time scheme, the nonlinearities are treated with the Newton solver, and the consequent set of linear equations by direct solver MUMPS. Knowing the solution $\vu^{k-1},\eta^k,z^k,p^k$ on the previous time level, we are solving the fully implicit nonlinear problem. Thus, we solve for $\vu^k, \eta^k, z^k$ and $p^k$ satisfying the continuity equation
\begin{align*}
\intOref{ J^k\tr\left(\nabla\vu^k\,(\Jacob^k)^{-1}\right) q } &=0,\\
\end{align*}
the coupled momentum equation
\begin{align*}
&\rho_f \intOref{ J^k D_t \vu^k\cdot\pmb{\varphi} }
+\frac12\rho_f\intOref{D_t{J}^k \,\vu^k\cdot\pmb{\varphi}}
+\frac12\rho_f\intOref{J^{k}\nabla\vu^k(\Jacob^k)^{-1}\vv^{k} \cdot\pmb{\varphi}}
\\
&-\frac12\rho_f\intOref{ J^{k}\nabla\pmb{\varphi}(\Jacob^k)^{-1}\vv^{k} \cdot\vu^k}
+\intOref{ J^{k}\tr\left(\hbftau^k\nabla\pmb{\varphi}(\Jacob^{k})^{-1})\right)}
\\
&+\int_{\Gamma}\rho_s D_t u_2^k \,\varphi_2\dSx+\gamma_1\int_{\Gamma} \partial_{x_1}\eta^{k} \,\partial_{x_1}\varphi_2\dSx
\\
&-\gamma_2\int_{\Gamma} \partial_{x_1}z\,\partial_{x_1}\varphi_2\dSx+\gamma_3\int_{\Gamma}\partial_{x_1}u_2\,\partial_{x_1}\varphi_2\dSx
+\int_{\Gamma}f{\varphi_2}\dSx=0.
\end{align*}
Further, the discrete Laplace equation for $z^k$ and the harmonic extension of $\eta^k$
\begin{align*}
\intOref{ \left(z^k b + \partial_{x_1}\eta^k\,\partial_{x_1}b\right) } &=0,
\\
\intOref{ \partial_{x_2} \eta^k \ \partial_{x_2} \psi }&=0,
\end{align*}
for all test functions $q, b, \pmb{\varphi}$ and $\psi$. Here, we used the same notion as above
\begin{align*}
\Jacob^{k} &= \mathbb{I} + \nabla(0, \eta^{k})^\rmT,\quad J^k=\det\Jacob^k,\quad
\vv^{k} = \vu^{k} - (0, D_t \eta^{k})^\rmT,\\
\hbftau^{k} &= -p^k\I + 2 \mu\left(\nabla\vu^k(\Jacob^{k})^{-1} \right)^\rmS
\end{align*}
and the components of the velocity $\vu$ are denoted by $(u_1,u_2)$.
The problem is periodic in $x_1$ direction, with a homogeneous Dirichlet boundary conditions for $\vu$ and $\eta$ on the bottom boundary, and  $D_t\eta^k = u_2^{k}$ on the top boundary.

\end{document}